\let\footnote=\endnote
\def\EMAIL#1{\href{mailto:#1}{#1}}
\def\blot{\quad \mbox{$\vcenter{ \vbox{ \hrule height.4pt
				\hbox{\vrule width.4pt height.9ex \kern.9ex \vrule width.4pt}
				\hrule height.4pt}}$}}
\begin{document}


\RUNAUTHOR{Huang et~al.}

\RUNTITLE{Propagation of Input Tail Uncertainty in Rare-Event Estimation}

\TITLE{Propagation of Input Tail Uncertainty in Rare-Event Estimation: A Light versus Heavy Tail Dichotomy}

\ARTICLEAUTHORS{%
\AUTHOR{Zhiyuan Huang}
\AFF{Department of Management Science and Engineering, Tongji University, \EMAIL{huangzy@tongji.edu.cn}
}
\AUTHOR{Henry Lam}
\AFF{Department of Industrial Engineering and Operations Research, Columbia University, \EMAIL{henry.lam@columbia.edu}
}
\AUTHOR{Zhenyuan Liu}
\AFF{Department of Industrial Engineering and Operations Research, Columbia University, \EMAIL{zl2817@columbia.edu}
}
} 



\ABSTRACT{%
We consider the estimation of small probabilities or other risk quantities associated with rare but catastrophic events. In the model-based literature, much of the focus has been devoted to efficient Monte Carlo computation or analytical approximation assuming the model is accurately specified. In this paper, we study a distinct direction on the propagation of model uncertainty and how it impacts the reliability of rare-event estimates. Specifically, we consider the basic setup of the exceedance of i.i.d. sum, and investigate how the lack of tail information of each input summand can affect the output probability. We argue that heavy-tailed problems are much more vulnerable to input uncertainty than light-tailed problems, reasoned through their large deviations behaviors and numerical evidence. We also investigate some approaches to quantify model errors in this problem using a combination of the bootstrap and extreme value theory, showing some positive outcomes but also uncovering some statistical challenges.

}%





\KEYWORDS{uncertainty propagation, model uncertainty, rare-event estimation, large deviations}


\maketitle

\section{Introduction.}
\label{sec:intro}

Estimating small probabilities and risk quantities associated with rare but catastrophic events is ubiquitous in risk analysis and management. Examples include the prediction of large asset losses in finance \cite{Glasserman04,chan2010efficient}, cash flow imbalances in insurance \cite{ASM00Ruin,asmussen2006improved}, system overloads in service operations \cite{kroese1999efficient,blanchet2009rare,blanchet2014rare,dupuis2009importance} and breakdowns in communication systems \cite{Heidelberger95,RT09,nicola1993fast,nicola2001techniques}. In model-based approaches, namely when a model is built to capture the internal dynamics of the system, much of the focus in the literature has been devoted to efficient Monte Carlo computation or analytical approximation, assuming the model is accurately specified (e.g., \cite{bucklew2013introduction,blanchet2012state,juneja2006rare}). However, in virtually all cases, the target rare-event quantity computed from the model is affected by input misspecifications, where any inaccuracies in calibrating the model can be propagated to the outputs and make the target estimate erroneous or even meaningless.



The study of the impacts from input misspecifications has gathered growing attention in recent years, generally known as the problem of model uncertainty or input uncertainty. Its main focus is to develop methodologies that can quantify the errors in the output estimation or decision attributed to model misspecifications, measured in terms of bounds or variance. See, e.g., \cite{barton2002panel,henderson2003input,chick2006bayesian,barton2012input,song2014advanced,lam2016advanced,corlu2020stochastic,barton2022input} in the stochastic simulation literature, and \cite{petersen2000minimax,hansen2008robustness,glasserman2014robust,lim2007relative} in finance, economics, control and operations management applications. In extremal estimation, this problem is intimately related to extreme value theory, in which one attempts to extrapolate the tail beyond the scope of data in a  statistically justified fashion, along with uncertainty quantification \cite{embrechts2013modelling,embrechts2005quantitative,resnick2008extreme,haan2006extreme}. Recently, the framework of so-called distributionally robust optimization \cite{delage2010distributionally,wiesemann2014distributionally,ben2013robust,ghaoui2003worst} has been studied to construct bounds on extremal measures with additional robustness properties beyond statistical asymptotics. This approach utilizes postulations such as the acknowledgement of the true distribution within a neighborhood of a baseline model \cite{atar2015robust,blanchet2020distributionally,engelke2017robust}, marginal information and extremal coefficients \cite{wang2013bounds,bernard2017risk,embrechts2006bounds,puccetti2013sharp,wang2011complete,dhara2021worst,puccetti2012computation,embrechts2013model,yuen2020distributionally,lam2018sensitivity}, moments and shape assumptions on the tail such as monotonicity or convexity \cite{lam2017tail,van2019distributionally,li2019ambiguous,li2018worst,roos2022tight,bai2023distributionally}. In simulation-based rare-event analysis, \cite{nakayama1995asymptotics,nakayama1998derivative} studied methods to efficiently compute sensitivities of rare-event probabilities with respect to model parameters, and \cite{nelson2021reducing} proposed an averaging of distributions to fit input models to enhance tail performances. 

In contrast to the past literature above that focused on techniques to quantify the impact of model uncertainty, in this paper we aim to understand the significance of the impact itself in relation to the level of input information and propagation mechanics in rare-event problems. Specifically, we consider the basic problem of the large exceedance probability of an i.i.d. sum, where the input model refers to the probability distribution governing each summand. Here, suppose we have input data that informs the summand's distribution. Our first question would be: 
\\

\begin{enumerate}
\item[1.] \emph{By simply using the empirical distribution as my input model fit, would the rare-event probability be reasonably close to the truth (assuming computational or Monte Carlo error is negligible)?}
\\
\end{enumerate}


To address Question 1, our viewpoint is that the main uncertainty in the input model that undermines the accuracy of rare-event estimation is the lack of knowledge of its tail. The central, non-tail part of the input distribution can typically be fit by parametric or nonparametric techniques, where there are adequate data to perform such fit (and Question 1 above simply uses the empirical distribution). In contrast, the tail portion is often ill-informed due to inadequate data, yet it can have a high impact on the aggregated tail behavior. 
Thus, to address Question 1, we first focus on:
\\

\begin{enumerate}
\item[0.] \emph{How does truncating the tail of the input model affect the rare-event estimate?}
\\
\end{enumerate}

Our main contention is that heavy-tailed problems suffers from a much larger impact from input model truncation than light-tailed counterparts. This truncation level represents the amount of tail knowledge obtained from the data, such as the maximum of all data or remaining data after we discard a fixed number of largest data points. As a consequence, using the empirical distribution with 
even a moderate data size, as asked in Question 1, fails to estimate a heavy-tailed rare-event quantity reliably. On the other hand, the effect of missing tails on light-tailed estimation is relatively milder and hence reliable estimation in this case is achievable with much less data requirement. 

The above different effects from truncating the input tail can be explained from the different large deviations behaviors in heavy- versus light-tailed systems. Specifically, the former pertains to the one or several ``big jumps" \cite{embrechts2013modelling,denisov2008large,rhee2019sample,nair2022fundamentals,blanchet2008efficient}, i.e., to invoke a rare event, one or several input components exhibit huge values, while the latter invokes rare events by having each component contribute a small shift and adding these contributions \cite{dembo2009large,bucklew2004introduction}. Thus, to accurately estimate a heavy-tailed rare event, one needs to accurately estimate the far tail of each input component, whereas this is not necessary in light-tailed systems. The mathematical analysis to rigorize these intuitions relies on Berry-Ess\'{e}en expansions and exact large deviations asymptotics, which allow us to compare the rare-event probabilities under the true and the truncated distributions. In particular, these expansions and asymptotics are derived for triangular arrays with growing truncation levels, which requires elaborate analyses and are new to our best knowledge. 

 

With the above, we go further to ask whether we can statistically assess the error arising from input uncertainty, specifically:
\\

\begin{enumerate}
\item[2.] \emph{With the point estimate in Question 1, could the bootstrap give rise to valid confidence intervals that account for the input error?}
\item[3.] \emph{Would incorporating extreme value theory in fitting the input tail lead to more reliable uncertainty quantification?}
\\
\end{enumerate}

To address Questions 2 and 3, we ran extensive experiments to evaluate the performances of bootstrap and extreme-value-theory-enhanced bootstrap techniques across various scenarios involving different tail distributions, sample sizes, and target probabilities. We find that, particularly in situations exhibiting heavy-tailed behaviors, even when the sample size is relatively large, i.e., approaching the inverse of the target probability, using the empirical bootstrap in Question 2 that significantly ignores the tail content would fail to estimate the rare-event quantity and vastly under-estimate the uncertainty. This uncertainty under-estimation could come from a power-law tail or non-power-law subexponential tail. Using extreme value theory in Question 3 to extrapolate tail (such as the peak-over-threshold method, e.g., \cite{leadbetter1991basis}) helps to an extent, but could introduce extra bias, at least using our fitting methods. The introduction of bias is primarily due to the presence of the unknown slowly varying function in the true data distribution, making the generalized Pareto distribution a misspecified tail model. On the positive side, our numerical experiment demonstrates that the extreme index estimator effectively identifies cases prone to uncertainty under-estimation, irrespective of whether it is caused by a power-law tail or non-power-law subexponential tail. This identification serves as an indicator that additional data collection is necessary to improve the precision and reliability of the estimation and uncertainty quantification.


Finally, we contrast our study of Question 0 to some related works \cite{dupuis2020sensitivity,blanchet2014robust,lam2015simulating} and especially \cite{olveracravioto2006,jelenkovic1999network}. \cite{dupuis2020sensitivity} investigated the sensitivities on the large deviations rate when the input model deviates within a R\'{e}nyi divergence ball. \cite{blanchet2014robust} showed in a similar context that imposing a single ball over all inputs, thus allowing the distortion of dependency structure among the inputs, can lead to a substantially heavier tail than the original model when the Kullback-Leibler divergence is used. \cite{lam2015simulating} studied robust rare-event simulation when the input tail is unknown but subject to geometric assumptions. \cite{olveracravioto2006} studied the impacts on the waiting times when the tail of service times is misspecified or truncated. Relating to \cite{jelenkovic1999network}, they investigated the truncation threshold needed to retain the heavy-tailed characteristic of a system. They also contrasted it with the light-tailed case and observed that the required threshold is higher for heavy tail. Our observation in this regard is thus similar to \cite{olveracravioto2006}, but with a different setting (aggregation of i.i.d. variables), which requires us to derive elaborately the Berry-Ess\'{e}en expansion and exact large deviations asymptotics for truncated distributions in order to compare the rare-event probabilities under the truncated and the true distributions. Moreover, we focus on the statistical implications asked in Questions 1-3, validate our theory numerically and investigate error assessment schemes.

The remainder of this paper is as follows. Section \ref{sec:setting} describes our estimation target and explains the impacts of tail truncation in light- versus heavy-tailed cases. Section \ref{sec:data driven scenario} discusses the data requirement thresholds for reliable and unreliable estimations. Section \ref{sec:experiments} shows numerical results in the use of truncated distributions, empirical distributions, bootstrapping, and the detection of heavy tails. 
The appendix contains all the proofs and supplemental results. 


Throughout this paper, for any sequences $a_n,b_n\in\mathbb R$ we write  $a_n= o(b_n)$ if $a_n/b_n\to0$ as $n\to\infty$, $a_n= \omega(b_n)$ if $a_n/b_n\to\infty$ as $n\to\infty$, $a_n=O(b_n)$ if there exists an integer $n_0$ such that $|a_n/b_n|\leq \overline M$ for $n\geq n_0$ and $0<\overline M<\infty$, and $a_n=\Theta(b_n)$ if there exists an integer $n_0$ such that $\underline M\leq|a_n/b_n|\leq \overline M$ for $n\geq n_0$ and $0<\underline M\leq\overline M<\infty$.

\section{Setting and Theory.}\label{sec:setting}
We consider estimating the overshoot of an aggregation of $n$ i.i.d. variables, i.e., consider $p=P(S_n>\gamma)$ where $S_n=X_1+\cdots+X_n$ and $X_i\in\mathbb R$ are i.i.d. variables drawn from the distribution $F$. We denote $X$ as a generic copy of $X_i$ for convenience. We assume the density of $X$ exists and denote as $f$. Correspondingly, we let $\bar F(x)=1-F(x)$ be the tail distribution function. We let $\mu=E[X]<\infty$. Suppose $\gamma=\gamma(n)$ is a high level that grows to $\infty$ as $n\to\infty$. 


Our investigation pertaining to Question 0 is the following. Suppose we truncate the distribution $F(x)$ at the point $u$ so that the density becomes 0 for $x>u$, i.e., consider the truncated distribution function given by 
$$\tilde F_u(x)=\left\{\begin{array}{ll}
F(x)/F(u)&\text{for\ \ }x\leq u\\
1&\text{for\ \ }x>u\end{array}\right. $$
and correspondingly the truncated density $\tilde f_u(x)=(f(x)/F(u))I(x\leq u)$, where $I(\cdot)$ denotes the indicator function. For convenience, denote $p(G)$ as the probability $P_G(S_n>\gamma)$ where $X_i$'s are governed by an arbitrary distribution $G$, and we simply denote $P(S_n>\gamma)$ if $X_i$'s are governed by $F$. We would like to investigate the approximation error $p(\tilde F_u)-p(F)$, which is by definition
\begin{equation}
p(\tilde F_u)-p(F)=\frac{P(S_n>\gamma,X_i\leq u\ \forall i=1,\ldots,n)}{F(u)^n}-P(S_n>\gamma). \label{error}
\end{equation}

Note that, roughly speaking, the above situation captures the case where we use the empirical distribution to plug into our input model, so that the probability mass is zero for regions outside the scope of data or close to zero at the very tail of the data. The proportional constant $F(u)$ is introduced to ensure a proper truncated distribution and has little effect on the mass below $u$ when $u$ is reasonably big.

\subsection{Heavy-Tailed Case.}\label{sec:theory-heavytail}
We first consider the Pareto-tail case. Suppose that $\bar F$ has a regularly varying tail in the form
\begin{equation}
\bar F(x)=L(x)x^{-\alpha}\label{rv}
\end{equation}
for some slowly varying function $L(\cdot)$ and $\alpha>2$, and $E|X|^{2+\delta}<\infty$.

Suppose $n\to\infty$ and $\gamma=n\mu+\Theta(n)$ (or more generally $\gamma=n\mu+\omega(\sqrt{n\log n}))$. In this case, it is known that $P(S_n>\gamma)$ is approximately $P(\text{at least one\ }X_i>\gamma-n\mu)$, or probabilistically, that the rare event $S_n>\gamma$ happens most likely due to a big jump from one of the $X_i$'s (e.g., \cite{embrechts2013modelling,nair2022fundamentals}). Thus, if the truncation level $u$ is too small compared to $\gamma-n\mu$, then the big jump that contributes to the dominating mass of the rare event is barred, making $P(S_n>\gamma,X_i\leq u\ \forall i=1,\ldots,n)$ substantially smaller than $P(S_n>\gamma)$. In this situation, $p(\tilde F_u)$ becomes negligible compared to $P(S_n>\gamma)$, and the approximation error \eqref{error} is effectively $-P(S_n>\gamma)$. In other words, using a truncated input distribution leads to a substantial under-estimate with a bias almost equal to the magnitude of the rare-event probability itself.

Alternately, we can write the approximation error \eqref{error} as 
\begin{equation}
\frac{-P(S_n>\gamma,\text{\ at least one\ }X_i>u)+P(S_n>\gamma)(1-F(u)^n)}{F(u)^n}.\label{interim}
\end{equation}
Again, when $u$ is relatively small compared to $\gamma-n\mu$, then the event $\{\text{at least one\ }X_i>u\}$ inside the probability $P(S_n>\gamma,\text{\ at least one\ }X_i>u)$ is redundant, making this probability asymptotically equivalent to $P(S_n>\gamma)$ and that \eqref{interim} is asymptotically equivalent to $-P(S_n>\gamma)$.

We summarize the above as:
\begin{theorem}[Unreliable approximation of $p(F)$ for heavy-tailed distributions]
Suppose $X_i$'s are i.i.d. random variables with regularly varying tail distribution $\bar F$ in the form \eqref{rv} with $\alpha>2$ and $E|X|^{2+\delta}<\infty$. Let $n\to\infty$ and $\gamma=n\mu+\omega(\sqrt{n\log n})$. Assume $u\leq \mu+O((\gamma-n\mu)/\sqrt{\log n})$. The discrepancy between using a truncated distribution $\tilde F_u$ and the original distribution $F$ in evaluating the probability $p(F)=P(S_n>\gamma)$ as $n\to\infty$ is given by
$$ p(\tilde F_u)-p(F)=-p(F)(1+o(1)).$$
\label{heavy}
\end{theorem}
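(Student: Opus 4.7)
The plan is to start from the alternative expression \eqref{interim} for the error and reduce the theorem to verifying that $P(S_n>\gamma,\text{at least one }X_i>u) \sim p(F)$, equivalently $P(S_n>\gamma,X_i\le u\ \forall i) = o(p(F))$. Given this, the numerator of \eqref{interim} simplifies to $-p(F)F(u)^n + o(p(F))$, and provided $F(u)^n$ stays bounded below, division yields $-p(F)(1+o(1))$, as desired.

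For the reduction, I would first invoke a Nagaev-type exact asymptotic $p(F) \sim n\bar F(\gamma-n\mu)$, valid in the large-deviations regime $\gamma-n\mu=\omega(\sqrt{n\log n})$ for regularly varying tails with index $\alpha>2$ and $E|X|^{2+\delta}<\infty$; this places $p(F)$ on a polynomial-in-$n$ scale. The classical big-jump principle then asserts that the rare event $\{S_n>\gamma\}$ is, to leading order, realized by a single $X_i$ taking value of order $\gamma-n\mu$. Since the hypothesis $u-\mu=O(\beta/\sqrt{\log n})$ with $\beta=\gamma-n\mu$ forces $u=o(\beta)$, any such dominant big jump exceeds $u$, giving $P(S_n>\gamma,\max_iX_i>u) \sim p(F)$ and therefore $P(S_n>\gamma,X_i\le u\ \forall i)=o(p(F))$. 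Equivalently, writing $P(S_n>\gamma,X_i\le u\ \forall i)=F(u)^n p(\tilde F_u)$ and bounding $p(\tilde F_u)$ by a Chernoff/Cram\'er estimate for the variables $X\mid X\le u$, whose support is bounded above by $u$ and whose variance $\sigma_u^2$ tends to $\sigma^2<\infty$, the Gaussian-type rate $nI_u(\gamma/n)$ is of order $\beta^2/(n\sigma^2)=\omega(\log n)$, yielding $p(\tilde F_u)\le n^{-\omega(1)}$.

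To control the normalization $F(u)^n$, I would split on the magnitude of $u$. When $n\bar F(u)=O(1)$---equivalently, $u=\Omega(n^{1/\alpha})$---$F(u)^n$ is bounded away from zero and the reduction above closes the argument. When $F(u)^n\to0$, so $u$ is smaller (e.g., bounded), the truncated mean $\mu_u=E[X\mid X\le u]$ stays bounded away from $\mu$, so $I_u(\gamma/n)$ is bounded below by a positive constant and $p(\tilde F_u)\le e^{-\Theta(n)}\ll p(F)=n^{-\Theta(1)}$ directly, bypassing the normalization.

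The hard part will be securing the Chernoff bound on $p(\tilde F_u)$ uniformly across the allowed range of $u$. For $\alpha\in(2,3]$ the higher cumulants $\kappa_k^u$ of $\tilde F_u$ grow like $u^{k-\alpha}$, which can invalidate a naive Gaussian Taylor expansion of $\Lambda_u$ at the tilt $\lambda^*=(\gamma/n-\mu_u)/\sigma_u^2$, and standard Bennett/Bernstein inequalities only deliver subpolynomial decay for $u$ near $\mu+\beta/\sqrt{\log n}$. I would instead invoke the Fuk--Nagaev inequality directly on the truncated variables: choosing its own truncation threshold equal to $u$ kills the polynomial big-jump branch (since no $X_i$ exceeds $u$ under $\tilde F_u$), leaving only the Gaussian branch whose exponent of order $\omega(\log n)$ delivers the required $n^{-\omega(1)}$ upper bound on $p(\tilde F_u)$ and closes the proof.
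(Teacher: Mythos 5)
Your opening reduction matches the paper's: write the error via \eqref{interim} and aim to show $P(S_n>\gamma,\ X_i\le u\ \forall i)/F(u)^n = o(p(F))$, using the Nagaev-type asymptotic $p(F)\sim n\bar F(\gamma-n\mu)$. You also correctly flag that the genuine difficulty is bounding $p(\tilde F_u)$ uniformly over the permitted range of $u$. However, both of your proposed closings have gaps. First, the split on $F(u)^n$ is incomplete: you equate $F(u)^n\to 0$ with ``$u$ smaller (e.g., bounded),'' but $F(u)^n\to0$ only forces $u=o(n^{1/\alpha})$, and in the regime where $u\to\infty$ slowly (so $F(u)^n\to0$ \emph{and} $\mu_u\to\mu$), neither of your two branches applies. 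Second, the claim that invoking Fuk--Nagaev on the truncated variables with threshold $u$ ``kills the polynomial big-jump branch, leaving only the Gaussian branch'' is not correct as stated. The three-term Fuk--Nagaev bound has, in addition to the $\sum_i P(X_i>y)$ term (which you do kill) and the pure-Gaussian term, a Bennett/log-regime middle term whose exponent in the worst sub-regime is of order $(\gamma-n\mu)(u-\mu)^{-1}\cdot\log\bigl((u-\mu)(\gamma-n\mu)/n\bigr)$; when $u-\mu$ is comparable to $n\sqrt{\log n}/(\gamma-n\mu)$ and $(\gamma-n\mu)^2/n$ is near its lower threshold $\log n$, this exponent is only $\Theta(\sqrt{\log n}\log\log n)=o(\log n)$, subpolynomial, and hence not $o(p(F))$. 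The two-term version (e.g.\ Nagaev's Corollary 1.7) replaces the first two terms with a moment-polynomial term $nE|Y|^t/x^t$ that is likewise not annihilated by boundedness alone; closing that route would require taking the moment exponent $t>\alpha$ and tracking the $u$-dependent growth $E_{\tilde F_u}|Y|^t\sim u^{t-\alpha}L(u)$ with a Potter-type bound, a calculation you do not carry out.

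The paper avoids both difficulties. It handles the key estimate by citing equation (1.45) of Nagaev (1979), a refined exact large-deviation asymptotic for the sum with \emph{all} summands constrained below a level $y_n$, which directly gives $P(S_n>\gamma,\ X_i\le u\ \forall i)=o\bigl(n\bar F(\gamma-(n-1)\mu)\bigr)$ without any Chernoff/Bennett intermediary. And instead of a case analysis on $F(u)^n$, the paper verifies everything only at the boundary value $u^\star=\mu+\overline M(\gamma-n\mu)/\sqrt{\log n}$ (where it shows $F(u^\star)^n\to1$ directly from slow variation), then observes that $P(S_n>\gamma,\ X_i\le u\ \forall i)/F(u)^n = P_{\tilde F_u}(S_n>\gamma)$ is nondecreasing in $u$ by stochastic dominance of $\tilde F_u$ over $\tilde F_{u'}$ for $u>u'$, so the boundary case handles all smaller $u$ uniformly. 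Adopting that monotonicity reduction would let you dispense with your case split entirely; you would still need to either import Nagaev's (1.45) or carry out the more delicate moment-growth version of the Fuk--Nagaev argument sketched above.
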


On the other hand, when the truncation level is large enough, the probability $P(S_n>\gamma,\text{\ at least one\ }X_i>u)$ will be negligible compared to $p(F)=P(S_n>\gamma)$, and $F(u)^n\rightarrow1$. From (\ref{interim}), the error would be asymptotically negligible. This is summarized as the following theorem:
\begin{theorem}[Reliable approximation of $p(F)$ for heavy-tailed distributions]
Suppose $X_{i}$'s are i.i.d. random variables with regularly varying tail distribution $\bar{F}$ in the form \eqref{rv} with $\alpha>2$ and $E|X|^{2+\delta}<\infty$. Let $n\rightarrow\infty$ and $\gamma=n\mu+\omega(\sqrt{n\log n})$. Assume $u=\omega((\gamma-n\mu)^{\beta})$ for some $\beta>1$. The discrepancy between using a truncated distribution $\tilde{F}_{u}$ and the original distribution $F$ in evaluating the probability $p(F)=P(S_{n}>\gamma)$ as $n\rightarrow\infty$ is asymptotically negligible, i.e.,
\[
p(\tilde{F}_{u})-p(F)=o(p(F)).
\]
\label{heavy_large_u}
\end{theorem}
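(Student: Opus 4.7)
The plan is to work directly with the rewriting \eqref{interim} and show that both terms in the numerator are $o(p(F))$ while the denominator $F(u)^n$ tends to $1$. The central analytical input is Nagaev's exact large deviations asymptotic for sums with regularly varying summands: under the moment condition $E|X|^{2+\delta}<\infty$ together with $\alpha>2$, and in the moderate-deviation window $\gamma-n\mu=\omega(\sqrt{n\log n})$, one has $p(F)\sim n\bar F(\gamma-n\mu)$ as $n\to\infty$. This is the quantitative form of the one-big-jump heuristic, and serves as the benchmark against which every error contribution is measured.

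First I would verify that $n\bar F(u)\to 0$, hence $F(u)^n\to 1$ and $1-F(u)^n\sim n\bar F(u)$. Since $u=\omega((\gamma-n\mu)^\beta)$ with $\beta>1$ and $\gamma-n\mu=\omega(\sqrt{n\log n})$, we obtain $u=\omega((n\log n)^{\beta/2})$; because $\alpha\beta/2>1$, Potter's bound absorbs the slowly varying factor $L(u)$ into a sub-polynomial term and yields $nL(u)u^{-\alpha}\to 0$. A direct consequence is that the second numerator term in \eqref{interim} satisfies
$$|p(F)(1-F(u)^n)|=p(F)\cdot O(n\bar F(u))=o(p(F)).$$

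Next I would bound the first numerator term by the elementary union bound,
$$P(S_n>\gamma,\text{ at least one }X_i>u)\le P\Bigl(\bigcup_{i=1}^n\{X_i>u\}\Bigr)\le n\bar F(u),$$
and compare this with $p(F)\sim n\bar F(\gamma-n\mu)$; the task reduces to showing $\bar F(u)/\bar F(\gamma-n\mu)\to 0$. Write this ratio as $\bigl(L(u)/L(\gamma-n\mu)\bigr)\bigl(u/(\gamma-n\mu)\bigr)^{-\alpha}$. Since $u/(\gamma-n\mu)=\omega((\gamma-n\mu)^{\beta-1})\to\infty$, Potter's bound ensures that $L(u)/L(\gamma-n\mu)$ grows no faster than any prescribed positive power of $u/(\gamma-n\mu)$, so the factor $(u/(\gamma-n\mu))^{-\alpha}$, with $\alpha>0$, dominates and drives the whole ratio to zero. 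Combining the two estimates, both numerator terms in \eqref{interim} are $o(p(F))$ while the denominator converges to $1$, establishing $p(\tilde F_u)-p(F)=o(p(F))$.

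The main obstacle is the clean invocation of Nagaev's exact asymptotic at the right sharpness: one must confirm that $\alpha>2$ together with $E|X|^{2+\delta}<\infty$ and $\gamma-n\mu=\omega(\sqrt{n\log n})$ indeed places the problem in the classical moderate-deviations regime where $P(S_n>\gamma)\sim n\bar F(\gamma-n\mu)$ is uniform in the relevant range of $\gamma$. Once that benchmark is fixed, the slowly-varying manipulations via Potter's inequalities and the union-bound step are routine, as is the direct verification $n\bar F(u)\to 0$.
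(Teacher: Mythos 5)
Your proof is correct and follows essentially the same route as the paper's: both start from the rewriting \eqref{interim}, use the union bound $P(S_n>\gamma,\ \text{at least one }X_i>u)\le n\bar F(u)$, invoke Nagaev's exact asymptotic $p(F)\sim n\bar F(\gamma-n\mu)$, and reduce the problem to showing $n\bar F(u)\to 0$ and $\bar F(u)/\bar F(\gamma-n\mu)\to 0$. The only cosmetic difference is that you carry out the slowly-varying-function manipulations via Potter's inequalities, whereas the paper cites the equivalent facts $x^{\pm\lambda}L(x)\to\infty$ or $0$ from Bingham et al.
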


Consider the case $\gamma=bn$ for some $b>\mu$. Theorem \ref{heavy} states that when $u$ is below $(b-\mu)n/\sqrt{\log n}$, the rare-event estimation is essentially void, at least asymptotically. On the other hand, Theorem \ref{heavy_large_u} ensures that the estimation is reliable when $u$ is larger than $n^\beta(b-\mu)^\beta$. In other words, the truncation level should grow faster than a linear function of $n$ to get a reliable estimation of the rare-event probability. When the number of input components $n$ is large, it could be difficult to sustain an accuracy level given a finite set of input data.

\subsection{Light-Tailed Case.}
We now consider $X$ that possesses finite exponential moment, i.e., the logarithmic moment generating function $\psi(\theta)=\log E[e^{\theta X}]<\infty$ for $\theta$ in a neighborhood of 0. Define $\mathcal{D=}\{\theta:\psi\left(  \theta\right)  <\infty\}$. Consider $\gamma=bn$ for some constant $b>\mu$. Suppose that there exists a unique solution $\theta^*\in\mathcal{D}^{\circ}$ to the equation $b=\psi'(\theta)$. Then $p(F)=P(S_n>\gamma)$ exhibits exponential decay as $n\to\infty$, i.e., $-(1/n)\log p(F)\to I$ where $I$ is the rate function given by the Legendre transform or the convex conjugate of $\psi(\theta)$ 
$$I=\sup_{\theta}\left\{b\theta-\psi(\theta)\right\}=b\theta^*-\psi\left(  \theta^*\right).$$
In fact, if $X$ is further assumed non-lattice, we have the following more accurate asymptotic from Theorem 3.7.4 in \cite{dembozeitouni1998} (although the asymptotic there is for $P(S_n\geq\gamma)$, the proof can be applied to $P(S_n>\gamma)$) 
\begin{equation}
p(F)=\frac{1}{\theta^*\sqrt{\psi''(\theta^*)2\pi n}}e^{-nI}(1+o(1)),\label{asymptotic_original_nonlattice} 
\end{equation}
and if $X$ is lattice, i.e., $(X-x_{0})/h$ is a.s. an integer for some $x_{0}$ and $h$, and $h$ is the largest number with this property ($h$ is called the span of $X$) and $0<P(X=b)<1$, we can also get an accurate asymptotic from Remark 8.1.5 and Remark 8.1.6 in \cite{bucklew2013introduction}%
\begin{equation}
p(F)=\frac{he^{-\theta^*h}}{(1-e^{-\theta^*h})\sqrt{\psi^{\prime\prime}(\theta^*)2\pi n}}e^{-nI}(1+o(1)).\label{asymptotic_original_lattice}
\end{equation}
For the truncated distribution, we similarly define the logarithmic moment generating function by $\psi_{u}\left(  \theta\right)  =\log E\left[
\left.  e^{\theta X}\right\vert X\leq u\right]  $. Let us consider the non-lattice case as an example. Just like the asymptotic (\ref{asymptotic_original_nonlattice}) of the original distribution, we can approximate $p(\tilde{F}_u)$ by $p(\tilde{F}_u) \approx e^{n\psi_{u}(  \theta_{u}^*)  -nb\theta_{u}^*} /\sqrt{\psi_{u}^{\prime\prime}\left(  \theta_{u}^*\right)2\pi n  } \theta_{u}^*$, where $\theta_{u}^*$ is the solution to the equation $\psi_{u}^{\prime}\left(  \theta\right)  =b$. If the truncation level $u$ is large enough, $\psi_{u}\left(  \theta\right)  $ will converge to $\psi\left(  \theta\right)  $ so fast that the error caused by replacing $\theta_{u}^*$ and $\psi_{u}$ with $\theta^*$ and $\psi$ will be negligible, i.e.,
\begin{equation}
p(\tilde{F}_u) \approx \frac{e^{n\psi_{u}(  \theta_{u}^*)  -nb\theta_{u}^*}} {\sqrt{\psi_{u}^{\prime\prime}\left(  \theta_{u}^*\right)2\pi n  }\theta_{u}^*}  \approx \frac{e^{n\psi(  \theta^*)  -nb\theta^*}}{\sqrt{\psi^{\prime\prime}\left( \theta^*\right)2\pi n  }\theta^*}.
\label{approx_truncated}
\end{equation}
In fact, if we assume that the choice of $u$ satisfies that $\exists~\theta^{\prime}>0$ s.t. $\theta^*+\theta^{\prime}
\in\mathcal{D}^{\circ}$ (such $\theta^{\prime}$ exists since $\theta^*\in\mathcal{D}^{\circ}$) and
\begin{equation}
\lim_{n\rightarrow\infty}\frac{n}{e^{\theta^{\prime} u}}=0,\label{restrictions_on_un}
\end{equation}
then the approximation in (\ref{approx_truncated}) can be made precise as follows.

\begin{lemma}[Exact asymptotics for truncated distributions]
\label{asymptotic_truncated} Consider $\gamma=bn$ for some constant $b>\mu$. Suppose $0\in\mathcal{D}^{\circ}$ where $\mathcal{D=}\{\theta:\psi\left(\theta\right)  <\infty\}$, and there exists a unique solution $\theta^{\ast}\in\mathcal{D}^{\circ}$ to the equation $b=\psi^{\prime}(\theta)$. Further, suppose the choice of $u$ satisfies (\ref{restrictions_on_un}).

\begin{description}
\item[(1)] If the distribution of $X$ is non-lattice, then we have%
\[
\lim_{n\rightarrow\infty}\theta^*\sqrt{\psi^{\prime\prime}(\theta^*)2\pi n}e^{nb\theta^*-n\psi(\theta^*)}p(\tilde{F}_{u})=1.
\]

\item[(2)] Suppose $X$ has a lattice distribution, i.e. $(X-x_{0})/h$ is a.s. an integer for some $x_{0}$ and span $h$. Assume further that $0<P(X=b)<1$, i.e., $b$ is in the lattice of $X$. Then we have%
\[
\lim_{n\rightarrow\infty}\theta^*\sqrt{\psi^{\prime\prime}(\theta^*)2\pi n}e^{nb\theta^*-n\psi(\theta^*)}p(\tilde{F}_{u})=\frac{\theta^*he^{-\theta^*h}}{1-e^{-\theta^*h}}.
\]

\end{description}
\end{lemma}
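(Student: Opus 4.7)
The plan is to prove both parts by exponential tilting of $\tilde F_u$ followed by a triangular-array refinement of the classical Bahadur--Rao exact asymptotics. The central quantitative ingredient, which will drive everything else, is that (\ref{restrictions_on_un}) makes $\psi_u$ extremely close to $\psi$ near $\theta^*$: for any $\theta$ with $\theta + \theta' \in \mathcal{D}^\circ$,
$$E[e^{\theta X} I(X > u)] \le e^{-\theta' u} E[e^{(\theta+\theta')X}] = O(e^{-\theta' u}),$$
so $\psi_u(\theta) = \psi(\theta) + O(e^{-\theta' u})$ uniformly on a neighborhood of $\theta^*$, with the same type of bound for $\psi_u'$ and $\psi_u''$ obtained by differentiating under the integral using the same exponential envelope. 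The implicit function theorem then yields $\theta_u^* - \theta^* = O(e^{-\theta' u})$, and a second-order Taylor expansion using $b = \psi'(\theta^*)$ gives $(b\theta_u^* - \psi_u(\theta_u^*)) - I = O(e^{-\theta' u})$. Since (\ref{restrictions_on_un}) forces $n e^{-\theta' u} \to 0$, every replacement of a truncated quantity by its limit, \emph{even inside the exponent} $e^{n[b\theta_u^* - \psi_u(\theta_u^*)]}$, incurs only a multiplicative $(1+o(1))$ error.

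For part (1), I tilt $\tilde F_u$ by $\theta_u^*$ to obtain a probability measure $\tilde Q_u$ under which $X$ has mean $b$ and variance $\sigma_u^2 := \psi_u''(\theta_u^*)$. Writing $Z_n = (S_n - nb)/(\sigma_u\sqrt n)$,
$$p(\tilde F_u) = e^{n\psi_u(\theta_u^*) - nb\theta_u^*}\; \tilde E_{\tilde Q_u}\!\left[ e^{-\theta_u^*\sigma_u\sqrt n\, Z_n} I(Z_n > 0) \right].$$
A triangular-array local limit theorem for the density of $Z_n$ under $\tilde Q_u$ reduces the inner expectation to $\int_0^\infty e^{-\theta_u^*\sigma_u\sqrt n\, z}\phi(z)\, dz \sim 1/(\theta_u^*\sigma_u\sqrt{2\pi n})$ by the standard Laplace argument; combined with the first paragraph's estimates, this yields the claimed limit. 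For part (2), the argument is structurally identical but uses the lattice local CLT: since $X$ is lattice with span $h$ and $b$ lies in the lattice, for all sufficiently large $u$ the truncated $\tilde Q_u$ remains lattice with span $h$, and the local CLT gives $\tilde Q_u(S_n - nb = hk) = (h/\sqrt{2\pi n\sigma_u^2})(1+o(1))$ uniformly in a $\sqrt n$-window of $0$. The inner expectation becomes
$$\sum_{k=0}^\infty e^{-\theta_u^* h k} \cdot \frac{h}{\sqrt{2\pi n\sigma_u^2}}(1+o(1)) = \frac{h}{(1-e^{-\theta_u^* h})\sqrt{2\pi n\sigma_u^2}}(1+o(1)),$$
producing the extra factor $\theta^* h e^{-\theta^* h}/(1-e^{-\theta^* h})$ after recombination with the prefactor.

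The main obstacle is establishing the local limit theorem \emph{uniformly} across the triangular array $\{\tilde Q_u\}_u$; this is the reason the paper flags the result as requiring elaborate analysis. In the non-lattice case the key step is a uniform Cram\'er condition $\sup_{|t|\ge\varepsilon}|\tilde\phi_u(t)| \le 1 - \delta$ with $\varepsilon,\delta > 0$ independent of $u$, where $\tilde\phi_u$ denotes the characteristic function of a single summand under $\tilde Q_u$. I would obtain it by splitting the $t$-range: on compact sets one has $\tilde\phi_u \to \phi^{(\theta^*)}$ uniformly at the exponential rate $O(e^{-\theta' u})$, and the limit satisfies the ordinary Cram\'er bound since $F$ has a density; for $|t|$ large a Riemann--Lebesgue--type tail bound inherited from the integrability of $f$ carries over uniformly through the truncation. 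The uniform third moment needed to control the Edgeworth remainder follows from the same exponential-envelope argument as in paragraph one. The lattice case requires the analogous uniform bound on $|\tilde\phi_u(t)|$ away from the integer multiples of $2\pi/h$, obtained in the same way.
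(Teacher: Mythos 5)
Your sketch is correct in its essential ideas and takes a genuinely different (though structurally parallel) route from the paper's proof, so it is worth comparing the two.

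\textbf{Choice of tilt parameter.} You tilt by the truncated saddle point $\theta_u^*$, the solution of $\psi_u'(\theta)=b$, so that the tilted variable is exactly centered at $b$. The paper instead tilts by the fixed $\theta^*$, which leaves a small off-centering $\mu_n=\psi_n'(\theta^*)-b$ that must be shown to vanish faster than $1/n$ (their Lemma~\ref{implication_of_un}); the Berry--Ess\'een expansion is then evaluated at a nearby point $a_n\to0$. Your route avoids the $\mu_n$ bookkeeping at the price of first establishing that $\theta_u^*$ exists, $\theta_u^*-\theta^*=O(e^{-\theta'u})$, and that $b\theta_u^*-\psi_u(\theta_u^*)-I=O(e^{-\theta'u})$. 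Your derivation of the last estimate is correct (the first-order cancellation in the Taylor expansion is the key point), and the complexities of the two routes are comparable. Both ultimately rest on the same quantitative engine you single out in your first paragraph: under (\ref{restrictions_on_un}) the truncation perturbs $\psi$, $\psi'$, $\psi''$ near $\theta^*$ only by $O(e^{-\theta'u})$, and $n\cdot O(e^{-\theta'u})\to0$.

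\textbf{Local CLT for densities versus Berry--Ess\'een for distribution functions.} The paper uses an Edgeworth/Berry--Ess\'een expansion of order $1/\sqrt n$ for the CDF of the tilted, centered sum, then evaluates the tail integral by parts. You instead invoke a local limit theorem for the density of $Z_n$ (and the lattice local CLT in part~(2)) and evaluate the Laplace-type integral directly. Both are recognized routes to Bahadur--Rao exact asymptotics. Note, however, that the CDF-based expansion requires only non-latticeness, whereas a uniform local CLT for the density of the tilted sum requires in addition some Cram\'er-type integrability of the characteristic function (e.g. $\hat f\in L^p$ for some finite $p$, or boundedness of the $n_0$-fold convolved density). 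The paper's framework does posit a density for $X$, so this is not a fundamental obstruction, but it is an extra hypothesis worth flagging; and since the truncation $\tilde f_u(x)=f(x)/F(u)\cdot I(x\le u)$ typically introduces a jump at $x=u$, one must check the integrability uniformly in $u$. You correctly identify the uniform-in-$u$ Cram\'er bound as the hard part, and your proposed two-regime argument (uniform exponential convergence $\tilde\varphi_u\to\varphi_\infty$ on compacts, plus tail decay inherited from the density) mirrors the paper's Part I/Part II split.

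\textbf{A small slip in part~(2).} Your geometric sum starts at $k=0$, but the event is $\{S_n>nb\}$ and $nb$ is a lattice point, so the sum must start at $k=1$. With $k\ge1$ the inner expectation is $h e^{-\theta_u^* h}/\bigl[(1-e^{-\theta_u^* h})\sqrt{2\pi n\sigma_u^2}\bigr](1+o(1))$, which matches the limit $\theta^* h e^{-\theta^* h}/(1-e^{-\theta^* h})$ you in fact state. Starting at $k=0$ would give the $P(S_n\ge nb)$ asymptotic $\theta^* h/(1-e^{-\theta^* h})$ appearing in the paper's Appendix~\ref{sec:another target}, not the one being claimed.

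Overall the proposal carries all the right ideas; filling in the uniform triangular-array local CLT would require the same kind of precise characteristic-function estimates that the paper carries out for the Berry--Ess\'een expansion, which is where the bulk of the effort lies.
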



The proof of Lemma \ref{asymptotic_truncated} is quite lengthy because we need to establish a Berry-Ess\'{e}en expansion for a specific triangular array in order to obtain the exact large deviations asymptotic (\cite{dembozeitouni1998} Theorem 3.7.4). In particular, it requires many precise estimates on the characteristic function that generalize the case of i.i.d. sequence (\cite{feller2008introduction} Section XVI.4 Theorem 1; \cite{gnedenko1968limit} Section 43 Theorem 1). Although \cite{garcia1998edgeworth} establishes Edgeworth expansions for general triangular arrays, it is unclear if its technical assumption (2.1) holds for our characteristic function in (\ref{varphi_n}) (in the Appendix) and thus we resort to a self-contained analysis of the triangular array Berry-Ess\'{e}en expansion. 


Based on Lemma \ref{asymptotic_truncated}, (\ref{asymptotic_original_nonlattice}) and (\ref{asymptotic_original_lattice}), we can see $p(F)$ and $p(\tilde{F}_u)$ are asymptotically equal. Therefore, the relative error of using $p(\tilde{F}_u)$ to estimate $p(F)$ is asymptotically negligible as summarized in the following theorem:

\begin{theorem}[Reliable approximation of $p(F)$ for light-tailed distributions]
Consider $\gamma=bn$ for some constant $b>\mu$. Suppose that either $F$ is lattice with $0<P(X=b)<1$ or $F$ is non-lattice. Moreover, suppose $0\in\mathcal{D}^{\circ}$ where $\mathcal{D=}\{\theta:\psi\left(\theta\right)  <\infty\}$, and there exists a unique solution $\theta^{\ast}\in\mathcal{D}^{\circ}$ to the equation $b=\psi^{\prime}(\theta)$. Then, as long as the truncation level $u$ is chosen such that (\ref{restrictions_on_un}) is satisfied, the discrepancy between using a truncated distribution $\tilde{F}_{u}$ and the original distribution $F$ in evaluating the probability $p(F)=P(S_{n}>\gamma)$ is asymptotically negligible, i.e.,
\[
p(\tilde{F}_{u})-p(F)=o(p(F)).
\]
\label{light}
\end{theorem}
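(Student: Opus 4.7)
The plan is to derive Theorem \ref{light} as a direct corollary of Lemma \ref{asymptotic_truncated} together with the classical exact large deviations asymptotics \eqref{asymptotic_original_nonlattice} and \eqref{asymptotic_original_lattice}. The target quantity is the relative error $p(\tilde F_u)/p(F) - 1$, which I would show converges to $0$ as $n\to\infty$; this immediately yields $p(\tilde F_u) - p(F) = o(p(F))$.

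First, I would record the identity $nI = nb\theta^* - n\psi(\theta^*)$, which is just the Legendre transform definition of $I$ evaluated at the unique maximizer $\theta^*$ solving $b = \psi'(\theta)$. This identifies the exponential factor $e^{-nI}$ appearing in \eqref{asymptotic_original_nonlattice}--\eqref{asymptotic_original_lattice} with the factor $e^{n\psi(\theta^*) - nb\theta^*}$ that appears in Lemma \ref{asymptotic_truncated}, so the two sets of asymptotics are in directly comparable forms.

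In the non-lattice case, \eqref{asymptotic_original_nonlattice} gives $p(F) \sim e^{-nI}/(\theta^*\sqrt{\psi''(\theta^*)\,2\pi n})$, while rearranging part (1) of Lemma \ref{asymptotic_truncated} yields $p(\tilde F_u) \sim e^{-nI}/(\theta^*\sqrt{\psi''(\theta^*)\,2\pi n})$ under the growth condition \eqref{restrictions_on_un}. Hence $p(\tilde F_u)/p(F) \to 1$. The lattice case is handled the same way: \eqref{asymptotic_original_lattice} and part (2) of Lemma \ref{asymptotic_truncated} both produce the common leading prefactor $h\,e^{-\theta^* h}/[(1-e^{-\theta^* h})\sqrt{\psi''(\theta^*)\,2\pi n}]$ in front of $e^{-nI}$ (the extra factor $\theta^* h e^{-\theta^* h}/(1-e^{-\theta^* h})$ in the lemma's limit exactly cancels the $1/\theta^*$ in $p(F)$ and absorbs the lattice-span correction), so the ratio again converges to $1$.

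For this theorem the main obstacle is not the argument sketched above but the machinery already packaged into Lemma \ref{asymptotic_truncated}: one must extend the exact large deviations asymptotics, originally stated for a fixed i.i.d.\ distribution, to the triangular array of truncated variables whose distribution drifts with $n$ through the level $u$, and this requires a triangular-array Berry-Ess\'en expansion together with careful control of the associated characteristic function. Once that lemma is in hand, Theorem \ref{light} reduces to the one-line asymptotic matching described above, and the hypothesis \eqref{restrictions_on_un} enters only through its role in Lemma \ref{asymptotic_truncated}.
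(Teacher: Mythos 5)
Your proposal is correct and follows essentially the same route as the paper's proof: invoke Lemma \ref{asymptotic_truncated} for $p(\tilde F_u)$, compare with the classical exact asymptotics \eqref{asymptotic_original_nonlattice}--\eqref{asymptotic_original_lattice} for $p(F)$, and observe that the leading prefactors and exponential rates match, so the ratio tends to $1$. The paper writes out only the non-lattice case and states the lattice case is analogous, while you explicitly check the cancellation of the lattice constants, but the argument is the same.
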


Theorem \ref{light} postulates that as long as the truncation level $u$ is chosen high enough such that (\ref{restrictions_on_un}) is satisfied, then the model error in using the truncated input is negligible. In contrast to the heavy-tailed case, this condition on $u$ dictates typically a logarithmic requirement on $n$. To illustrate the choices of $u$ for different distributions, consider the following Examples \ref{normal ex}-\ref{ex generalization}, the first of which show increasingly stringent requirements of $u$ as the tail heavinesss increases.

\begin{example}
Suppose $F$ is the normal distribution $N\left(  \mu,\sigma^{2}\right)  $. Direct calculation shows that%
\[
\psi\left(  \theta\right)  =\mu\theta+\frac{1}{2}\sigma^{2}\theta^{2},\text{ }\mathcal{D}=\mathbb{R},\text{ }\theta^*=\frac{b-\mu}{\sigma^{2}}.
\]
The choice of $u$ can be any number satisfying $u\geq c\log n$ for some $c>0$.\label{normal ex}
\end{example}

\begin{example}
Suppose $F$ is the exponential distribution $\text{Exp}\left(  \lambda\right)  $. Direct calculation shows that%
\[
\psi\left(  \theta\right)  =\log\lambda-\log\left(  \lambda-\theta\right) \text{ for } \theta<\lambda,\text{ }\mathcal{D}=\left(  -\infty,\lambda\right)  ,\text{ }\theta^*=\lambda-\frac{1}{b}.
\]
The choice of $u$ can be any number satisfying $u\geq c\log n$ for some $c>b$.\label{exp ex}
\end{example}

\begin{example}
Suppose $F$ is the gamma distribution $\Gamma\left(  \alpha,\beta\right)  $. Direct calculation shows that%
\[
\psi\left(  \theta\right)  =-\alpha\log\left(  1-\frac{\theta}{\beta}\right)\text{ for }\theta<\beta,\text{ }\mathcal{D}=\left(  -\infty,\beta\right)  ,\text{ }\theta^*=\beta-\frac{\alpha}{b}.
\]
The choice of $u$ can be any number satisfying $u\geq c\log n$ for some $c>b/\alpha$.\label{gamma ex}
\end{example}

\begin{example}\label{ex generalization}
In this example, let us generalize the three examples above. Suppose $F$ admits a density function $f\left(  x\right)  $ satisfying $f\left(  x\right) =O(e^{-\lambda x})$ as $x\rightarrow\infty$ for some $\lambda>0$. Further, suppose $\theta^*<\lambda$. Then the choice of $u$ can be any number satisfying $u\geq c\log n$ for some $c>1/(\lambda-\theta^*)$.
\end{example}

In contrast to the heavy tail setting, we do not establish results about unreliable estimation in the light tail case since the conditions are subtle. For example, consider an extreme case where $X$ is bounded. Then the estimation $p(\tilde{F}_u)$ would eventually have zero error as long as the truncation level $u$ goes to infinity. Moreover, Theorem \ref{light} is enough to support our point that the estimation problem is harder for the heavy tail case (see the discussion in Section \ref{sec:data driven scenario}).

Finally, besides the quantity $P(S_{n}>\gamma)$ we have been targeting, we can also consider $P(S_{n}\ge\gamma)$. Although the reliability results for probability estimation under truncated distributions do not change, the asymptotics of the probabilities themselves change in the lattice case. For completeness, we present all results related to $P(S_{n}\ge\gamma)$ in Appendix \ref{sec:another target}.

\section{From Theoretical to Data-Driven Information Level.}\label{sec:data driven scenario}
In Section \ref{sec:setting}, we assume we fully know the truncated distribution $\tilde{F}_{u}$. In this section, we connect our developed theory there to data by combining our theorems and extreme value theory, aiming to give guidance on the reliability of rare-event probability estimation in terms of the data size.

Consider estimating $P(S_{n}>nb)$ where $S_{n}$ is the sum of $n$ i.i.d. random variables from an unknown distribution $F$ and $b>\mu=E_{F}[X]$. Suppose now we have $N$ i.i.d. samples $X_{1},X_{2},\ldots,X_{N}$ from $F$ so that we can construct an empirical distribution $\hat{F}_{N}$. We want to answer the question whether the estimation $P_{\hat{F}_{N}}(S_{n}>nb)$ of the true probability $P(S_{n}>nb)$ is reliable. A closely related question is how many samples we need to get a reliable approximation.

\subsection{Empirical Truncation Level.}
A natural approach to connect Theorems \ref{heavy} and \ref{light} to data is to set the truncation level $u$ there as the threshold informable by the data, namely we use a properly defined empirical truncation level. If we know the asymptotic order of the empirical truncation level with respect to the data size $N$, then we can transform the conditions on truncation level in Theorems \ref{heavy} and \ref{light} to conditions on $N$, and we can judge estimation reliability in terms of $N$. Note that a natural definition of the empirical truncation level should converge to infinity as $N\rightarrow\infty$, otherwise it will never approximate the true distribution even as $N\rightarrow\infty$ and lead to an inconsistent estimator. Due to this restriction, it appears not a good choice to define the empirical truncation level as, e.g., 0.95 empirical quantile since the empirical quantile converges to the true quantile instead of infinity.

For simplicity, we set the empirical truncation level as the sample maximum $M_{N}=\max(X_{1},X_{2},\ldots,X_{N})$ so that the empirical truncated distribution is exactly the empirical distribution $\hat{F}_{N}$. Alternatively, we can discard a fixed number, e.g., $20$ largest samples from the data and take the maximum of the remaining data as the truncation level. The latter does not affect our analysis below since the asymptotic orders of the truncation levels remain the same (Corollary 4.2.4 in \cite{embrechts2013modelling}). The remaining task is to analyze the asymptotic order of $M_{N}$ where we can borrow tools from the extreme value theory. In the following, we consider heavy- and light-tailed cases separately because $M_{N}$ behaves differently in these two cases. A summary of our analyses (with selected light-tailed distributions) is displayed in Table \ref{empirical_level_table}.

\begin{table}
\centering
\caption{Required data size for reliably estimating $p(F)=P(S_n > nb)$ for $S_n=X_1+\cdots+X_n,X_i \overset{i.i.d.}{\sim}F$ with mean $\mu$ and $b>\mu$.}
\label{empirical_level_table}
\resizebox{\columnwidth}{!}{
\begin{tblr}{
  cells = {c},
  hline{1,5} = {-}{0.08em},
  hline{2} = {-}{},
}
Distribution                                      & Reliable $u$         & Order of $M_N$             & Minimum $N$                           & Minimum $N$ (based on~$p(F)$)                             \\
{Heavy-tailed\\($\bar{F}(x)=x^{-\alpha}L(x)$)}    & $(n(b-\mu))^{\beta}$ & $N^{1/\alpha}$             & $(n(b-\mu))^{\alpha\beta}$            & $n^{\beta}/(p(F))^{\beta}$                                \\
{Light-tailed\\(normal $N(\mu,\sigma^2)$)}        & $c\log n, c>0$       & $\sqrt{2\sigma^{2}\log N}$ & $n^{\frac{c^{2}}{2\sigma^{2}}\log n}$ & $n^{-\frac{c^{2}\log p(F)}{(b-\mu)^{2}}\frac{\log n}{n}}$ \\
{Light-tailed\\(exponential with rate $\lambda$)} & $b\log n$            & $\log N/\lambda$           & $n^{\lambda b}$                       & $n^{1+\sqrt{-\frac{2}{n}\log(p(F))}}$                     
\end{tblr}
}
\end{table}

\subsection{Heavy-Tailed Case.}\label{sec:samplesize_heavy}
In the heavy-tailed case, suppose the unknown distribution $F$ has a tail $\bar{F}(x)=x^{-\alpha}L(x)$ for $\alpha>2$ where $L(x)$ is a slowly varying function. The index parameter $\alpha$ can be estimated by the tail index estimator. By Theorem 3.3.7 in \cite{embrechts2013modelling}, we know that $c_{N}^{-1}M_{N}\overset{d}{\rightarrow}\Phi_{\alpha}$ where $\Phi_{\alpha}$ is the Fr\'{e}chet distribution with parameter $\alpha$ and the normalizing constant $c_{N}$ satisfies $c_{N}=N^{1/\alpha}L_{1}(N)$ for some slowly varying function $L_{1}$. Thus we have a rough approximation $M_{N}\approx N^{1/\alpha}$. According to Theorem \ref{heavy}, when the truncation level $u\leq \mu+\overline{M}n(b-\mu)/\sqrt{\log n}$ for a fixed number $\overline{M}>0$, the approximation $p(\tilde{F}_{u})$ is not reliable. So when $M_{N}\approx N^{1/\alpha}\leq \mu+\overline{M}n(b-\mu)/\sqrt{\log n}$, i.e., $N\leq(\mu+\overline{M}n(b-\mu)/\sqrt{\log n})^{\alpha}\approx(\overline{M}n(b-\mu)/\sqrt{\log n})^{\alpha}$, the estimate $p(\hat{F}_{N})$ is not reliable. By Theorem \ref{heavy_large_u} and a similar argument, $p(\hat{F}_{N})$ is reliable when $N>(n(b-\mu))^{\alpha\beta}$ for some $\beta>1$. Moreover, by (1.25b) in \cite{nagaev1979large}, $b-\mu$ can be approximately represented by the true probability $p(F)$:
\begin{align}
& p(F)=P(S_{n}\geq nb)=P(S_{n}-n\mu\geq n(b-\mu))\approx n\bar{F}(n(b-\mu))\approx n(n(b-\mu))^{-\alpha}\nonumber\\
& \Rightarrow b-\mu\approx(p(F))^{-1/\alpha}n^{1/\alpha-1}.\label{relation_p_b_mu}
\end{align}
Plugging (\ref{relation_p_b_mu}) into the above conditions on $N$, we know that $p(\hat{F}_{N})$ is unreliable for $N\le\overline{M}^{\alpha}n/(p(F)(\sqrt{\log n})^{\alpha})$ while $p(\hat{F}_{N})$ is reliable for $N>n^{\beta}/(p(F))^{\beta}$.

\subsection{Light-Tailed Case.}\label{sec:samplesize_light}
The light-tailed case is complicated since the asymptotic order of $M_{N}$ varies for different distributions. So the conclusions are also different for different distributions. To illustrate, we consider two examples. 

First, suppose the true distribution $F$ is the exponential distribution $\text{Exp}(\lambda)$. By Example 3.5.6 in \cite{embrechts2013modelling}, $M_{N}/(\lambda^{-1}\log N)\rightarrow1$ a.s.. Thus we have a rough approximation $M_{N}\approx(\log N)/\lambda$. According to Theorem \ref{light}, when the truncation level $u\geq c\log n$ for some $c>b$, the approximation $p(\tilde{F}_{u})$ is reliable. So when $M_{N}\approx(\log N)/\lambda\geq c\log n>b\log n$, i.e., $N>n^{\lambda b}$, the estimate $p(\hat{F}_{N})$ is reliable. Like the heavy-tailed case, we can approximately represent $\lambda b$ by the true probability $p(F)$. By the large deviations principle and the second order Taylor expansion of $\log(1+x)$, we have
\begin{align}
& p(F)=P(S_{n}\geq nb)\approx\exp(-n(\lambda b-1-\log(\lambda b)))\approx\exp\left(  -\frac{n}{2}(\lambda b-1)^{2}\right)  \nonumber\\
& \Rightarrow\lambda b=1+\sqrt{-\frac{2}{n}\log(p(F))}.\label{relation_p_b_lambda}
\end{align}
Plugging (\ref{relation_p_b_lambda}) into $N>n^{\lambda b}$, we know that the estimate $p(\hat{F}_{N})$ is reliable for $N>n^{1+\sqrt{-2\log(p(F))/n}}$. 

Second, suppose the true distribution $F$ is the normal distribution $N(\mu,\sigma^{2})$. By Example 3.5.4 in \cite{embrechts2013modelling}, $(M_{N}-\mu)/\sqrt{2\sigma^{2}\log N}\rightarrow1$ a.s., which leads to an approximation $M_{N}\approx\mu+\sqrt{2\sigma^{2}\log N}\approx\sqrt{2\sigma^{2}\log N}$. By Theorem \ref{light}, when the truncation level $u\geq c\log n$ for arbitrarily small $c>0$, the approximation $p(\tilde{F}_{u})$ is reliable. So when $M_{N}\approx\sqrt{2\sigma^{2}\log N}\geq c\log n$, i.e.,
\begin{equation}
N\geq\exp\left(  \frac{c^{2}}{2\sigma^{2}}(\log n)^{2}\right)  =n^{\frac{c^{2}}{2\sigma^{2}}\log n}\label{normal_N}   
\end{equation}
for arbitrarily small $c>0$, the estimate $p(\hat{F}_{N})$ is reliable. Now we derive the required data size for a target probability level $p(F)$. According to the large deviations of $N(\mu,\sigma^{2})$, we have%
\[
-\frac{1}{n}\log p(F)\rightarrow\frac{1}{2}\frac{(b-\mu)^{2}}{\sigma^{2}}%
\]
as $n\rightarrow\infty$, which implies the following approximation%
\[
\frac{1}{2\sigma^{2}}\approx-\frac{\log p(F)}{n(b-\mu)^{2}}.
\]
Plugging it into (\ref{normal_N}), we obtain
\[
N>n^{-\frac{c^{2}\log p(F)}{(b-\mu)^{2}}\frac{\log n}{n}}%
\]
for reliably estimating a target probability level $p(F)$.%

The discussion above also explains why the estimation problem is usually harder for heavy-tailed distributions than light-tailed distributions. Suppose the target probability level $p(F)$ is fixed. For the heavy-tailed case, $p(\hat{F}_{N})$ is reliable when $N>n^{\beta}/(p(F))^{\beta}$ for some $\beta>1$. For light-tailed distributions such as exponential, $p(\hat{F}_{N})$ is reliable when $N>n^{1+\sqrt{-2\log(p(F))/n}}$. Since $\beta>1+\sqrt{-2\log(p(F))/n}$ for large $n$, we can see heavy tail requires more data than light tail to estimate the same level of rare-event probability.

We caution that the above analyses are intuitive but not fully rigorous. However, they give us guidelines on the (in)sufficiency of the data size in reliably estimating rare-event probabilities. The next section report numerical results to validate our analyses herein.

\section{Numerical Experiments.} \label{sec:experiments}


We investigate numerically the quality in estimating $p=P(S_{n}>nb)$ with finite input data. We consider different numbers of summands $n$, and vary the magnitude of $p$ by changing the value of $b$. Under this setting, given a certain pair of $n$ and $p$, the tail distribution can be viewed as the only variable in the experiments. We examine three groups of parametric distributions each representing a different type of tail:





\begin{itemize}
\item Regularly varying tails, including the generalized Pareto distribution and Student's $t$-distribution (positive half), whose densities are
\begin{equation*}
f_P(x)=(\xi x)^{-1-\frac{1}{\xi}}, x\geq \frac{1}{\xi}, \quad f_t(x) = \frac{2\Gamma((\nu+1)/2)}{\Gamma(\nu/2)\sqrt{\nu \pi}}\left(1+\frac{x^2}{\nu}\right)^{-\frac{\nu+1}{2}},x\geq0.
\end{equation*}
The tail index $\alpha$ (in \eqref{rv}), which reflects the tail heaviness, equals the inverse of the shape parameter $\xi$ in the generalized Pareto distribution and the degree of freedom $\nu$ in the $t$-distribution (according to the Karamata’s
theorem \cite{embrechts2013modelling}). These distributions satisfy the assumptions in Theorem~\ref{heavy}.  

\item Light-tailed distributions, including standard exponential, standard normal (positive half), and Weibull with the shape parameter $k=2.5$ whose density is given by
\begin{equation*}
f_W(x)=k x^{k-1} e^{-x^k},x\geq0.
\end{equation*}
Since Weibull with shape parameter $k=2.5$ has a lighter tail than normal, we call it ``light-tailed" Weibull distribution in our experiments. The above three distributions satisfy the assumptions in Theorem~\ref{light}. 

\item Several distributions that are out of the scope of our analyses, namely log-normal whose logarithm follows $N(-1/2,1)$, and Weibull with shape parameter $k=0.5$ (we call it ``heavy-tailed" Weibull distribution to distinguish from the light-tailed Weibull above). These distributions do not have a power-law tail, but they are subexponential and can exhibit heavy tail behaviors, e.g. the one-big-jump behavior \cite{asmussen1998subexponential}.

\end{itemize}

Our numerical investigations are divided into four parts. Section~\ref{sec:num_validation} provides empirical validation for the theoretical findings discussed in Section~\ref{sec:setting} by considering estimations using tail-truncated distributions as the input model. Section \ref{sec:tail_heaviness} evaluates the validity of the relations between reliable estimation and data size in Section \ref{sec:data driven scenario}. Section \ref{sec:bootstrap_exp} evaluates the performance of the bootstrap and its combination with extreme value theory tools, in particular the generalized Pareto distribution, in quantifying estimation uncertainty. Lastly, given the observed challenges in both estimation and uncertainty quantification, it appears that the most dependable strategy for ensuring accurate estimations is to have a sample size significantly larger than $1/p$ in cases where inadequate sample size could result in uncertainty under-estimation. In Section \ref{sec:tail_detection}, then, we discuss and recommend some diagnostic methods for detecting these cases. 




\subsection{Validating Theories on the Impacts of Tail Truncation.}\label{sec:num_validation}
We validate the theoretical findings discussed in Section~\ref{sec:setting} by demonstrating that heavy-tailed distributions are more vulnerable to truncated tails. In this set of experiment, we use the true distribution, conditional on being under a truncation point, to generate samples to evaluate the target rare-event probabilities. We use the 0.001 tail quantile as the truncation point, defined as $u$ such that $P(X>u)=0.001$. We compare the rare-event probabilities driven by the truncated distribution, denoted $p_u$, and those driven by the untruncated distribution, denoted $p$. We use the Gaussian distribution as a representative for light tail and the $t$-distribution for heavy tail. To measure the effects of tail heaviness, we consider the $t$-distribution with three different degrees of freedom $\nu=2.5,4,10$. We also consider various numbers of summands $n$. To enhance computational efficiency, we apply various variance reduction techniques including importance sampling for light-tailed cases and conditional Monte Carlo for heavy-tailed cases (for further details, see \cite{asmussen2007stochastic}).

Our results are displayed in Figure \ref{fig:figure_t_group2}. We maintain a consistent true probability level at approximately $10^{-5}$ across our settings, as shown in Figure \ref{fig:validate_prob}. Figure \ref{fig:validate_error} shows the relative errors, which appear to increase with the heaviness of the tails. In particular, for heavier-tailed $t$-distributions, specifically with $\nu=2.5,4$, the relative errors are close to 1, which indicates very poor estimation relative to the true magnitude of the probability. These observations align with the analysis presented in Section~\ref{sec:setting}, which asserts that the absence of tail information is particularly harmful for problems involving heavy-tailed distributions. 



\begin{figure}[!t]
     \centering
     \subfloat[ \label{fig:validate_prob} Probability estimates with untruncated distributions.    ]{\includegraphics[width=0.45\textwidth]{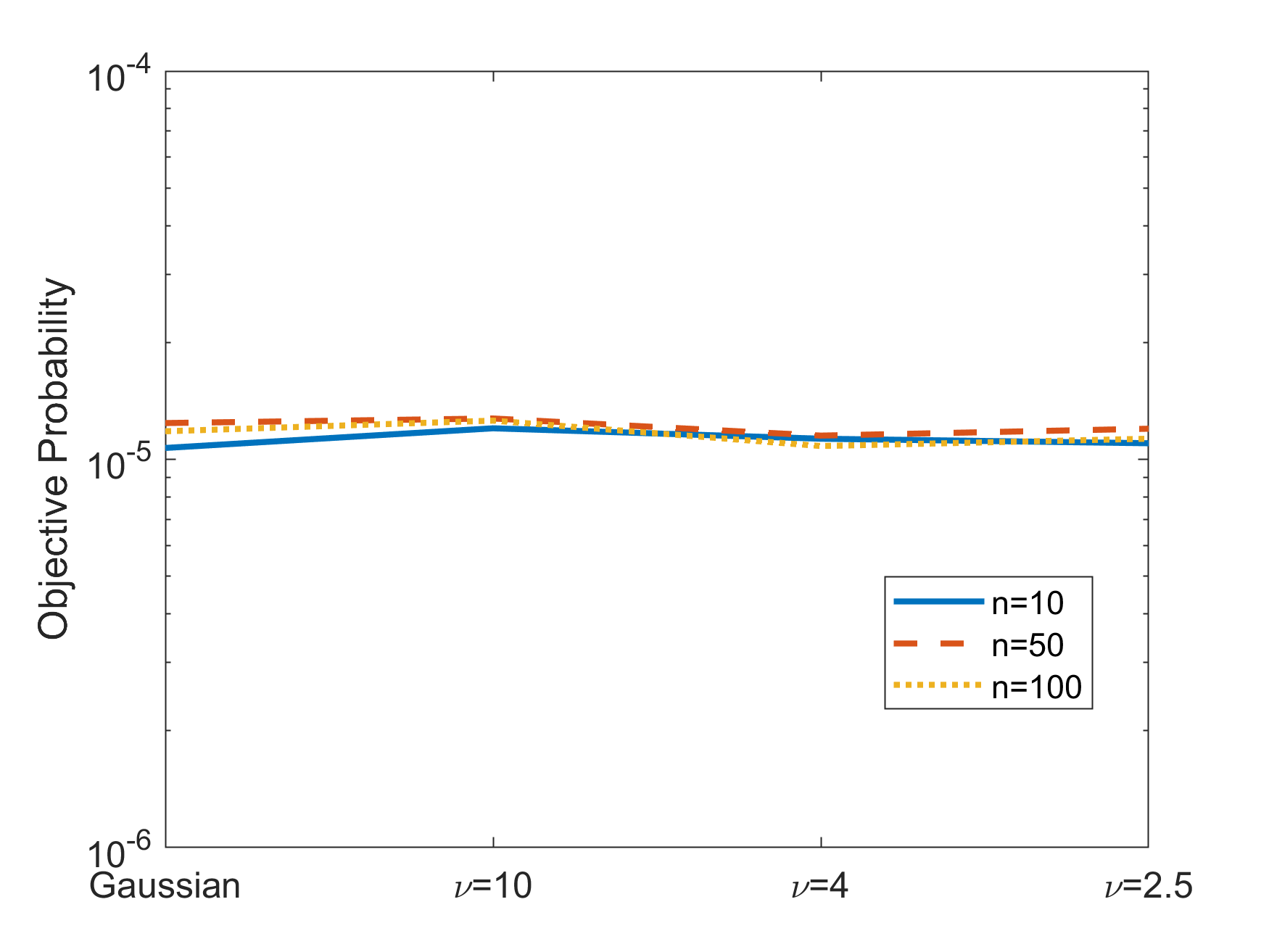}} \quad 
     \subfloat[\label{fig:validate_error} Relative errors given by $|p-p_u|/p$.]{\includegraphics[width=0.45\textwidth]{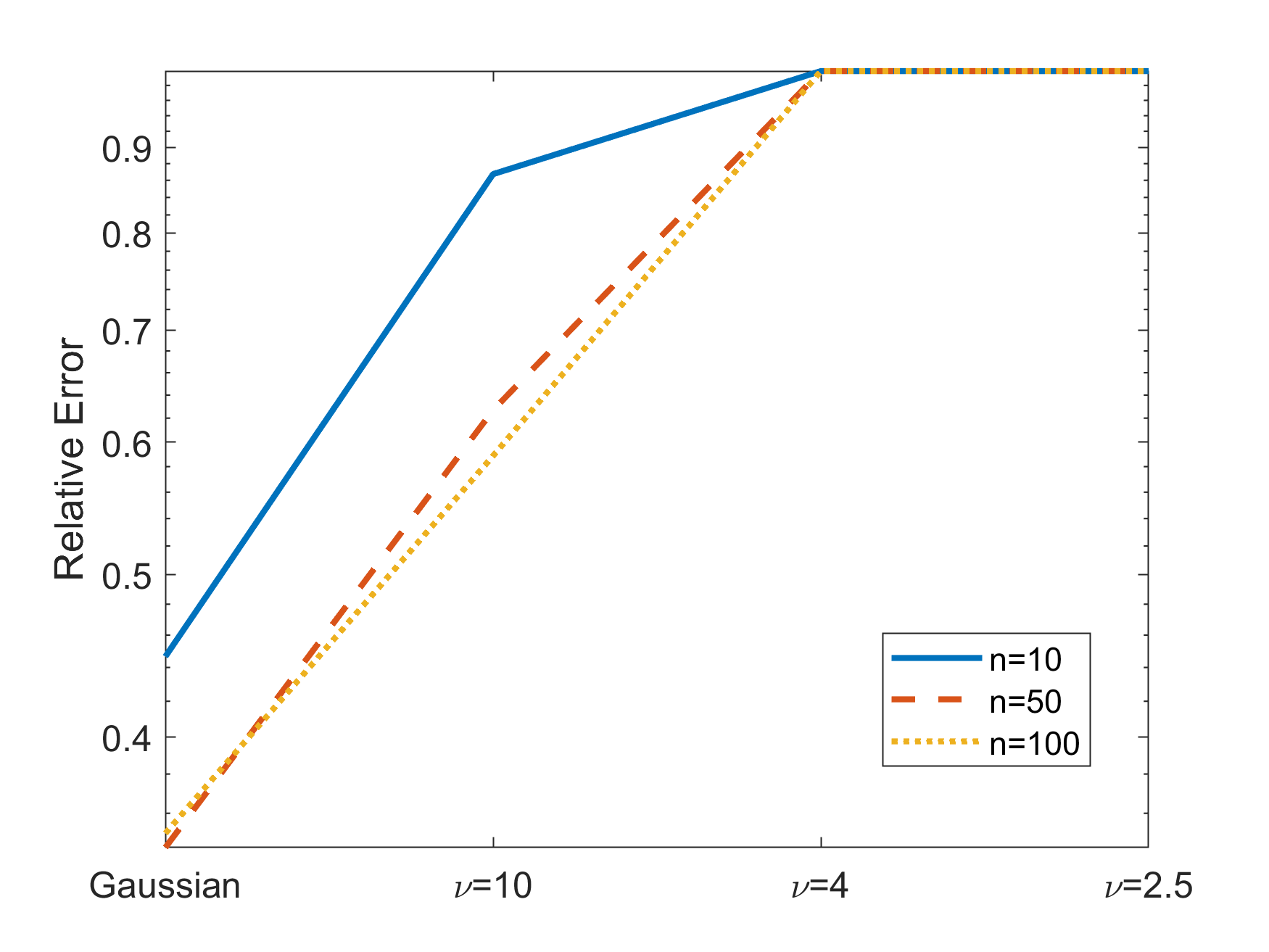}}
     \caption{Probability estimation with untruncated and truncated distributions. Each $p$ approximately has the same magnitude.  }
     \label{fig:figure_t_group2}
 \end{figure}
 
 

\subsection{Impacts of Tail Heaviness in Data-Driven Estimation.} \label{sec:tail_heaviness}
We investigate errors in rare-event probability estimation in relation to data size, for various tails and rare-event probability levels. We consider $n=10,50,100$ and $p\approx10^{-5},10^{-6},10^{-7}$. For each problem with a pair of $n$ and $p$,  we independently generate data sets of $X_{i}$'s with a sample size $N$. We use $p(\hat F_N)$ to approximate $p$, where $\hat F_N$ is the empirical distribution constructed from the size-$N$ data. For each setup, we repeat the experiment 100 times and report the relative errors (the ratio of approximation error to the ground truth) of these 100 experimental outcomes by using box plots. Each plot will show the median as a central red mark, and the top and bottom edges of the boxes will represent the 75\% and 25\% percentiles. The whiskers will extend outside the box to the extreme data within the range of 1.5 times of the box height. Data outside this range will be considered as outliers represented as a red cross. Reliability of the estimates will be determined by the coverage of the box. When the box covers 0, a smaller height suggests more concentrated estimates, which is a sign that the approximation is relatively accurate. If the box fails to cover 0 (the box would mostly be close to $-1$ in this case), then it means the approximation is poor due to insufficiency of using the empirical distribution with the given data size. 




Like in Section \ref{sec:num_validation}, variance reduction is needed to speed up computation. For heavy-tailed problems, we apply conditional Monte Carlo \cite{asmussen2006improved} when the sample size for constructing the empirical distribution is sufficient (more than $10^6$ samples). Since this method relies on the equation $P(S_n > \gamma)=n P(S_n > \gamma, X_n= \max\{X_1,...,X_n\})$, which does not hold when $X_i$ is discrete, the estimator is biased when simulations are based on the empirical distribution. However, as demonstrated in Appendix~\ref{sec:append_bias_cond_MC}, this bias is negligible. When we run crude Monte Carlo, we estimate $p$ from $10^{8}$ samples drawn from the empirical distribution; for the conditional Monte Carlo cases, we use $10^7$ samples. For problems involving light-tailed distributions, the conditional Monte Carlo method is also employed when there is a sufficient sample size. Note that,  despite the availability of more efficient alternatives, we select the latter method to exclude the influence of variations among different estimators. This decision does not affect the essential findings of our experiments, as the conclusions would remain consistent regardless of the use of a more efficient approach.


We unfold our experiment results in three directions. Section \ref{sec:exp_light_vs_heavy} compares heavy- with light-tailed distributions. Section \ref{sec:exp_heavy_heavier} compares distributions within the same family but with different tail heaviness. Section \ref{sec:exp_slow_varying} investigates the impacts of slowly varying functions and distributions in between light and power-law tails.

\subsubsection{Light versus Heavy Tail.}\label{sec:exp_light_vs_heavy}
\begin{figure}[t]
      \centering
     \subfloat[$n=10$, $p\approx10^{-5}$, $N=10^3$\label{fig:lvh_n10p5N3}
     ]{\includegraphics[width=0.45\textwidth]{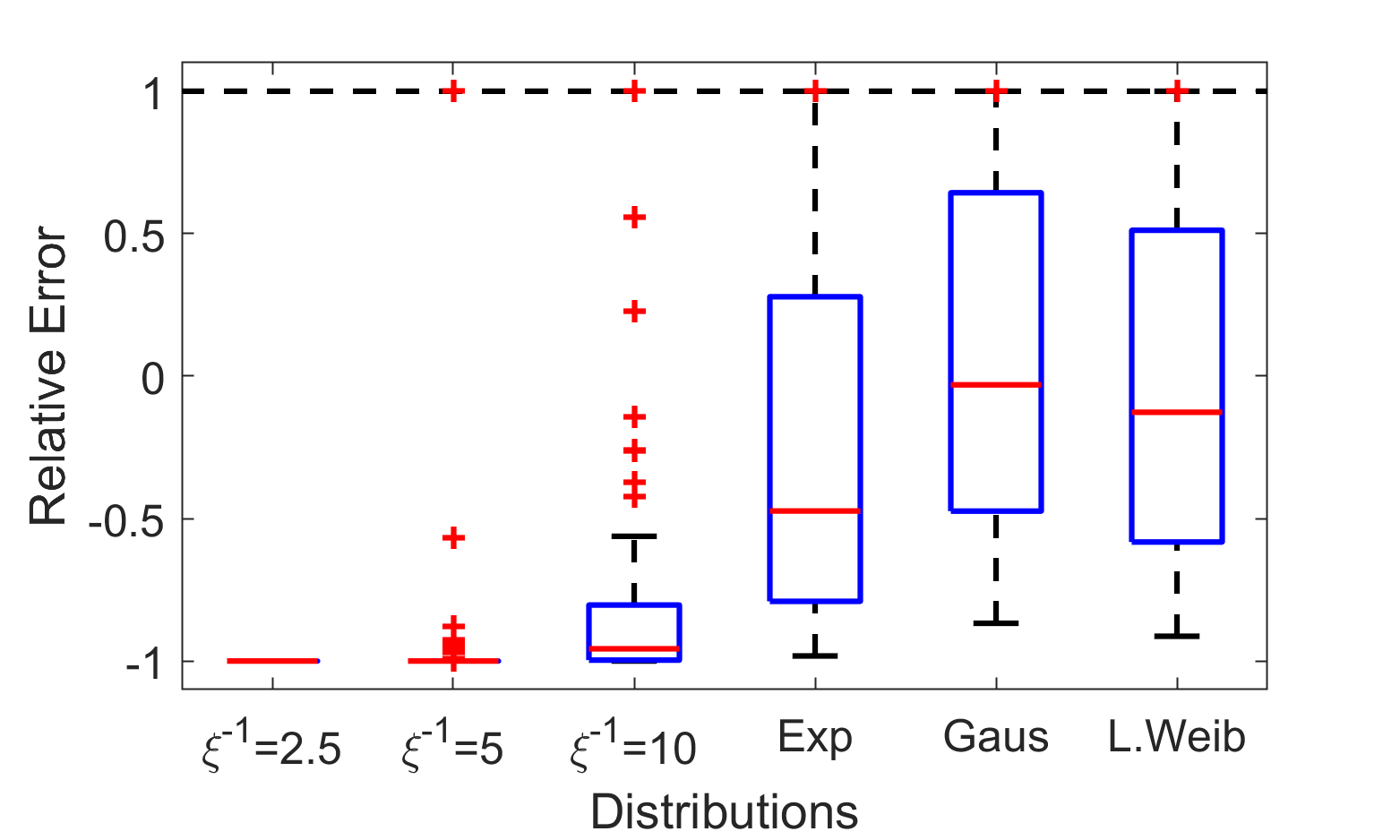}} \quad 
     \subfloat[$n=10$, $p\approx10^{-5}$, $N=10^5$\label{fig:lvh_n10p5N5}]{\includegraphics[width=0.45\textwidth]{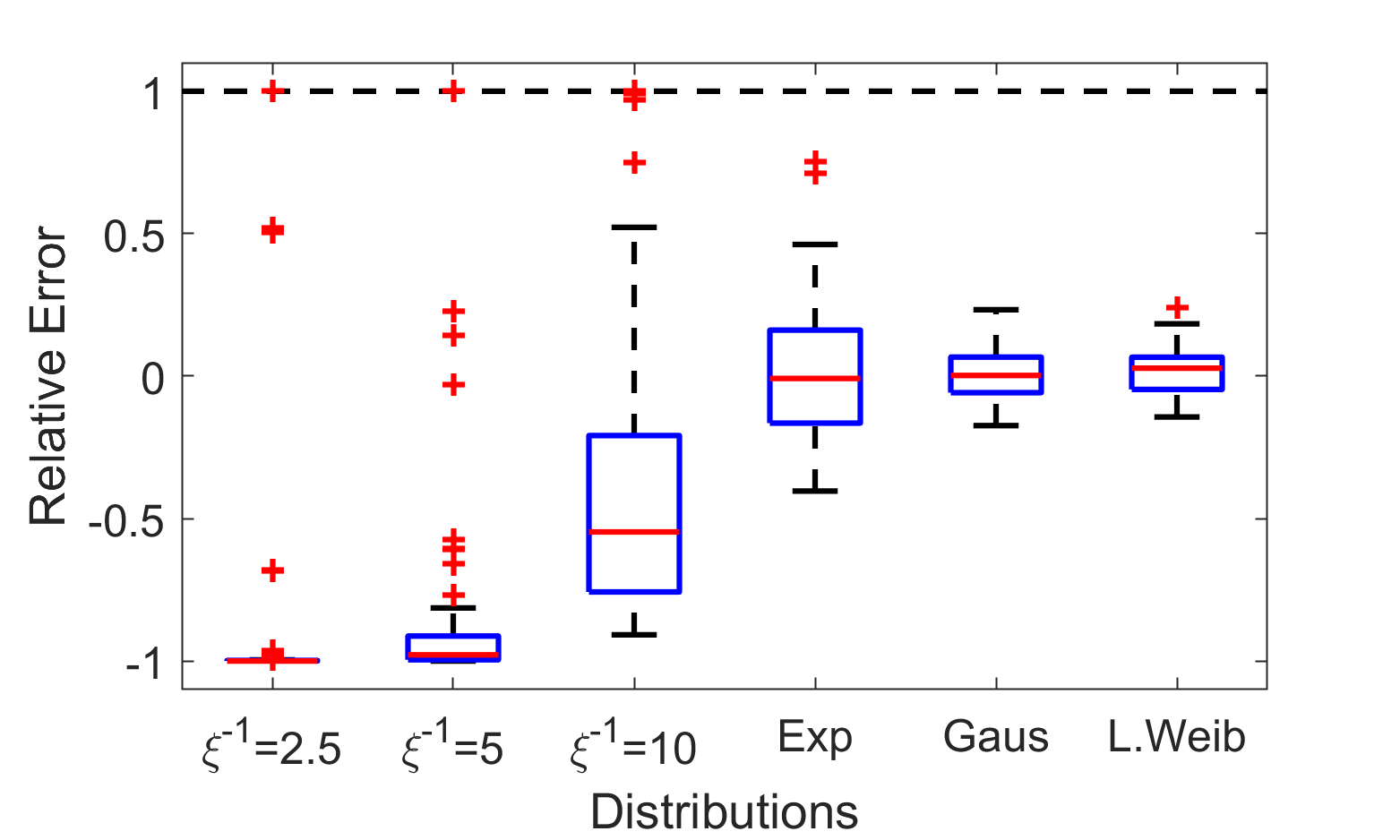}} \quad
      \subfloat[$n=10$, $p\approx10^{-5}$, $N=10^6$ \label{fig:lvh_n10p5N6}
     ]{\includegraphics[width=0.45\textwidth]{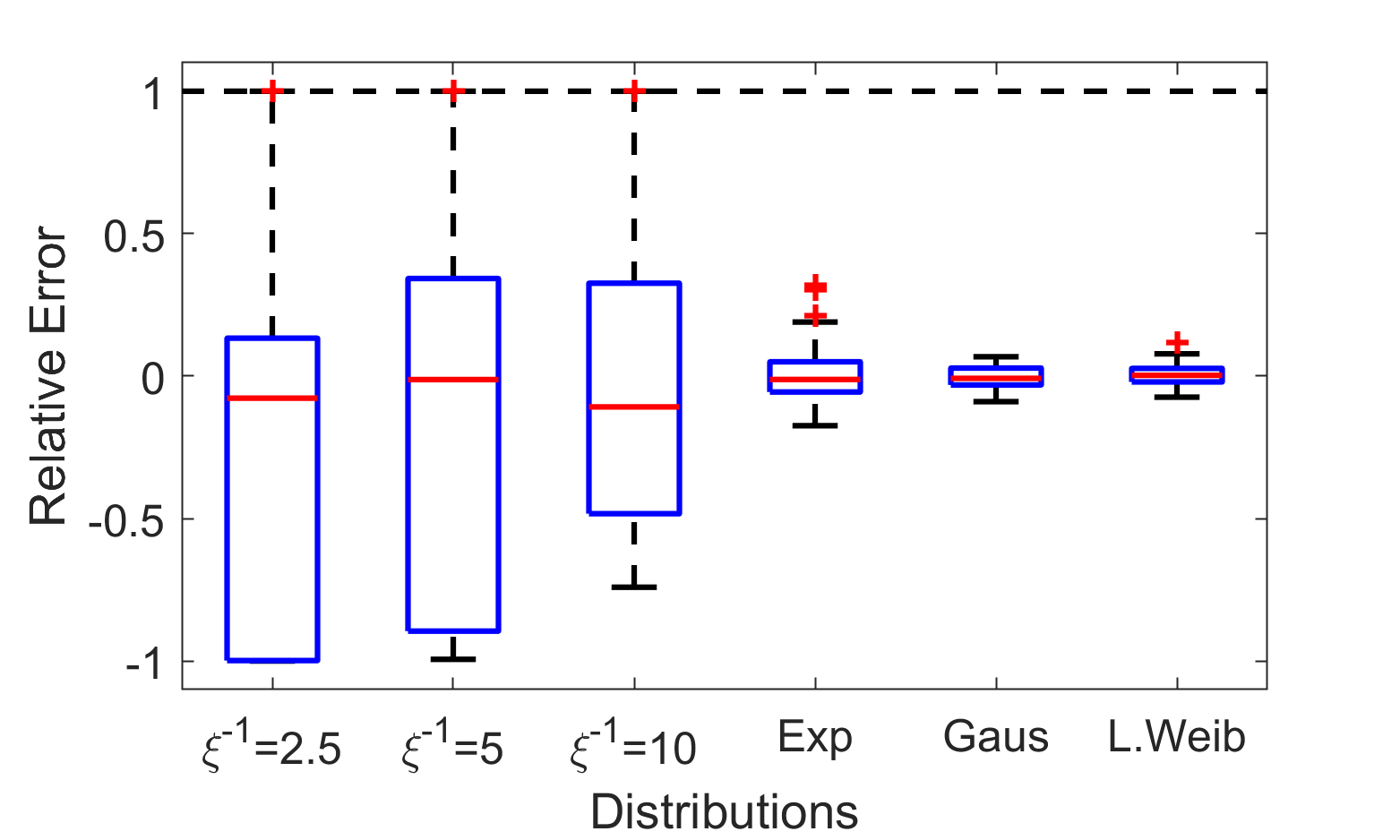}} \quad 
     \subfloat[$n=100$, $p\approx10^{-7}$, $N=10^3$\label{fig:lvh_n100p7N3}]{\includegraphics[width=0.45\textwidth]{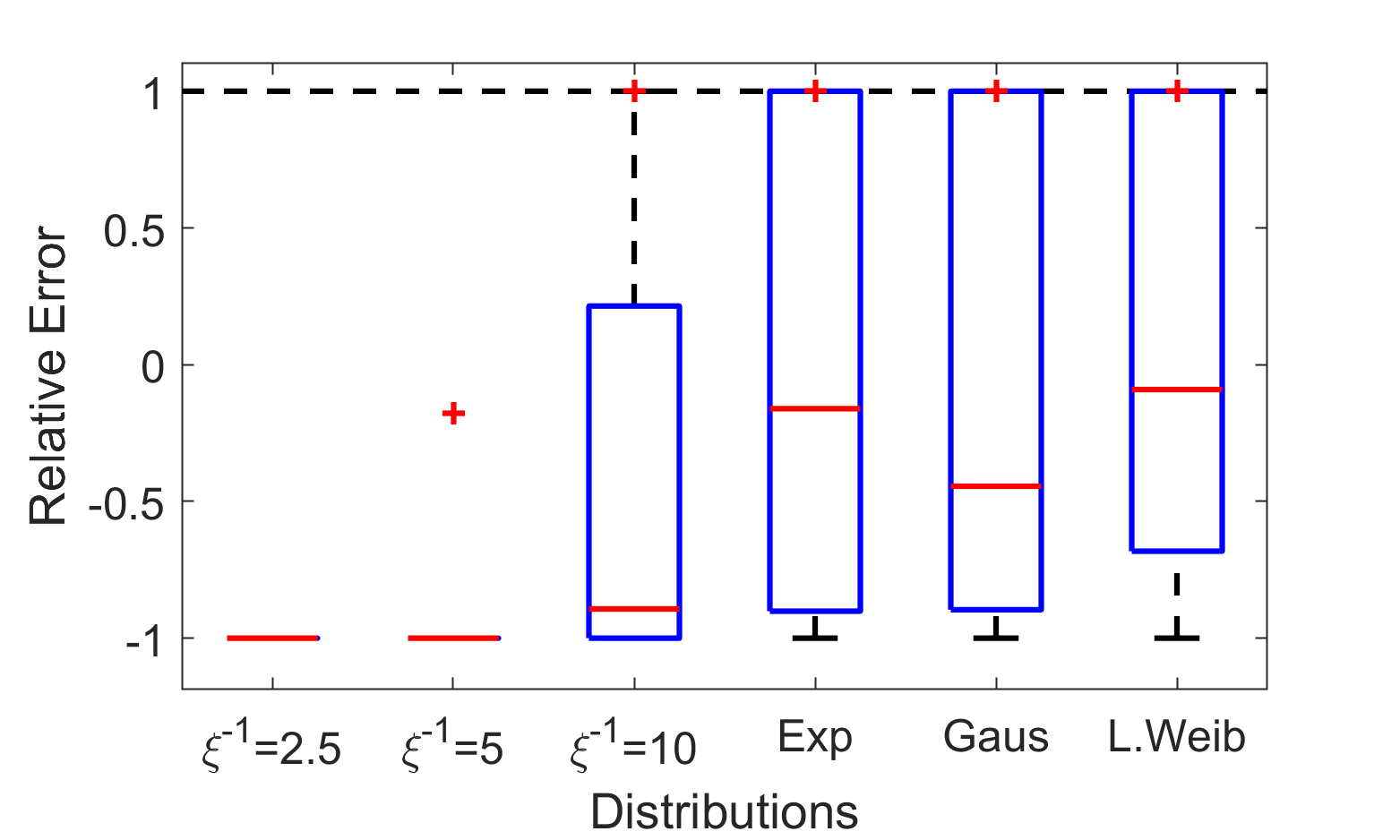}} \quad
     \subfloat[$n=100$, $p\approx10^{-7}$, $N=10^4$ \label{fig:lvh_n100p7N4}
     ]{\includegraphics[width=0.45\textwidth]{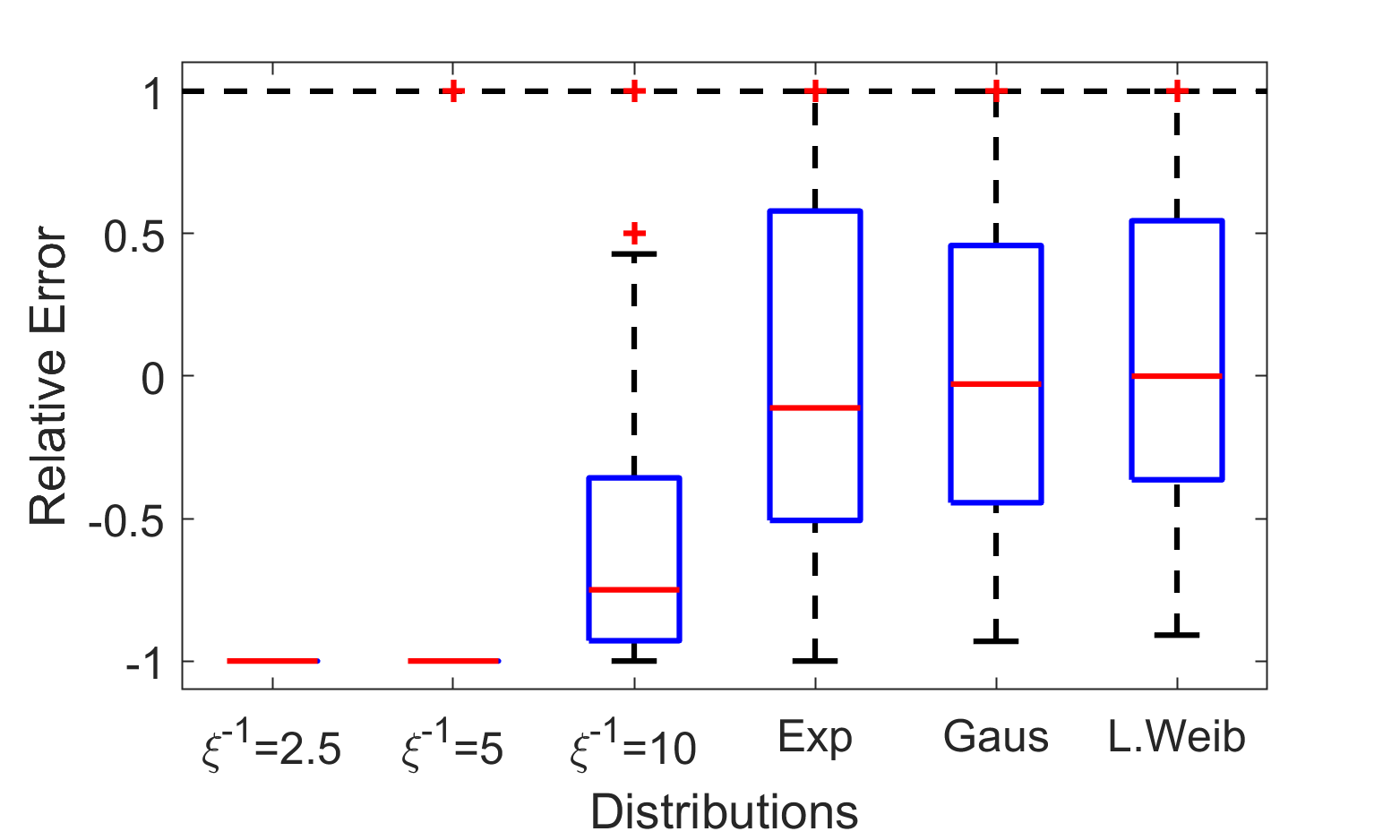}} \quad 
     \subfloat[$n=100$, $p\approx10^{-7}$, $N=10^8$\label{fig:lvh_n100p7N8}]{\includegraphics[width=0.45\textwidth]{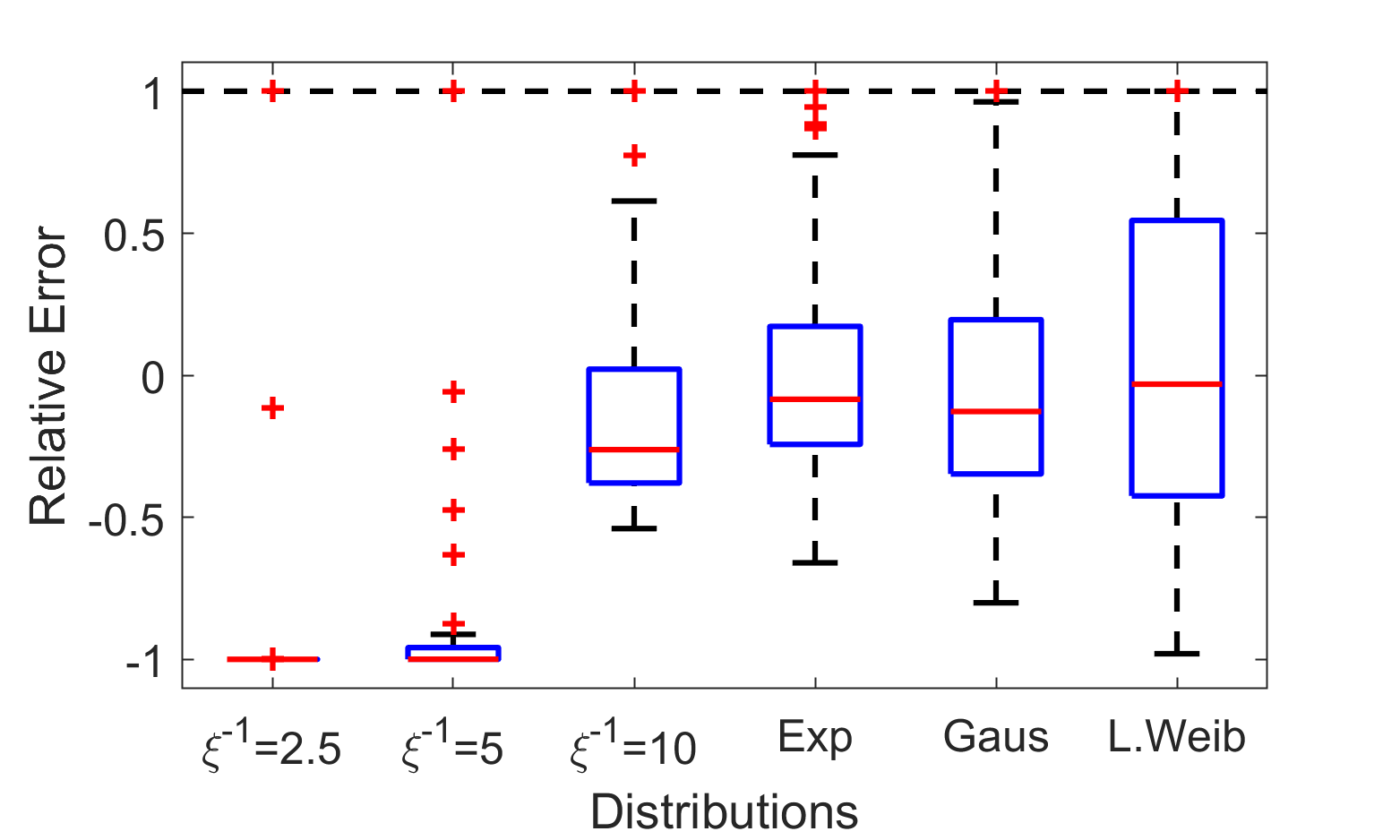}}
     \caption{Relative errors (the ratios of error to the ground truth). The compared tail distributions include generalized Pareto distributions with various tail indices (i.e. $\xi^{-1}$) and light-tailed distributions, including exponential (Exp), Gaussian (Gaus), and light-tailed Weibull (L.Weib).}
     \label{fig:light_vs_heavy}
 \end{figure}
 


Figures \ref{fig:lvh_n10p5N3}, \ref{fig:lvh_n10p5N5}, and \ref{fig:lvh_n10p5N6} show the comparisons among generalized Pareto distributions with tail indices $\xi^{-1}=2.5,5,10$ (heavy tail), and exponential, Gaussian and light-tailed Weibull (light tail). We set $n=10$ and $p\approx 10^{-5}$. In Figure \ref{fig:lvh_n10p5N3}, when the sample size is relatively small ($N=10^3$), estimates in the light-tailed cases are accurate, evidenced by the coverage of 0 by the boxes. However, heavy-tailed estimates significantly under-estimate the rare-event probability and their boxes fail to cover the truth in all three cases. As we increase the sample size $N$ to $10^5$ in Figure \ref{fig:lvh_n10p5N5}, the light-tailed cases start to have highly reliable estimates, which concentrate within $\pm 0.5$ relative error bounds. Meanwhile, the heavy-tailed boxes still cannot cover the truth. In Figure \ref{fig:lvh_n10p5N6}, we further increase the sample size to $10^6$ and observe that the heavy-tailed cases have similar performance as the light-tailed cases with $N=10^3$ in Figure \ref{fig:lvh_n10p5N3}, where the boxes barely cover the truth. On the other hand, with $N=10^6$ samples, the light-tailed estimates are already highly accurate with mostly $\pm 0.2$ relative errors.

Similar observations are found in Figures \ref{fig:lvh_n100p7N3}, \ref{fig:lvh_n100p7N4}, and \ref{fig:lvh_n100p7N8}, where we set $n=100$ and $p\approx 10^{-7}$. As we start with a small sample size $N=10^3$ in Figure \ref{fig:lvh_n100p7N3}, the light-tailed estimates are inaccurate (the box edges are close to $\pm 1$) but their boxes are capable of covering the truth, while the heavy-tailed estimates tend to under-estimate the probability. Even though when $\xi^{-1}=10$, the heavy-tailed box covers the truth, the median of the estimates is close to $-1$, which indicates more than half of the estimates have severe under-estimation. When the sample size is increased to $N=10^4$ in Figure \ref{fig:lvh_n100p7N4}, the light-tailed estimates become more concentrated. The box edges for the light-tailed cases are around $\pm 0.5$. For the heavy tail cases, all estimates have close to $-1$ relative error when $\xi^{-1}=2$ or $5$, while the $\xi^{-1}=10$ case has more concentrated box edges but still tend to under-estimate (median close to $-0.7$ and top edge lower than 0). Even when we increase the sample size to $N=10^8$ in Figure \ref{fig:lvh_n100p7N8}, we observe that the heavy-tailed cases with $\xi^{-1}=2$ or $5$ still cannot obtain reasonable estimates, where more than 75\% of the estimates have relative errors close to $-1$.

These results indicate that problems associated with heavy tails are more prone to under-estimation than light tails with the same amount of data, consistent with our discussions in Sections~\ref{sec:setting} and \ref{sec:data driven scenario}. Note that, in many heavy-tailed cases (e.g., Figure~\ref{fig:lvh_n100p7N4}), the relative errors approach $-1$, suggesting that the estimates are significantly smaller than the actual values which coincide with Theorem~\ref{heavy}. 

\subsubsection{Impacts of Tail Index in Heavy Tail.}\label{sec:exp_heavy_heavier}
\begin{figure}[t]
     \centering
     \subfloat[Pareto, $n=50$, $p\approx10^{-5}$, $N=10^6$\label{fig:hh_n50N6p5_p} 
     ]{\includegraphics[width=0.45\textwidth]{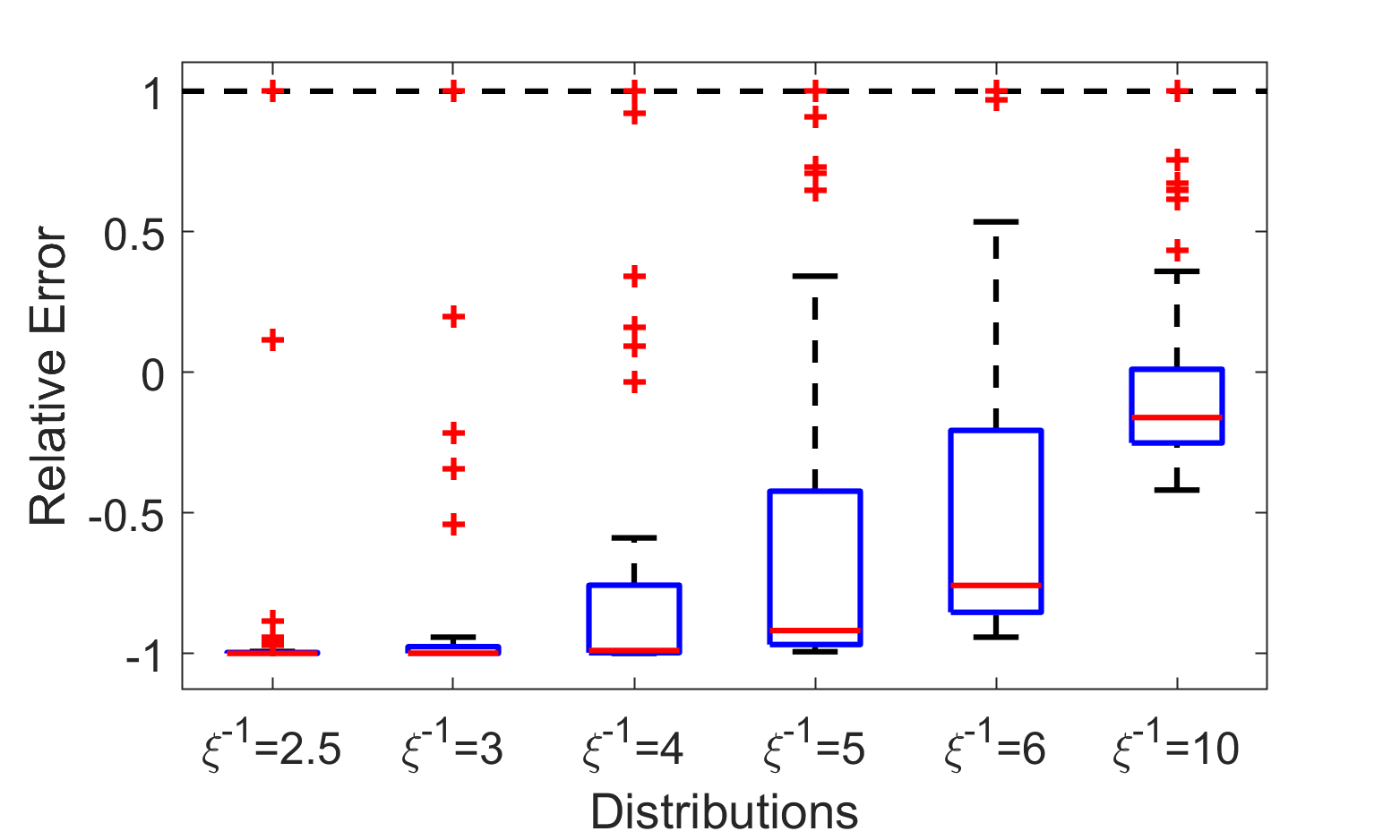}} \quad    
     \subfloat[Pareto, $n=50$, $p\approx10^{-6}$, $N=10^7$\label{fig:hh_n50N7p6_p}]{\includegraphics[width=0.45\textwidth]{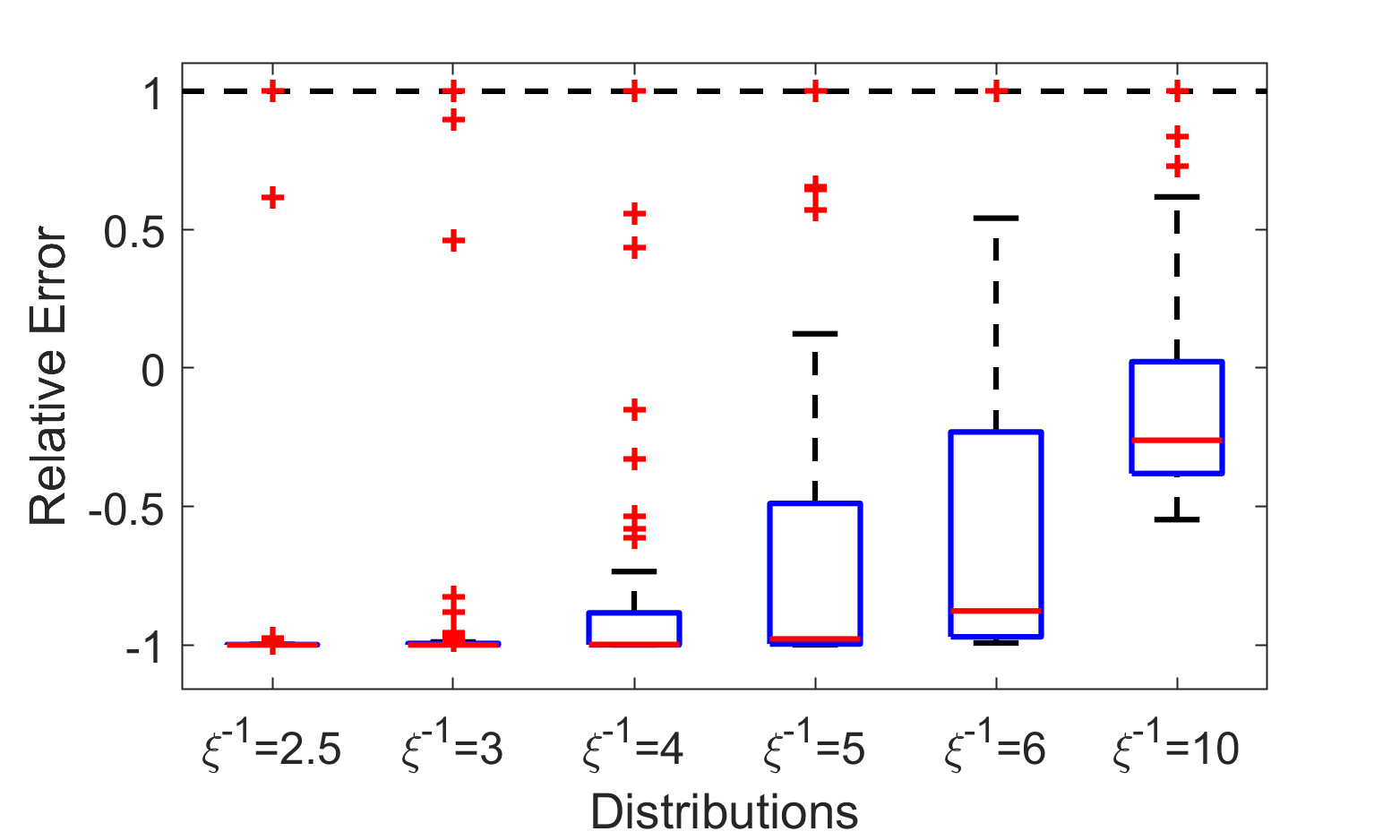}} \quad
     \subfloat[Student's $t$, $n=50$, $p\approx10^{-5}$, $N=10^6$\label{fig:hh_n50N6p5_t}
     ]{\includegraphics[width=0.45\textwidth]{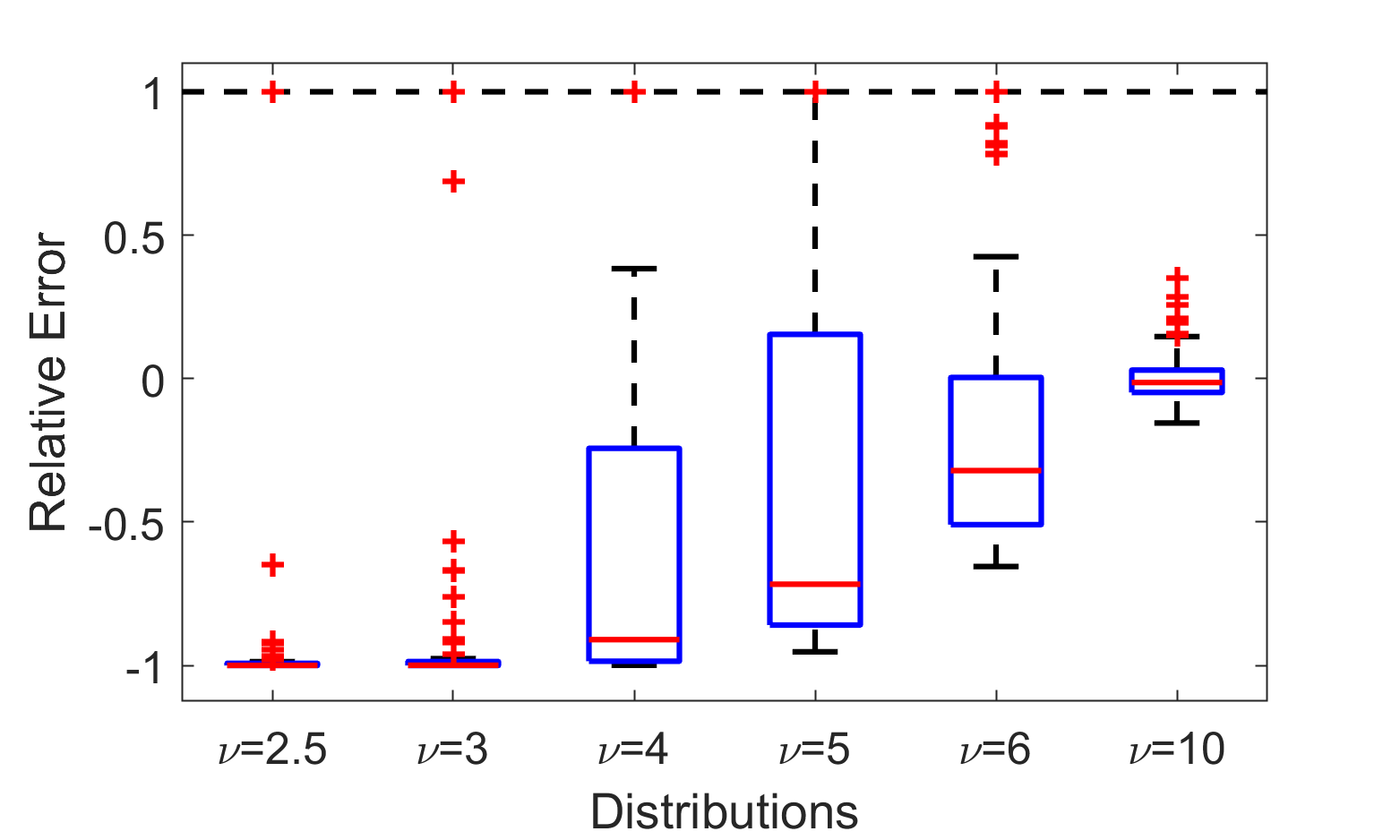}} \quad 
     \subfloat[Student's $t$, $n=50$, $p\approx10^{-6}$, $N=10^7$\label{fig:hh_n50N7p6_t}]{\includegraphics[width=0.45\textwidth]{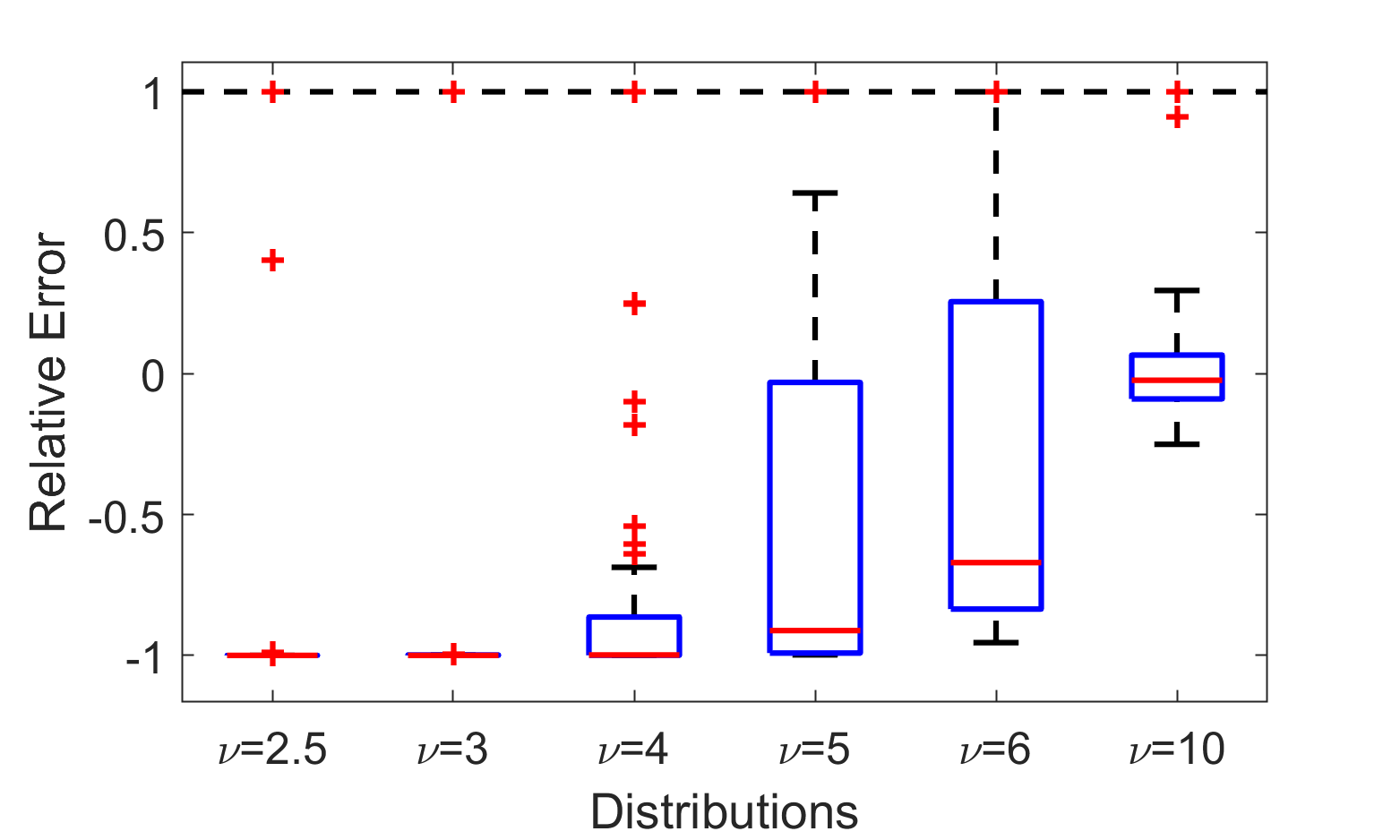}}
     \caption{Relative errors (the ratios of error to the ground truth) in simulation results with empirical distribution. The tail distributions include generalized Pareto distributions and $t$-distributions with various tail indices (i.e. $\xi^{-1}$ for Pareto and $\nu$ for Student's $t$).}
     \label{fig:heavy_heavier}
 \end{figure}
 
 In addition to the comparison between heavy-tailed and light-tailed distributions, we find that heavy-tailed distributions with different tail heaviness exhibit different estimation errors. For instance, in Figures \ref{fig:lvh_n100p7N4} and \ref{fig:lvh_n100p7N8}, we observe that when the tail is lighter with $\xi^{-1}=10$, the estimates have better accuracy than those with a heavier tail (e.g. $\xi^{-1}=2$ or $5$). In this subsection, we drill on this phenomenon further and investigate the impacts of tail heaviness within parametric families. 
 
 We consider heavy-tailed distributions of Pareto and Student's $t$, with different tail indices. We set $n=50$ and consider two settings of $p\approx10^{-5}$ and $p\approx10^{-6}$. Figure \ref{fig:heavy_heavier} shows that a heavier tail would increase the data requirement for obtaining accurate estimates. In particular, in the figure, the distributions are ordered from left to right with decreasing tail heaviness. With $n=50$, $p\approx10^{-5}$ and $N=10^6$ in Figures \ref{fig:hh_n50N6p5_p} and \ref{fig:hh_n50N6p5_t}, the Pareto and $t$-distribution cases show very similar patterns: when the tail is relatively heavy (tail index equals 2.5 or 3), more than 75\% of the estimates have approximately $-1$ relative errors, which represent significant under-estimation. When the tail index increases from 4 (to 5) to 6, we observe that the median of the estimates gradually grows from near $-1$ to $-0.7$ in the Pareto case (Figure \ref{fig:hh_n50N6p5_p}) and $-0.3$ in the Student's $t$ case (Figure \ref{fig:hh_n50N6p5_t}). Finally, when tail is relatively light (tail index equals 10), more than half of the estimates are closely concentrated around 0, and therefore are much more reliable.
 
 Similar observations are found in Figures \ref{fig:hh_n50N7p6_p} and \ref{fig:hh_n50N7p6_t} as well, where we have $n=50$, $p\approx10^{-6}$ and $N=10^7$. Under this setting, the estimates fail badly when the tail indices are smaller than (or equal to) 4 in both the Pareto and $t$-distribution cases, shown by the near $-1$ median of the relative errors. As we increase the tail index, which leads to a lighter tail, the estimates improve and finally the box edges concentrate with $\pm 0.5$ relative errors in the Pareto case (Figure \ref{fig:hh_n50N7p6_p}) and $\pm 0.1$ in the Student's $t$ case (Figure \ref{fig:hh_n50N7p6_t}). These comparisons within the same parametric families further support the findings in Section \ref{sec:exp_light_vs_heavy}, demonstrating that problems with heavier tails lead to greater under-estimation. 

 
\subsubsection{Between Light and Power-Law Tails.}\label{sec:exp_slow_varying}

Next we investigate the performance of non-power-law tails. Note that in Figures \ref{fig:hh_n50N7p6_p} and \ref{fig:hh_n50N7p6_t}, with tail index equal to 6, the Pareto box fails to cover the truth, whereas the $t$-distribution box has a larger range and contains 0. Since they have the same tail index, it appears that the slowly varying function inside the distribution affects the coverage of the estimates. This subsection studies this issue even further, by considering additionally subexponential distributions without power-law tails, namely log-normal and heavy-tailed Weibull distributions.




\begin{figure}[t]
      \centering
     \subfloat[$n=50$, $p\approx10^{-5}$, $N=10^5$\label{fig:slow_n50N5p5}
     ]{\includegraphics[width=0.45\textwidth]{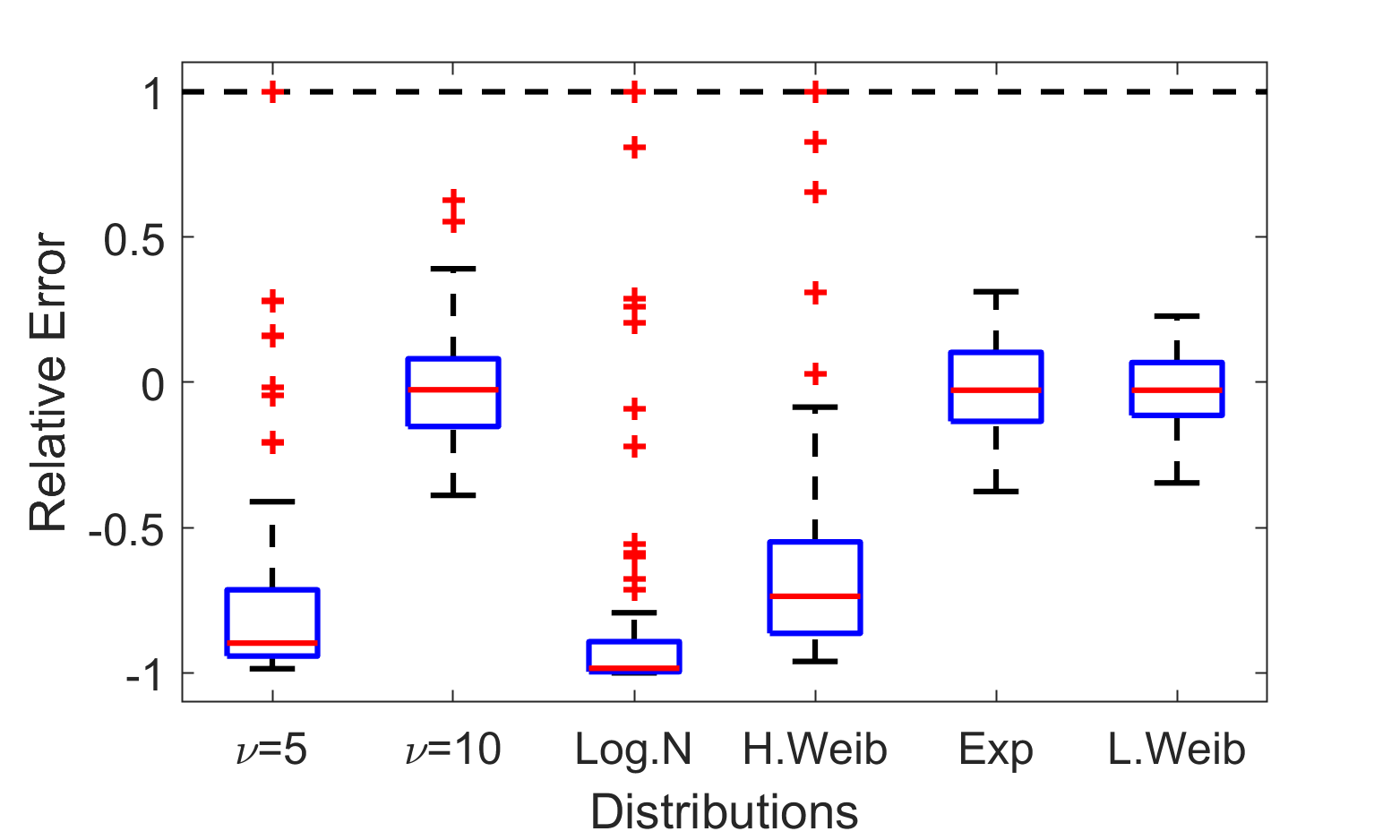}} \quad 
     \subfloat[$n=50$, $p\approx10^{-7}$, $N=10^7$\label{fig:slow_n50N7p7}]{\includegraphics[width=0.45\textwidth]{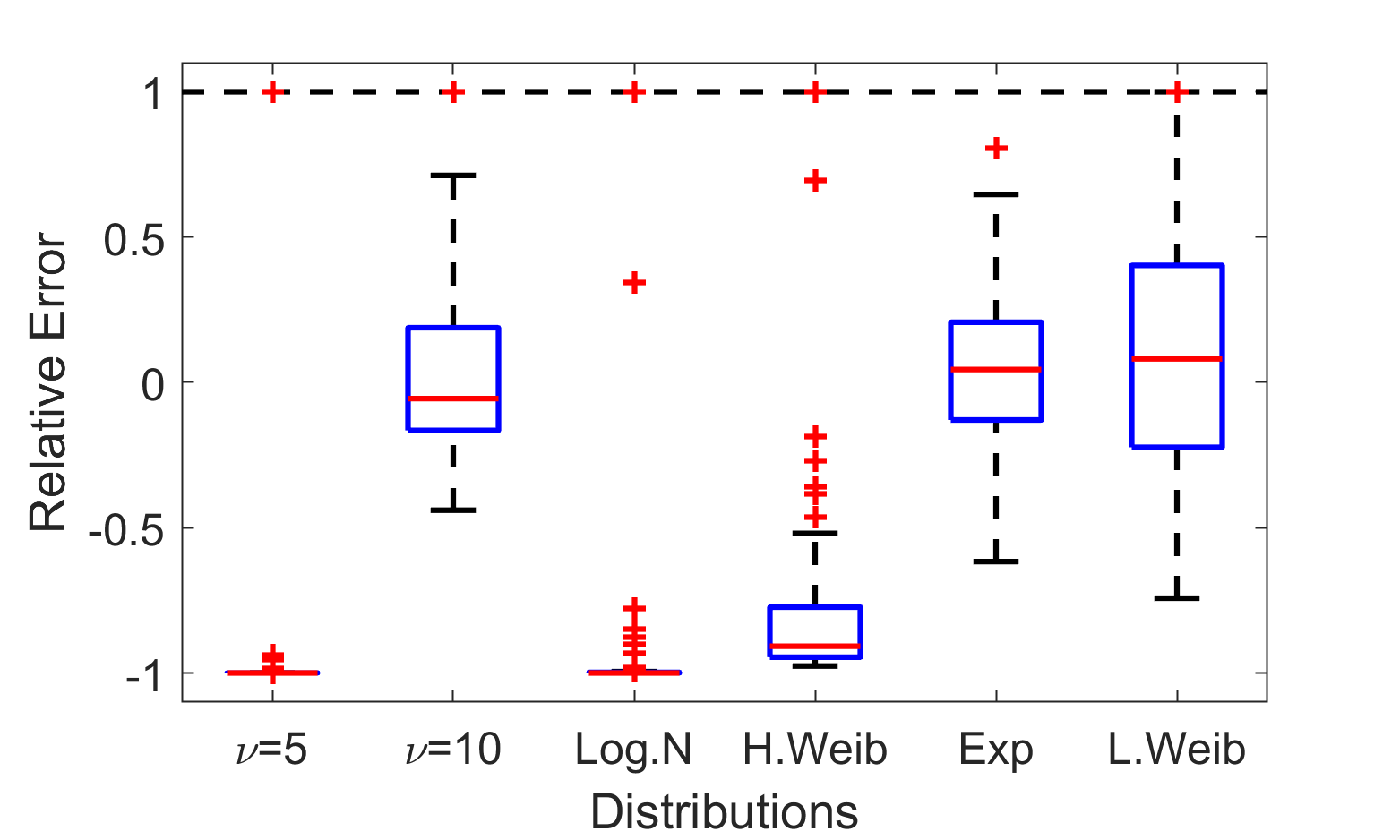}} \quad
     \subfloat[$n=100$, $p\approx10^{-5}$, $N=10^5$\label{fig:slow_n100N5p5}
     ]{\includegraphics[width=0.45\textwidth]{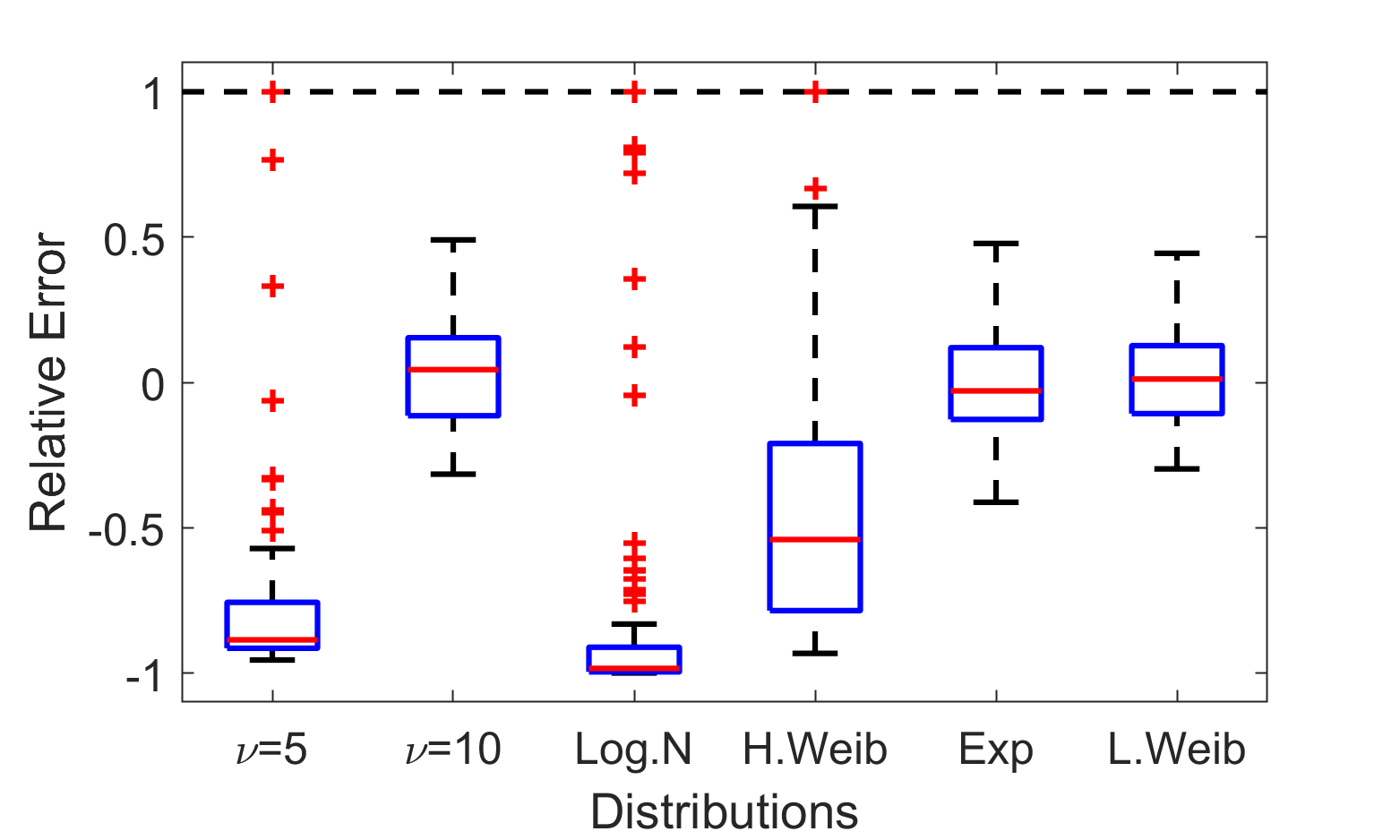}} \quad 
     \subfloat[$n=100$, $p\approx10^{-7}$, $N=10^7$\label{fig:slow_n100N7p7}]{\includegraphics[width=0.45\textwidth]{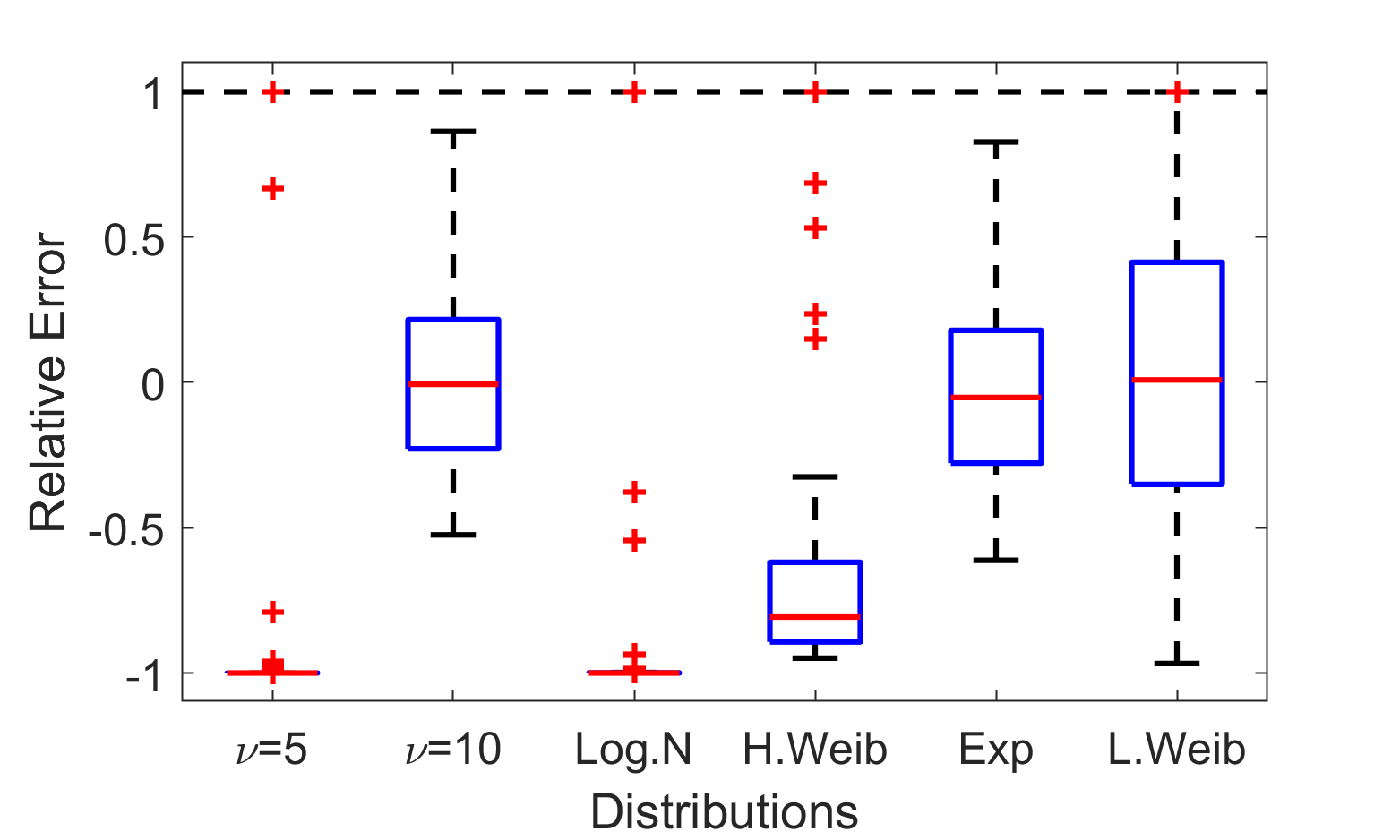}}
     \caption{Relative errors (the ratios of error to the ground truth) in simulation results with empirical distribution. The tail distributions include $t$-distributions with various choices of $\nu$, log-normal distribution (Log.N), heavy-tailed Weibull (H.Weib) distribution and light-tailed distributions, including exponential (Exp) and light-tailed Weibull (L.Weib).}
     \label{fig:slow_varying}
 \end{figure}

In Figure \ref{fig:slow_varying}, we order the distributions from left to right with decreasing tail heaviness (in the asymptotic sense). In Figure \ref{fig:slow_n50N5p5}, we use $n=50$, $p\approx 10^{-5}$ and $N=10^5$. The light-tailed cases (Exp and L.Weib) exhibit accurate estimates that are distributed within approximately $\pm 0.4$ relative errors. For heavy-tailed cases, with $\nu=5$ the box fail to cover the truth and more than 75\% estimates have smaller than $-0.7$ relative errors, while with $\nu=10$ the estimates are well concentrated within $\pm 0.5$ relative errors. We note that for the subexponential distributions, whose tails are asymptotically lighter than $\nu=10$, the estimates are in fact much worse. In particular, we observe that the estimates for the log-normal case are even worse than the $\nu=5$ estimates, as the top edge of box is much closer to $-1$. The estimates for heavy-tailed Weibull are better than the log-normal ones, but their boxes still fail to cover the truth. We present similar results in Figures \ref{fig:slow_n50N7p7}, \ref{fig:slow_n100N5p5}, and \ref{fig:slow_n100N7p7}. In these figures, we observe that the estimates for log-normal and heavy-tailed Weibull distributions have consistently worse estimates than the $t$-distributions with $\nu=10$, even though their tails do not follow power-law and hence are lighter.

These experiments indicate that tails falling between Pareto and light tails can lead to significant under-estimation issues like Pareto tails. Moreover, the comparative estimation performances relative to data size for these tails appear subtle and intuitive notion of tail heaviness alone might not adequately inform estimation reliability.


\subsection{Quantifying Tail Uncertainty Using Bootstraps.} \label{sec:bootstrap_exp}

Next we investigate the use of bootstrapping to assess input uncertainty, in particular whether bootstrap confidence intervals (CIs) can provide valid coverage. In the following, we consider two bootstrap schemes: a) nonparametric bootstrap, and b) bootstrap assisted with generalized Pareto tail fitting.


\subsubsection{Nonparametric Bootstrap.} \label{sec:bootstrap_empirical}


 We create bootstrapped empirical distributions through repeated resampling with replacement, with resample size equal to the original data size. The resampling is repeated $B=100$ times, where each resample is used as the input model to drive the rare-event estimation. The bootstrap CI is then constructed using the empirical quantiles of the resample estimates. 
 
 We present our experimental results on two problems, one representing a heavy-tailed scenario and another a light-tailed scenario. In particular, we consider $X_i$ with a Gaussian distribution for the light-tailed case and a $t$-distribution with $\nu=4$ for the heavy-tailed case. In both scenarios, we set the rare-event probabilities around $10^{-5}$. We focus on evaluating the efficacy of the bootstrap CIs, specifically whether they achieve the desired coverage level, which is set at 95\% for our experiments. To obtain the coverage levels, we make 100 experimental replications, where in each replication we construct a bootstrap CI and assess whether it covers the true probability. 



\begin{figure}[t]
     \centering
     \subfloat[\label{fig:fig_boot_coverage}     ]{\includegraphics[width=0.45\textwidth]{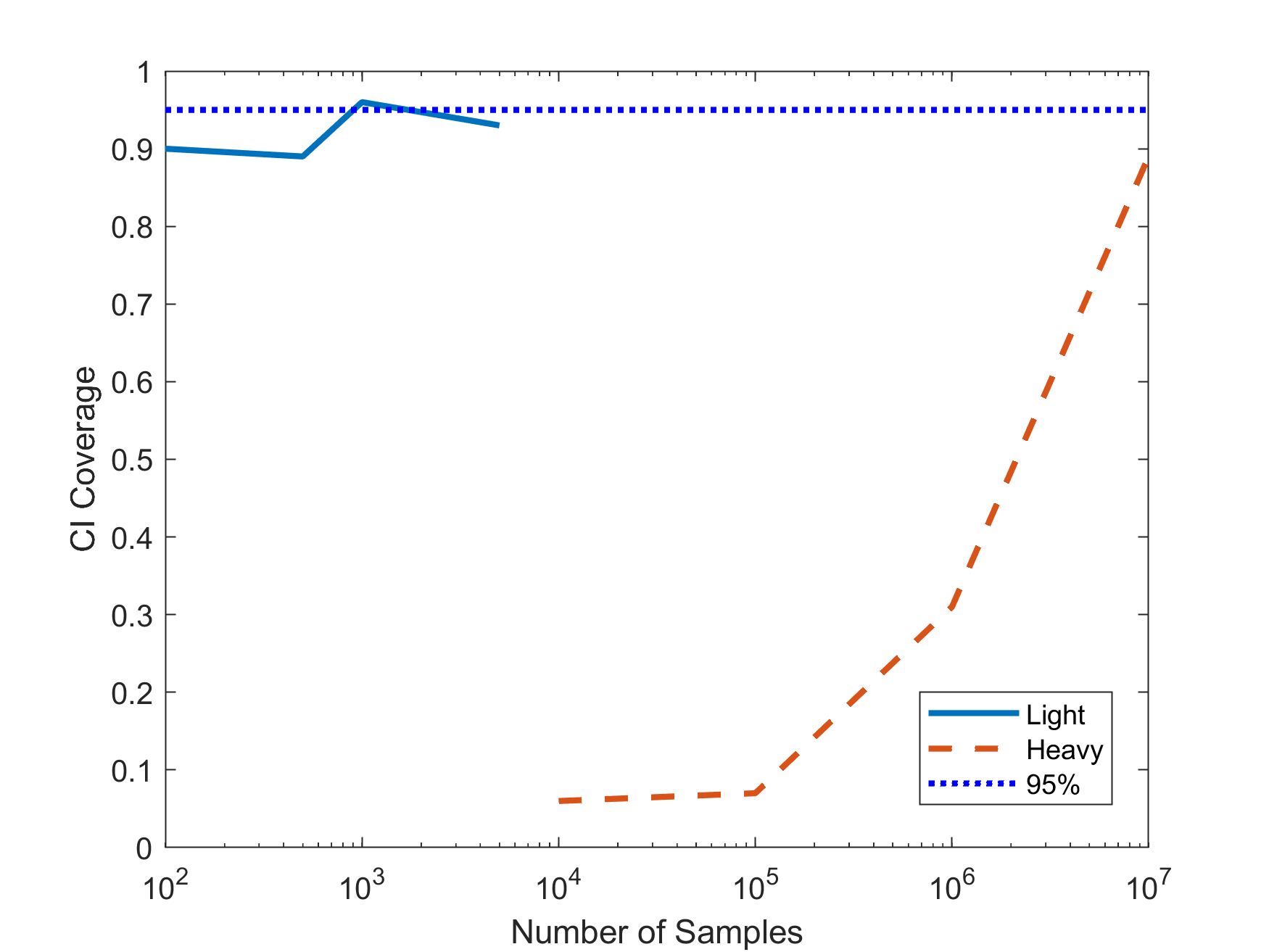}} \quad 
     \subfloat[\label{fig:fig_boot_width}]{\includegraphics[width=0.45\textwidth]{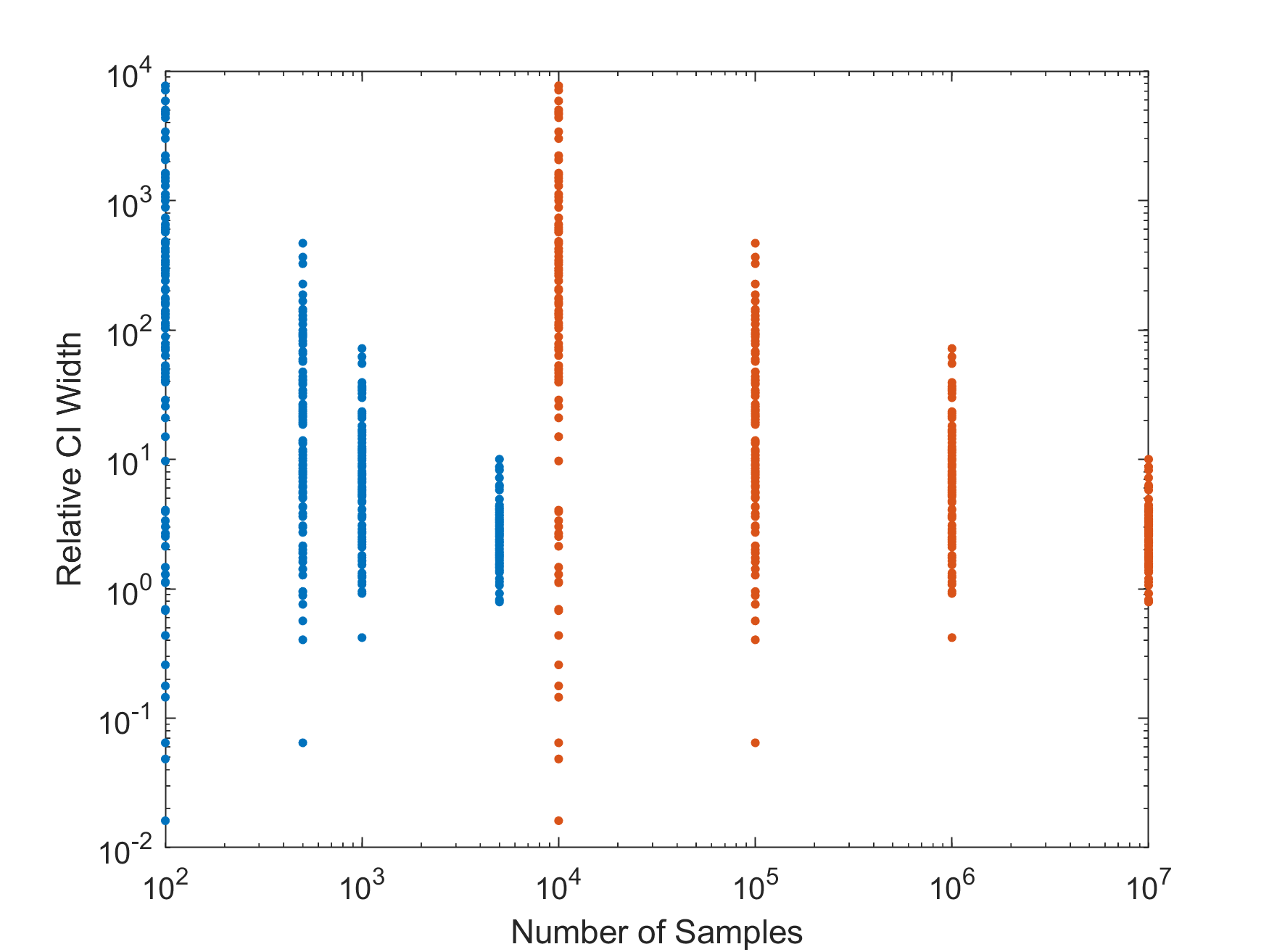}}
     \caption{Results of plain bootstrap with varying sample sizes based on 100 replications for each sample size under two settings: (1) Light-tailed case (Gaussian tail) with sample sizes ranging from $10^2$ to $10^4$ and (2) Heavy-tailed case (Student's $t$ tails, $\nu=4$) with sample sizes ranging from $10^4$ to $10^7$. (a) Coverages of the 95\% bootstrap CIs. (b) Distributions of CI width/target probability.}
     \label{fig:boot_figs_a}
 \end{figure}
 
Figure \ref{fig:boot_figs_a} presents our results. Figure \ref{fig:fig_boot_coverage} shows that, for light-tailed problems, the coverage of the CI is above 90\% in each considered case (the blue solid line). The results indicate that the bootstrap CIs are valid, as they provide coverages close to the target level even when the number of samples is relatively small compared to the probability in question (for example, 100 samples versus a probability of $10^{-5}$). Moreover, we observe in Figure \ref{fig:fig_boot_width} that the CI width is relatively big compared to the estimated probability (for example, $10^4$ times larger than the target probability with 100 samples). While these wider intervals might seem overly cautious, they are important for effectively identifying instances where the probability estimate is unreliable. On the other hand, for the heavy-tailed case, Figure \ref{fig:fig_boot_coverage} shows that the bootstrap CI badly under-covers the truth when the number of samples is smaller than $10^7$. Compared to the light-tailed case, a sample size of $10^6$ is not small, given that the target probability is only around $10^{-5}$. 


The experimental results suggest that the bootstrap works well in light-tailed problems, but fails in heavy-tailed problems. This ties to our explanation in Section \ref{sec:setting} that the impact from tail uncertainty is more profound in the heavy-tailed case. The experiment suggests that in heavy-tailed problems, the lack of tail information not only causes problems in estimating the probability itself, but can also deem the assessment of input uncertainty very challenging.



\subsubsection{Bootstrap Using Generalized Pareto Distribution.} \label{sec:bootstrap_gpd}

We attempt to overcome the challenges in Section~\ref{sec:bootstrap_empirical} by using generalized Pareto distributions to inject additional information about the tail into the estimation. Specifically, the bootstrap is conducted similarly to Section \ref{sec:bootstrap_empirical}, but with a key variation that for each resample, we fit the tail using a generalized Pareto distribution. The latter is conducted via various estimation techniques including the maximum likelihood estimation (MLE), method-of-moments (MOM) and probability-weighted moments (PWM), as detailed in \cite{castillo1997fitting,hosking1987parameter}. For identifying the tail data required for fitting, we experiment with different truncation points, specifically using the 0.05, 0.01, and 0.005 empirical tail quantiles of data.

In our experiment, we focus on the heavy-tailed case in Section~\ref{sec:bootstrap_empirical}, where $X_i$ follows a $t$-distribution with degree of freedom $\nu=4$. We consider sample sizes ranging from $10^4$ to $10^6$ which, as demonstrated in Figure \ref{fig:boot_figs_a}, are insufficient for the standard bootstrap method to function effectively. Similar to the experiments in Section \ref{sec:bootstrap_empirical}, each CI is calculated from 100 number of resamples. We run 30 experimental replications to obtain statistics on coverages and interval widths. 




  \begin{figure}[!t]
     \centering
     \subfloat[ \label{fig4}
     ]{\includegraphics[width=0.45\textwidth]{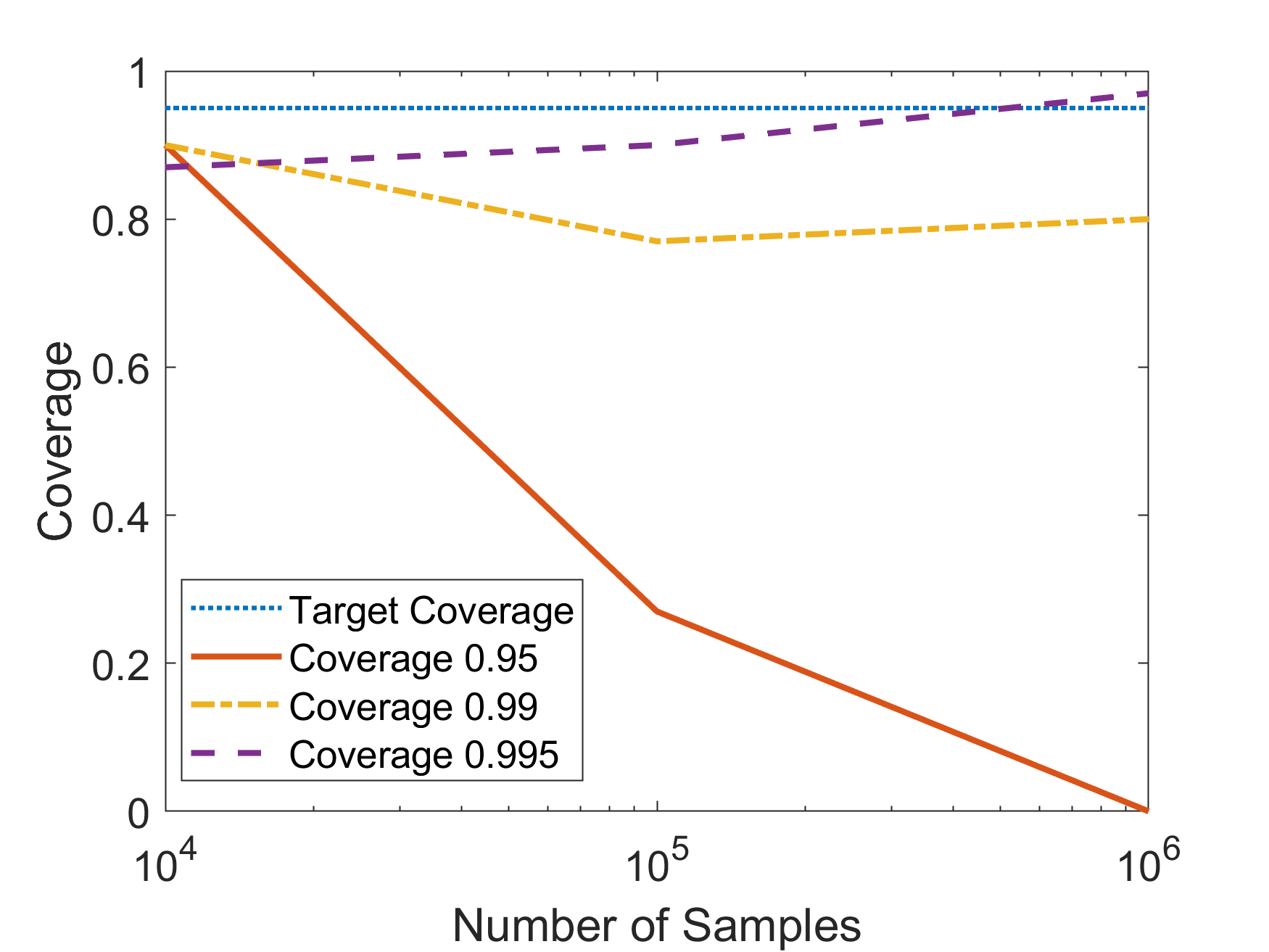}} \quad 
     \subfloat[ \label{fig5}]{\includegraphics[width=0.45\textwidth]{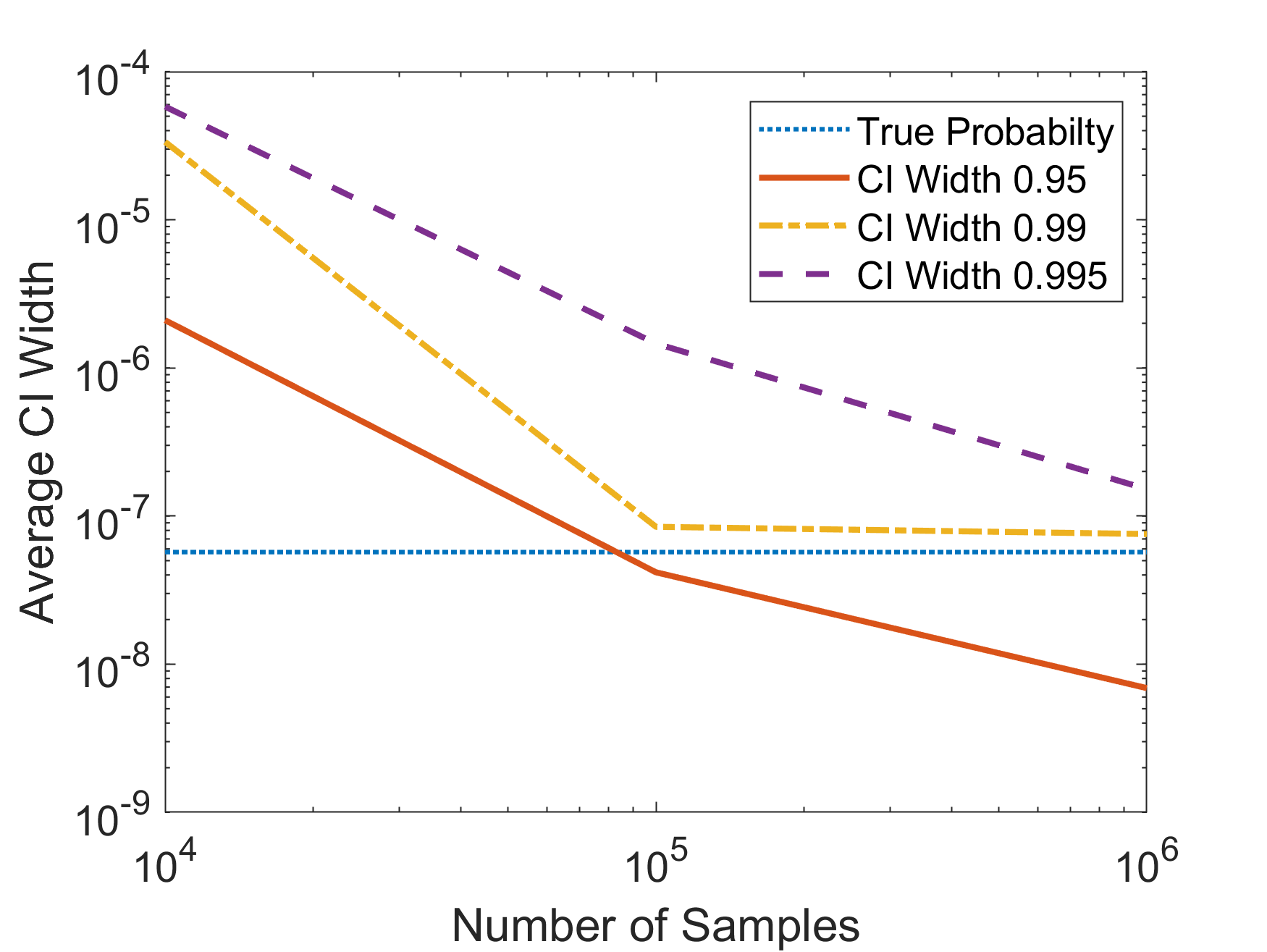}}
     \caption{CI coverages and widths of the generalized Pareto tail bootstrap scheme (fitted using MLE) on the problem with $p=5.7095\times 10^{-8}$, with truncation points in fitting the generalized Pareto set as the 0.05, 0.01, and 0.005 empirical tail quantiles of data. (a) Coverage. (b) Average CI width.}\label{fig:figure_3_set}
 \end{figure}


The experiment results are presented in Figure~\ref{fig:figure_3_set} and Table \ref{table:GPD_boot}. 
Figure~\ref{fig4} shows that in most scenarios, there is an improvement in coverage compared to the standard bootstrap, as indicated by the provided coverages being close to the target level. On the other hand, the coverage provided by the method is notably affected by the chosen truncation point. As depicted in Figure~\ref{fig4}, coverage can decrease significantly, even dropping to zero, with an increase in the sample size. We note that the true distribution in our case is a $t$-distribution, which implies a mismatch in the tail fit due to model misspecification. As the interval width shrinks with an increasing number of samples, the model biasedness starts to surface.


\begin{table}[!t]
\caption{CI coverage and width from the bootstrap using generalized Pareto distribution. ``\# Spl'' represents the sample size and ``Tail Qtl'' represents the tail quantile of the truncation point. The target rare-event probability is $p=5.7095\times 10^{-8}$.}
\label{table:GPD_boot}
\centering
\resizebox{0.9\textwidth}{!}{
\begin{tabular}{llllllll}
\hline
\multicolumn{1}{|l|}{}                                                       &          \multicolumn{1}{l|}{\# Spl }                                                           & \multicolumn{2}{l|}{$10^4$ }                                   & \multicolumn{2}{l|}{$10^5$}                                   & \multicolumn{2}{l|}{$10^6$}                                   \\ \hline
 \multicolumn{1}{|l|}{Tail Qtl}    & \multicolumn{1}{l|}{Method} & \multicolumn{1}{l|}{Coverage} & \multicolumn{1}{l|}{CI Width} & \multicolumn{1}{l|}{Coverage} & \multicolumn{1}{l|}{CI Width} & \multicolumn{1}{l|}{Coverage} & \multicolumn{1}{l|}{CI Width} \\ \hline
 \multicolumn{1}{|l|}{\multirow{3}{*}{0.05}}  & \multicolumn{1}{l|}{MLE}    &   0.9                          & \multicolumn{1}{l|}{$2.10 \times 10^{-6}$}         &         0.27       & \multicolumn{1}{l|}{$4.17 \times 10^{-8}$}         &       0          & \multicolumn{1}{l|}{$6.89 \times 10^{-9}$}         \\ 
\multicolumn{1}{|l|}{}                   & \multicolumn{1}{l|}{MOM}    &                  0.67      & \multicolumn{1}{l|}{$1.23 \times 10^{-6}$}         &   0.53                      & \multicolumn{1}{l|}{$4.89 \times 10^{-7}$}         &     0.30                   & \multicolumn{1}{l|}{$3.19 \times 10^{-7}$}         \\ 
\multicolumn{1}{|l|}{}                      & \multicolumn{1}{l|}{PWM}    &                0.87    & \multicolumn{1}{l|}{$1.81 \times 10^{-6}$}         &    0.30       & \multicolumn{1}{l|}{$4.57\times 10^{-8}$}         &     0          & \multicolumn{1}{l|}{$7.04 \times 10^{-9}$}         \\ \hline 
 \multicolumn{1}{|l|}{\multirow{3}{*}{0.01}}  & \multicolumn{1}{l|}{MLE}    &               0.90   & \multicolumn{1}{l|}{$3.36 \times 10^{-5}$}         &                   0.77   & \multicolumn{1}{l|}{$8.46 \times 10^{-7}$}         &     0.80     & \multicolumn{1}{l|}{$7.56 \times 10^{-8}$}         \\
\multicolumn{1}{|l|}{}                          & \multicolumn{1}{l|}{MOM}    &                  0.60     & \multicolumn{1}{l|}{$1.26 \times 10^{-6}$}         &                       0.70 & \multicolumn{1}{l|}{$8.10 \times 10^{-7}$}         &                     0.77   & \multicolumn{1}{l|}{$5.61 \times 10^{-7}$}         \\ 
\multicolumn{1}{|l|}{}                             & \multicolumn{1}{l|}{PWM}    &                0.90     & \multicolumn{1}{l|}{$1.09 \times 10^{-5}$}         &                    0.77   & \multicolumn{1}{l|}{$8.13 \times 10^{-7}$}         &                   0.77     & \multicolumn{1}{l|}{$8.84 \times 10^{-8}$}         \\ \hline 
 \multicolumn{1}{|l|}{\multirow{3}{*}{0.005}}  & \multicolumn{1}{l|}{MLE}    &              0.87     & \multicolumn{1}{l|}{$5.81 \times 10^{-5}$}         &                   0.90     & \multicolumn{1}{l|}{$1.46 \times 10^{-6}$}         &                   0.97     & \multicolumn{1}{l|}{$1.42 \times 10^{-7}$}         \\ 
\multicolumn{1}{|l|}{}                             & \multicolumn{1}{l|}{MOM}    &                 0.63    & \multicolumn{1}{l|}{$1.17 \times 10^{-6}$}         &    0.67       & \multicolumn{1}{l|}{$8.43 \times 10^{-7}$}         &      0.87      & \multicolumn{1}{l|}{$6.10 \times 10^{-7}$}         \\ 
\multicolumn{1}{|l|}{}                           & \multicolumn{1}{l|}{PWM}    &                0.87        & \multicolumn{1}{l|}{$1.25 \times 10^{-5}$}         &    0.83   & \multicolumn{1}{l|}{$1.58 \times 10^{-6}$}         &      0.97       & \multicolumn{1}{l|}{$1.70 \times 10^{-7}$}         \\ \hline

\end{tabular}
}
\end{table}

Among the three approaches for fitting generalized Pareto distribution, MLE and PWM turn out to be more reliable than MOM. This is revealed in Table \ref{table:GPD_boot}, where the coverage of MOM is less than the other two approaches in most cases. The performance matches the documented fact that MOM is unreliable when the shape parameter $\xi > 0.2$ \cite{hosking1987parameter}. When the sample size is smaller ($10^4$ observations), PWM gives a smaller average interval width than MLE, while providing similar coverages (e.g., with 0.01 tail quantile the widths are $3.36 \times 10^{-5}$ for MLE and $1.09 \times 10^{-5}$ for PWM). It therefore suggests that PWM is more suitable for smaller samples. When the sample size is large ($10^6$ observations), MLE has an upper hand in terms of interval width (e.g., with 0.005 tail quantile the widths are $1.42 \times 10^{-7}$ for MLE and $1.70 \times 10^{-7}$ for PWM).

The experimental results show that although the overall performance of this enhanced bootstrap scheme is better than the standard bootstrap, the obtained CIs can still be misleading. The latter is caused by the model biasedness from the generalized Pareto distribution that the bootstrap cannot overcome. 

\subsection{Detection of Tail Heaviness.} \label{sec:tail_detection}
We have seen evidence from theories as well as experiments that heavy tail may cause severe under-estimation when using data to inform the input model in rare-event estimation. Moreover, reliably detecting such under-estimation, or in other words providing CIs that correctly capture the amount of uncertainty, could be challenging as well. To address these, our approach is to devise a procedure to detect the risk of uncertainty under-estimation, or in other words identify cases where uncertainty under-estimation, as well as probability under-estimation, is prone to occur with inadequate data. If the procedure detects this risk, then we can mitigate it by collecting data big enough so that its size is confidently larger than $n/p$, a magnitude explained in Section~\ref{sec:samplesize_heavy}. Conversely, if the procedure detects otherwise, then both the analyses in Section~\ref{sec:samplesize_light} and previous experiments have demonstrated that we can obtain reliable estimates with a moderate sample size, such as $10^3$ samples for the experiments discussed in Section~\ref{sec:tail_heaviness}. Note that in the first case, we would need to get a pilot estimate of $p$, which could be obtained from the collected data along with suitable conservative inflation, or through some prior knowledge on its magnitude. However, it could well be the case that we cannot even get the rough magnitude of $p$, or we do not have access to more data, either of which case means any decision-making should account for the under-estimation risk.

We recommend to use properly chosen tail index estimates to conduct such a detection.
In the extreme value theory literature, various graphical and estimation methods can be applied to obtain tail information from data, including quantile-quantile (QQ) plotting \cite{kratz1996qq}, mean excess plotting (e.g. Chapter 7.2.2 in \cite{embrechts2005quantitative}) and tail index estimation (for detailed discussion, see Chapter 4 in \cite{resnick2007heavy} and Chapter 1.3 in \cite{markovich2008nonparametric}) In most of these approaches, the assessment of tail heaviness is achieved by estimating the tail index, $\alpha$ in \eqref{rv}, in the form of slopes in plots or explicit estimators. 
Among various choices, we focus on estimators, denoted as $\hat{\xi}$, that are designed for the extreme value index (also known as the shape parameter) of distributions in the domain of attractions of the generalized Pareto distribution. Compared with classical tail index estimators such as the classical Hill estimator \cite{mason1982laws,de1998asymptotic}, extreme value index estimators have an advantage in dissecting light from heavy tails: A negative estimate ($\hat{\xi}<0$) suggests the samples are light-tailed. On the other hand, for distributions with power-law tails as \eqref{rv}, $1/\hat{\xi}$ is consistent in estimating $\alpha$ \cite{dekkers1989estimation}. In particular, we consider the Pickands estimator \cite{dekkers1989estimation,drees1995refined} and the moment estimator \cite{dekkers1989estimation,dekkers1989moment,dekkers1993optimal,haan2006extreme}.

It is important to note that all tail index estimators, including the extreme value index estimators, are specifically designed for Pareto tails. This means that the presence of a slowly varying function in the distribution could lead to model misspecification. To address this issue, there is a common practice of excluding data outside the tail portion to minimize the impact of the slowly varying function. The truncation point in the data is selected prior to implementing the estimation process. Since the estimators might be sensitive to the selection of the truncation point, a convention is to plot the $(k,\hat{\xi}_k)$ pairs \cite{resnick2007heavy}, where $k$ denotes the number of order statistics (in a decreasing order) of the tail data for truncation, and $\hat{\xi}_k$ represents the corresponding estimate.


\begin{figure}[t]
      \centering
     \subfloat[Pickands, $N=10^4$\label{fig:pick_p_4}]{\includegraphics[width=0.45\textwidth]{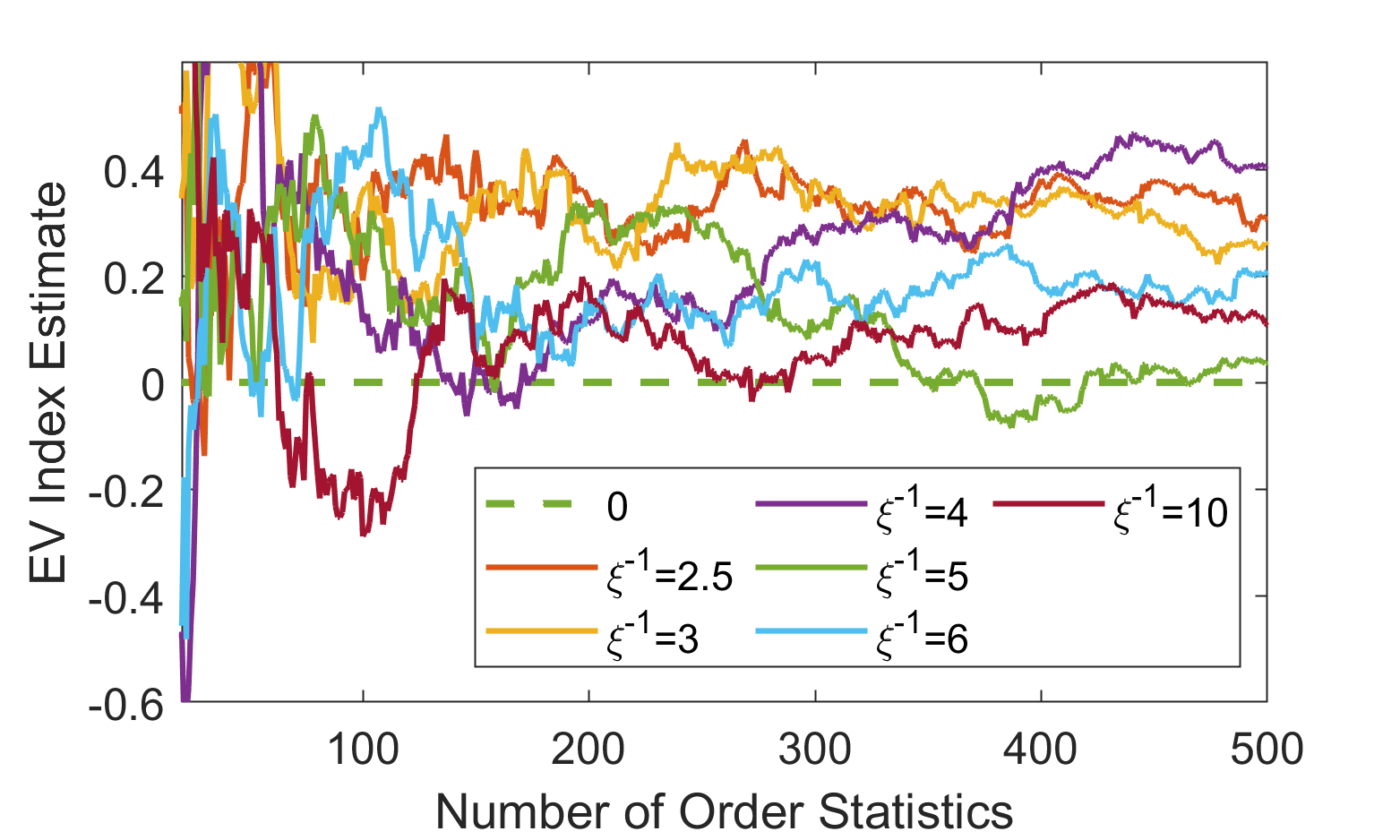}} \quad 
     \subfloat[Moment, $N=10^4$\label{fig:mome_p_4}]{\includegraphics[width=0.45\textwidth]{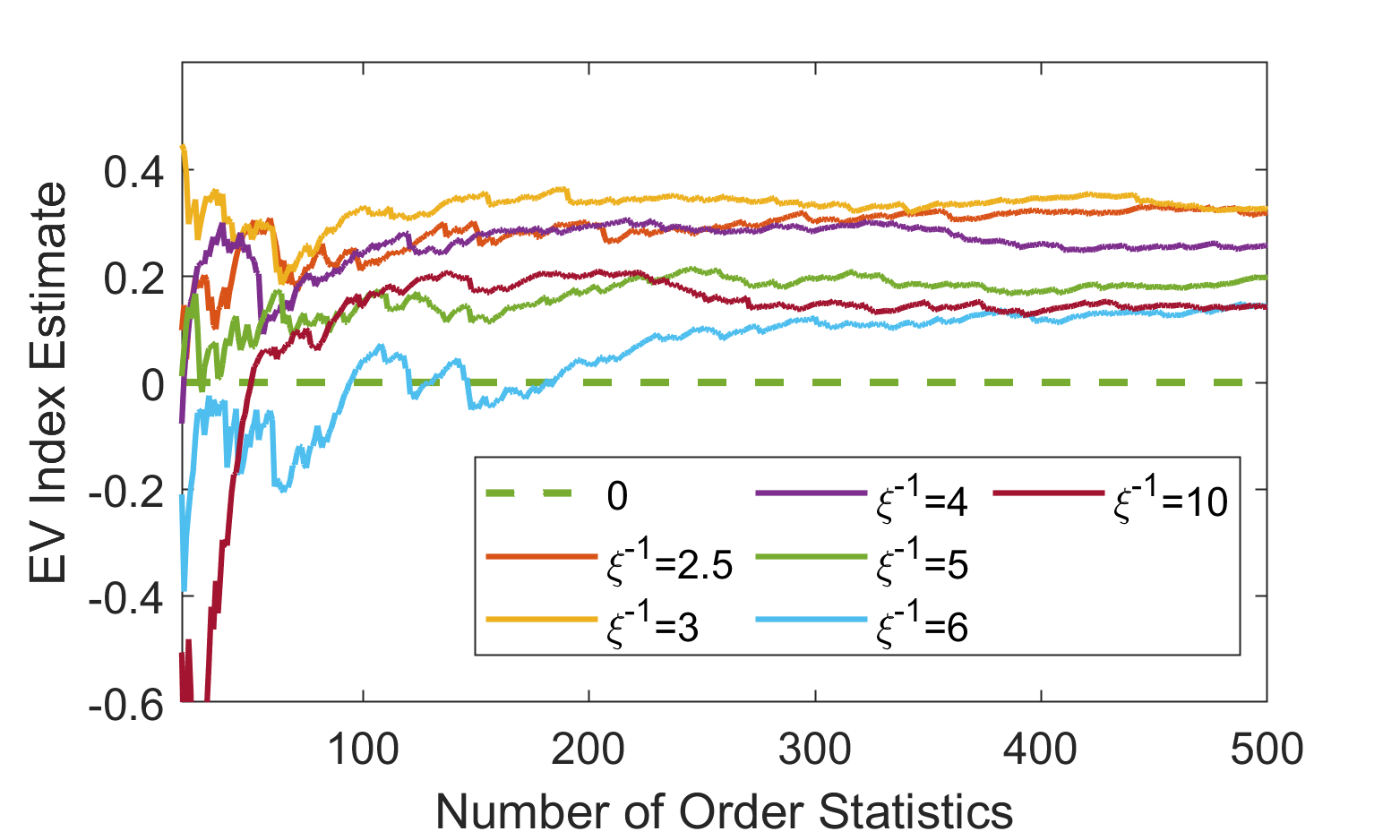}} \quad
     \subfloat[Pickands, $N=10^5$\label{fig:pick_p_5}]{\includegraphics[width=0.45\textwidth]{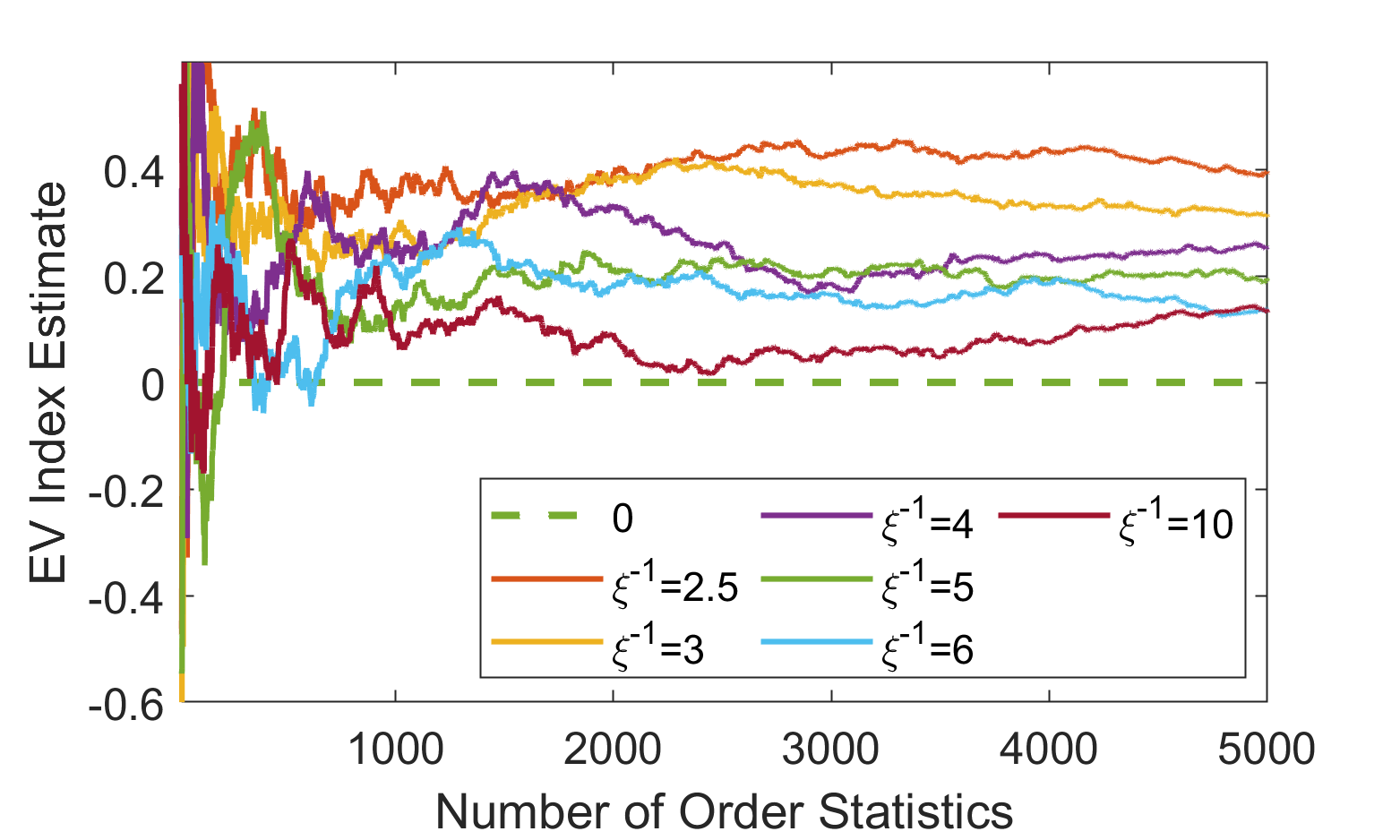}} \quad 
     \subfloat[Moment, $N=10^5$\label{fig:mome_p_5}]{\includegraphics[width=0.45\textwidth]{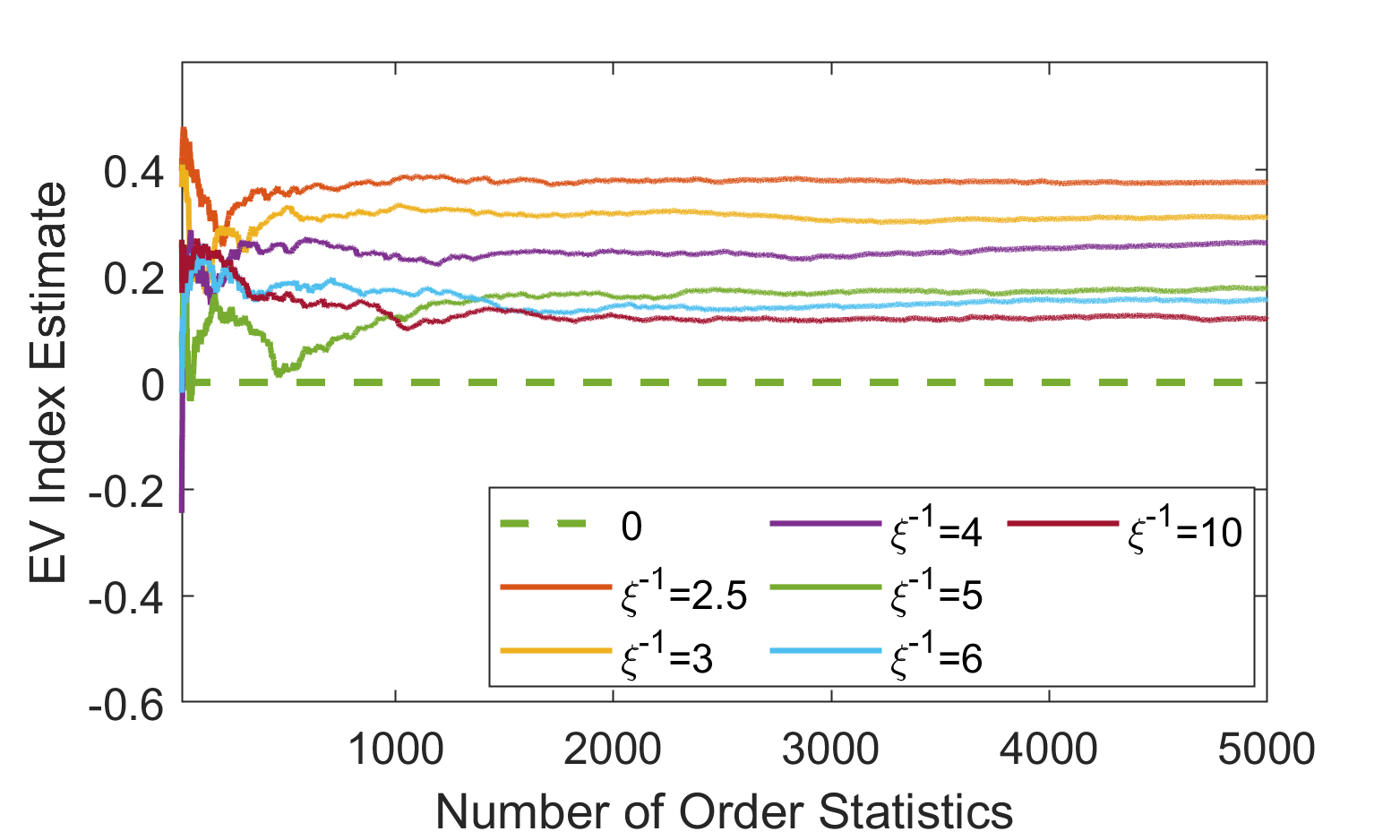}}
     \caption{Estimation results for the extreme value indices with generalized Pareto samples.}
     \label{fig:test_pareto}
 \end{figure}

In the following experiments, we collect data from each distribution in Section \ref{sec:tail_heaviness} and estimate their extreme value index using the Pickands estimator and the moment estimator. In Section \ref{sec:tail_heaviness}, we observed that one cannot obtain relatively accurate estimates with as small as $N=10^4$ samples in heavy-tailed problems. Here we want to check whether we are able to detect these cases with a similar amount of samples and hence to provide warnings of under-estimation caused by heavy tails. Our experimental results, including those for the Pareto distribution, $t$-distribution, and other distributions, are presented in Figures \ref{fig:test_pareto}, \ref{fig:test_t}, and \ref{fig:test_rest}, respectively.

Figure \ref{fig:test_pareto} shows the performance of the estimators with data generated from Pareto distributions. We classify the data as exhibiting ``heavy-tailed behaviors'' when the estimates with the majority of $k$ notably above 0. This characteristic is observed in all cases presented in the figure, showing that both estimators are capable of detecting the heavy tails.
For Pickands estimator with $N=10^4$ (in Figure \ref{fig:pick_p_4}), all the estimates are above 0 for most of the $k$ values, though the estimates are unstable when $k$ varies. For instance, for $\xi^{-1}=4$, the estimates fall below 0 with $k \in [50,120]$ (the purple solid line). However, this variation would not affect the conclusion since the rest of the values are above 0.
Compared to the Pickands estimator, the moment estimates are more stable with $k$, as the estimates are consistently above 0, especially when $k>100$. As we increase the number of samples to $N=10^5$ in Figures \ref{fig:pick_p_5} and \ref{fig:mome_p_5}, the estimates of both estimators are more clearly above 0, which is a stronger warning of heavy tails. In terms of estimating the extreme value index, the Pickands estimates (Figure \ref{fig:pick_p_5}) perform worse than the moment estimates (Figure \ref{fig:mome_p_5}), as they are less consistent with a varying $k$. Additionally, these estimates appear not accurate enough to correctly rank the tail heaviness and hence cannot reliably imply the severity of heavy-tailed behavior. For instance, $\xi^{-1}=6$ (the cyan solid line) consistently has smaller estimates than $\xi^{-1}=10$ (the dark red solid line), whereas the true indices are 1/6 and 1/10 respectively.

\begin{figure}[t]
      \centering
     \subfloat[Pickands, $N=10^4$\label{fig:pick_t_4}]{\includegraphics[width=0.45\textwidth]{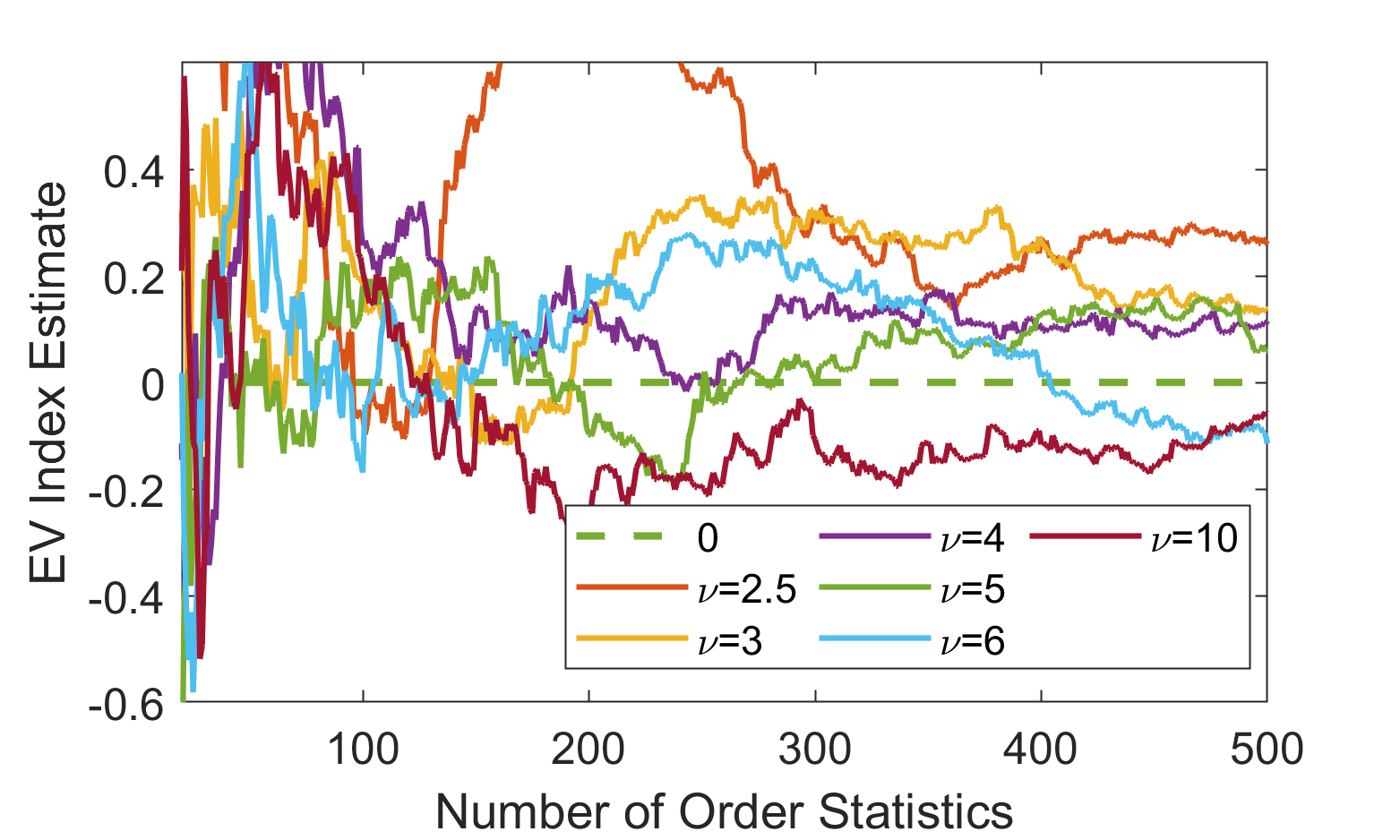}} \quad 
     \subfloat[Moment, $N=10^4$\label{fig:mome_t_4}]{\includegraphics[width=0.45\textwidth]{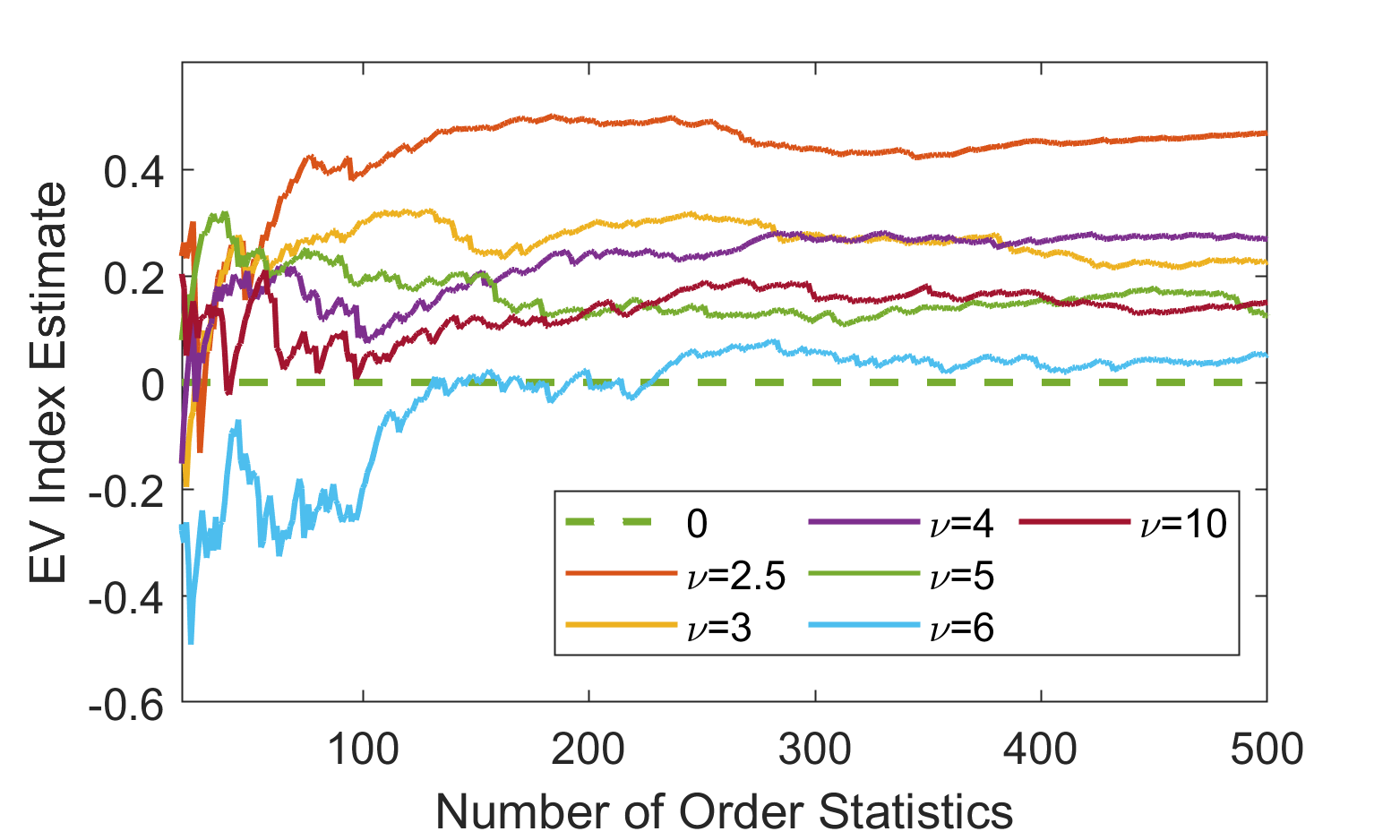}} \quad
     \subfloat[Pickands, $N=10^5$\label{fig:pick_t_5}]{\includegraphics[width=0.45\textwidth]{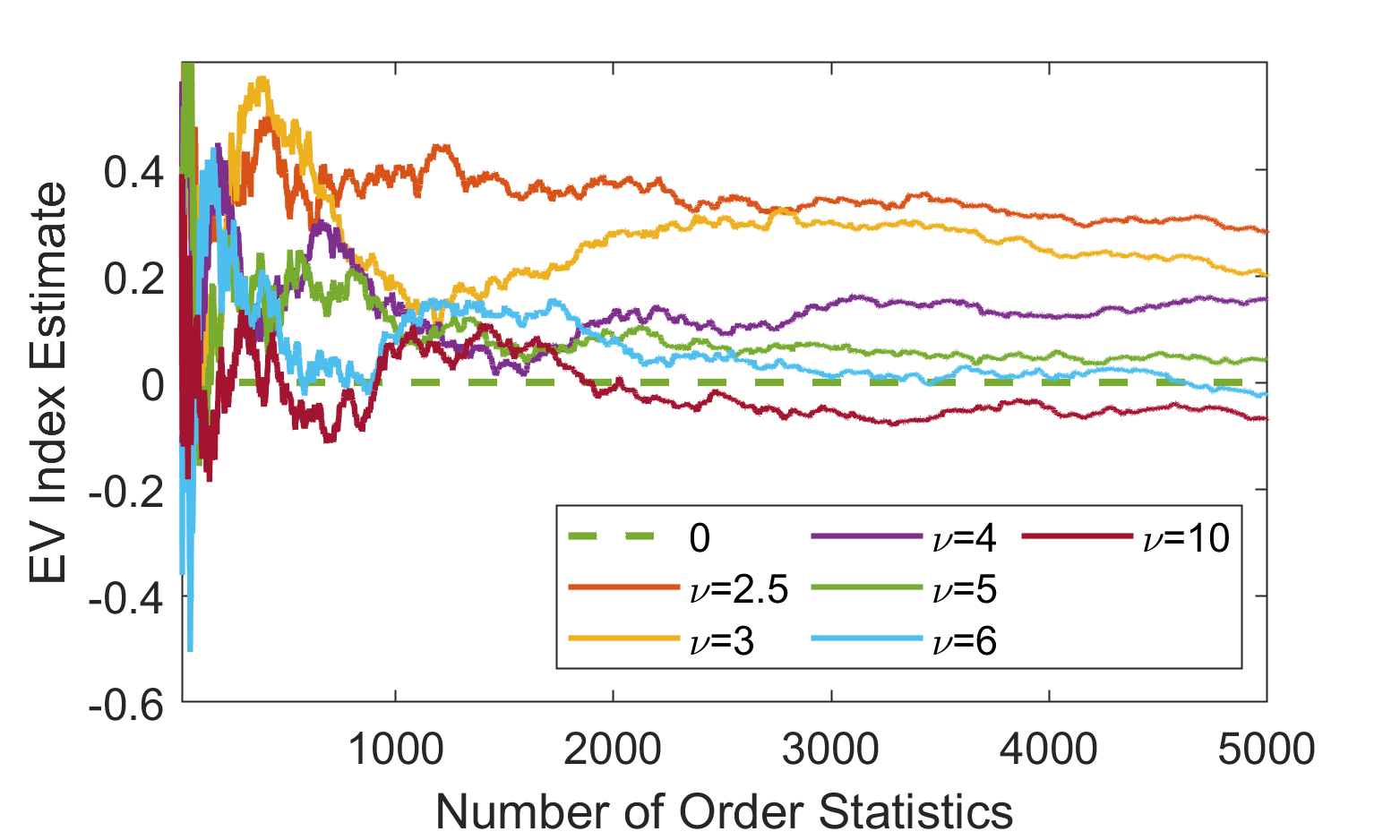}} \quad 
     \subfloat[Moment, $N=10^5$\label{fig:mome_t_5}]{\includegraphics[width=0.45\textwidth]{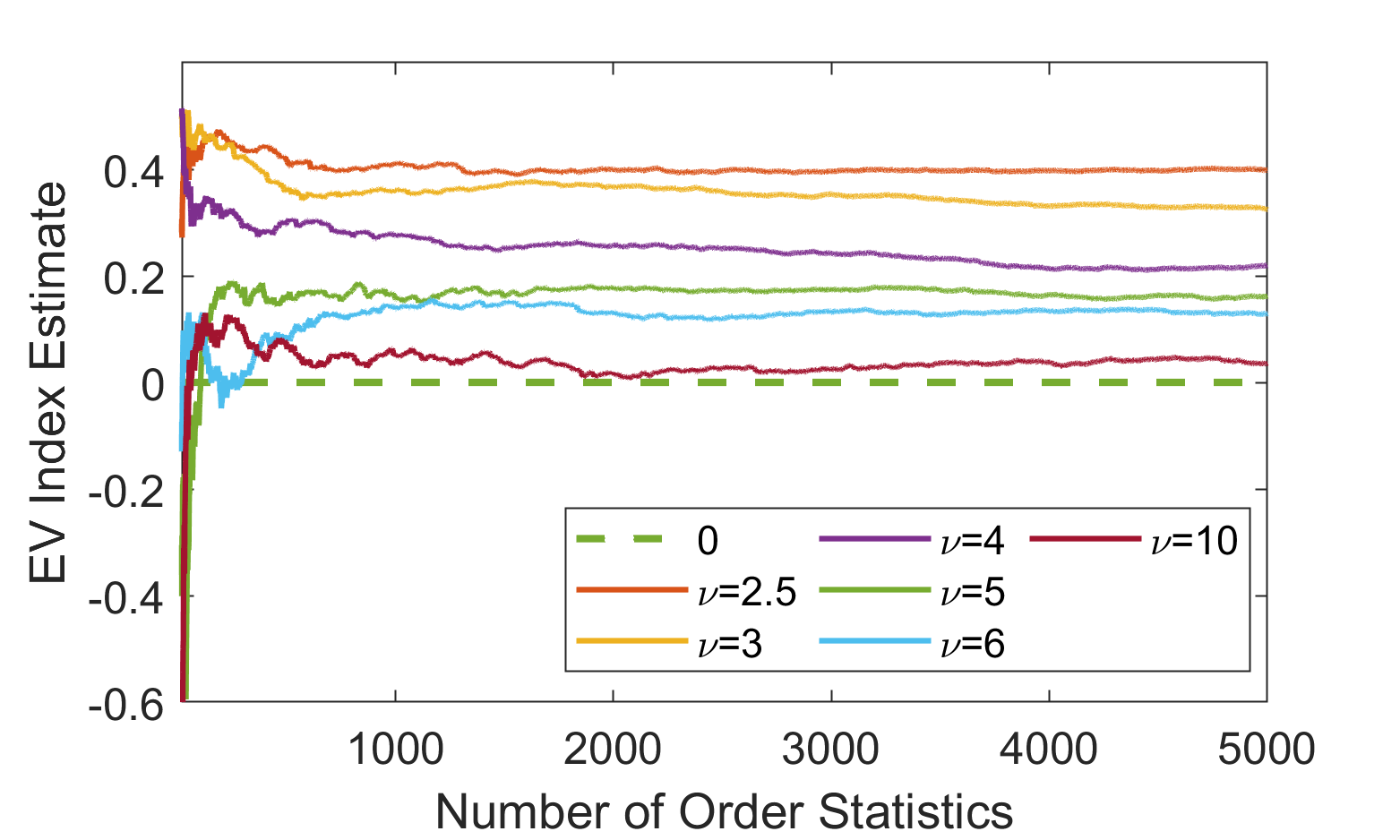}}
     \caption{Estimation results for the extreme value indices with Student's $t$ samples.}
     \label{fig:test_t}
 \end{figure}
 
Next, Figure \ref{fig:test_t} presents the results for data generated from Student's $t$-distribution. We observe that the Pickands estimator may lead to misclassification of heavy-tailed data as light-tailed by yielding estimates smaller than 0 for most $k$ values. This performance is less reliable compared to its performance in the Pareto case.
In particular, with $N=10^4$ samples in Figure \ref{fig:pick_t_4}, we observe that the estimates for $\nu=10$ (the dark red solid line) are consistently smaller than 0. The estimates for $\nu=5$ (the green solid line) and 6 (the cyan solid line) also cross 0 for some $k$ values (around 200 to 250 for $\nu=5$ and 400 to 500 for $\nu=6$). With more samples $N=10^5$ in Figure \ref{fig:pick_t_5}, the estimates for $\nu=10$ (the dark red solid line) are still below 0 for most of the $k$'s. On the other hand, the moment estimator is more consistent and has less variations with varying $k$'s. With $N=10^4$ or $N=10^5$ samples in Figures \ref{fig:mome_t_4} and \ref{fig:mome_t_5}, most of the estimates are positive. Similar to the Pareto cases, the moment estimates are capable of detecting heavy-tailed behaviors, but cannot rank the heaviness correctly when the sample size is small (e.g., with $N=10^4$ in Figure \ref{fig:mome_t_4}).

\begin{figure}[t]
      \centering
     \subfloat[Pickands, $N=10^4$\label{fig:pick_rest_4}]{\includegraphics[width=0.45\textwidth]{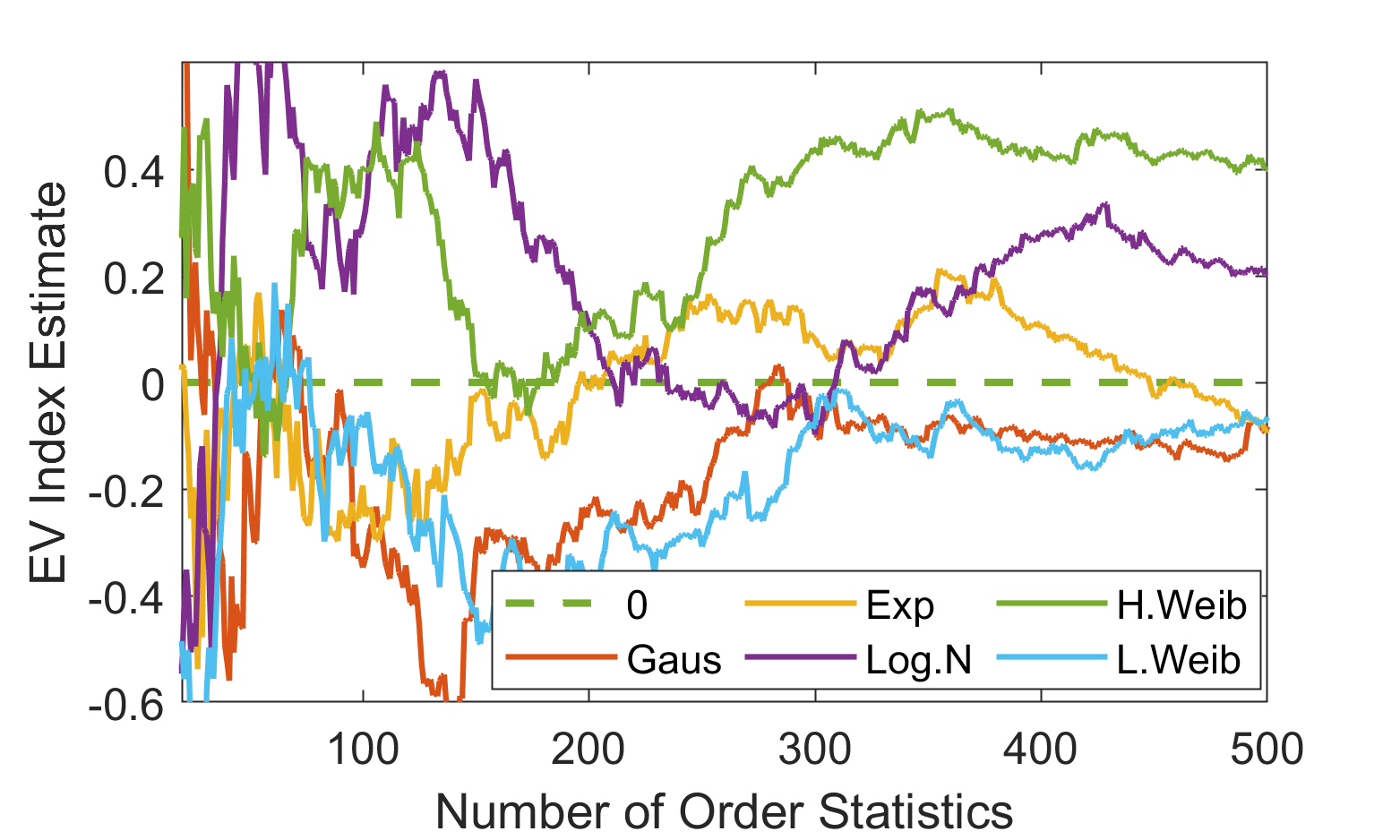}} \quad 
     \subfloat[Moment, $N=10^4$\label{fig:mome_rest_4}]{\includegraphics[width=0.45\textwidth]{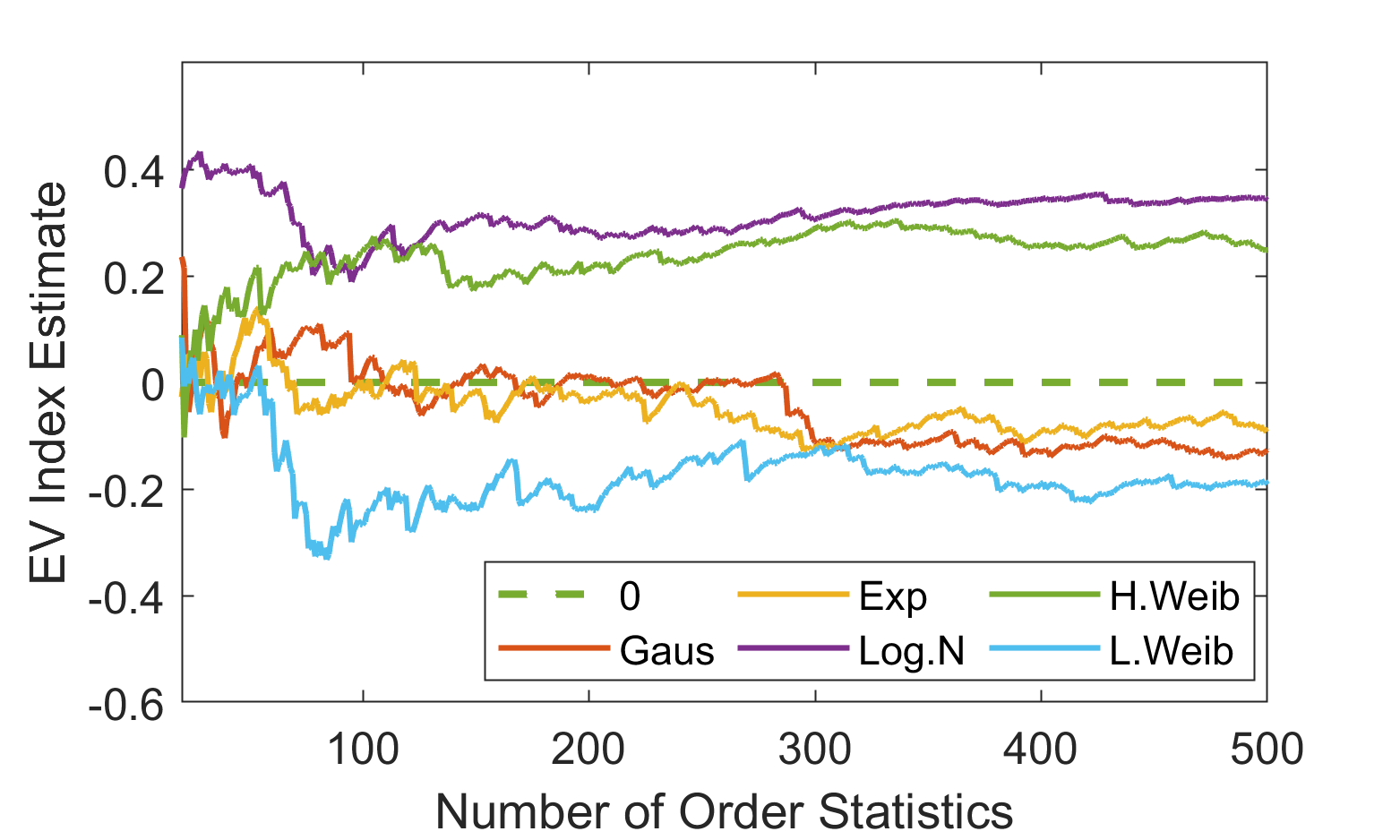}} \quad
     \subfloat[Pickands, $N=10^5$\label{fig:pick_rest_5}]{\includegraphics[width=0.45\textwidth]{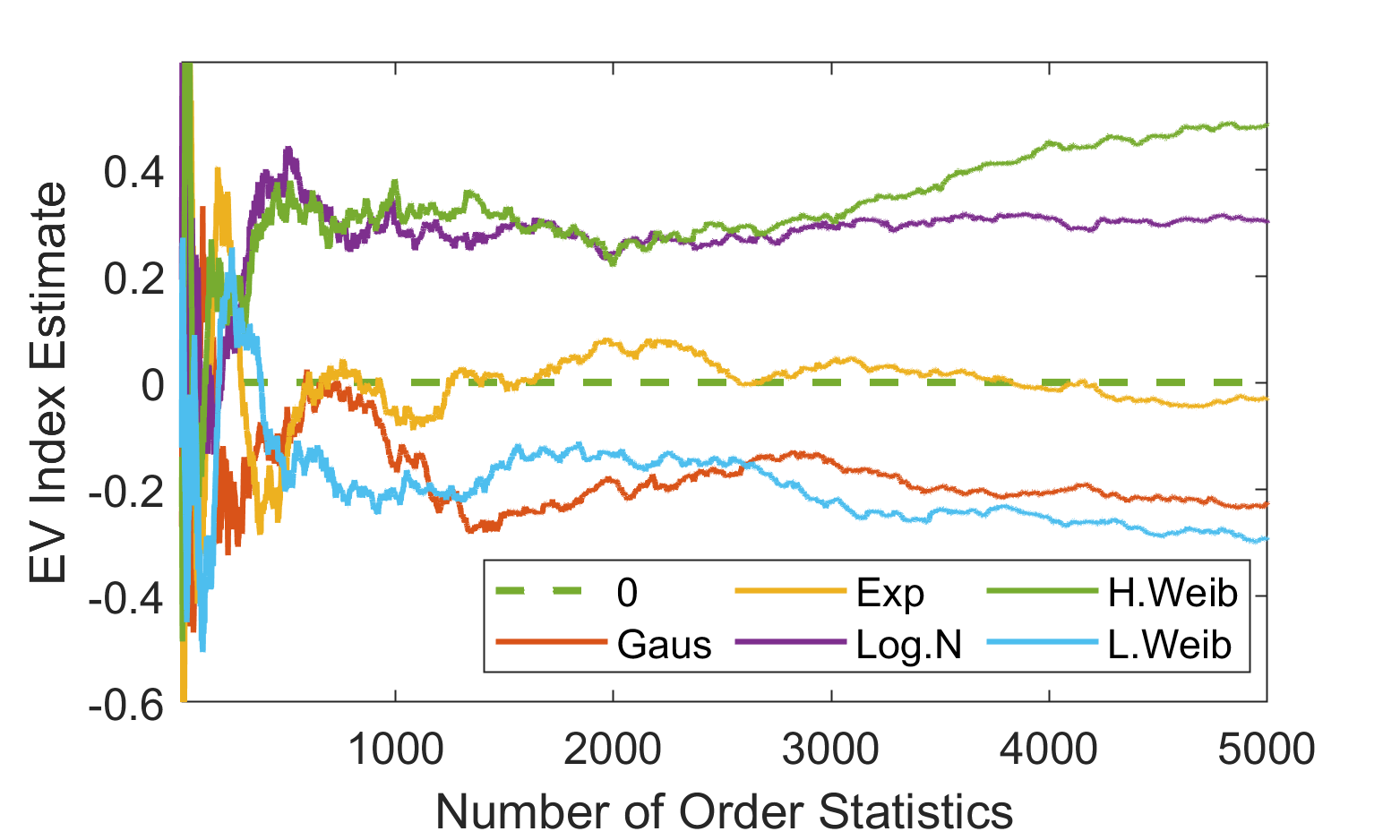}} \quad 
     \subfloat[Moment, $N=10^5$\label{fig:mome_rest_5}]{\includegraphics[width=0.45\textwidth]{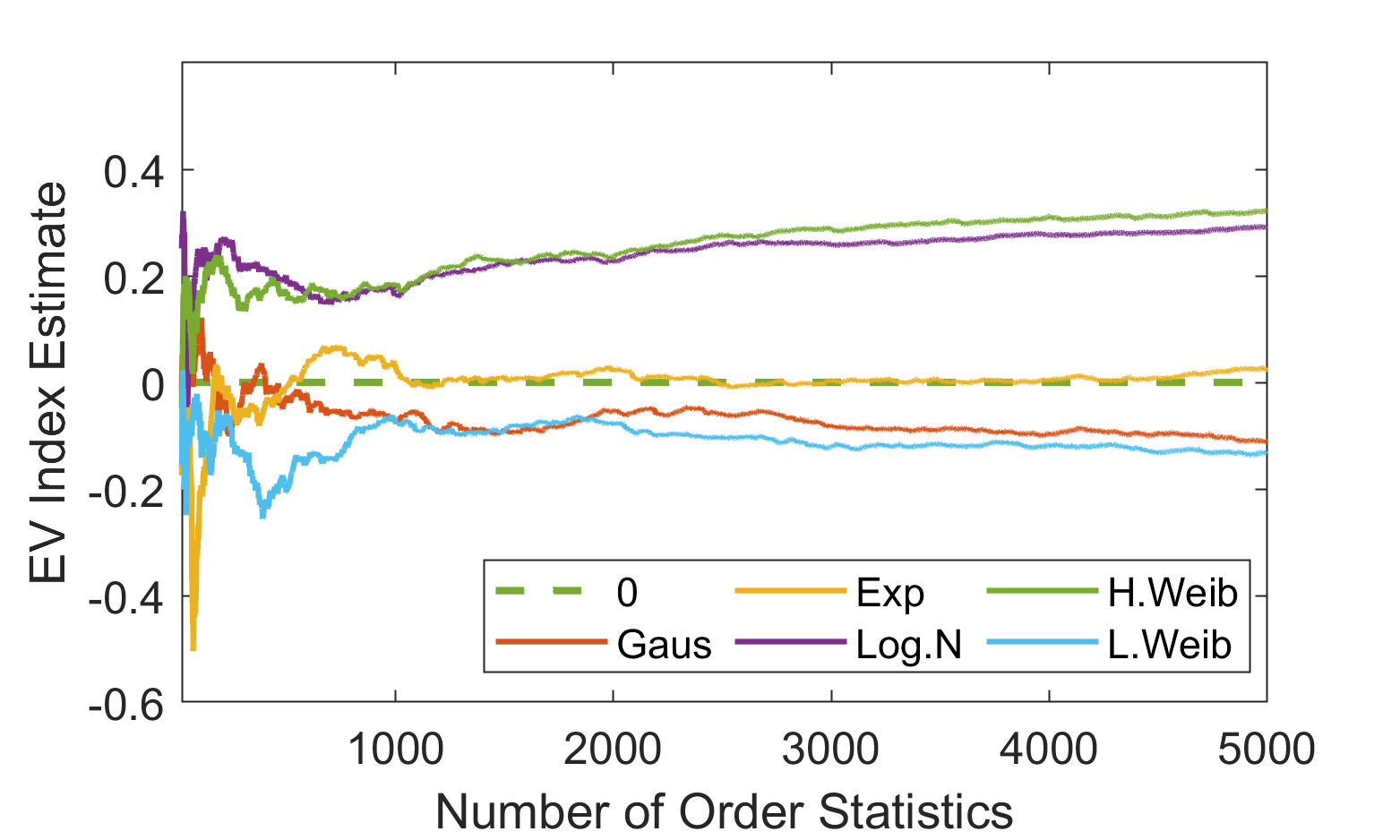}}
     \caption{Estimation results for the extreme value indices with non-power-law-tail samples. }
     \label{fig:test_rest}
 \end{figure}
 
Lastly, we consider samples from light-tailed distributions and subexponential distributions with non-power-law tails. We recall that in Section \ref{sec:exp_slow_varying}, log-normal and heavy-tailed Weibull distributions have similar performances as Pareto tail distributions in terms of under-estimations. Figure \ref{fig:test_rest} shows that extreme value index estimation can successfully detect the risk of under-estimations by classifying these non-power-law subexponential distributions as heavy tails. With these distributions, the extreme value index estimates are consistently positive and range mostly from 0.2 to 0.4. Once again, we find that the extreme value index estimator is unable to accurately gauge the severity of heavy-tailed behaviors. This is evident from the fact that the index estimates for both the log-normal and heavy-tailed Weibull distributions are not substantially different in Figures \ref{fig:pick_rest_5} and \ref{fig:mome_rest_5}. However, as discussed in Section \ref{sec:exp_slow_varying}, the log-normal distributions require a larger sample size than heavy-tailed Weibull distributions for obtaining reliable estimations.

In these experiments, it can be concluded that extreme value index estimators are effective in detecting the risk of under-estimations by classifying whether the data are heavy tailed. This is particularly prominent for the moment estimator which appear to give correct classifications across our considered settings. However, we also note that these estimators may not provide precise estimates for the tail indices and, as a result, may not accurately assess the severity of heavy-tailed behaviors.

\subsection{Summary of Experimental Takeaways.} 

The key findings from our experimental study can be summarized as follows:

\begin{itemize}
    \item Heavy-tailed problems are more vulnerable to the lack of data compared to those with light tails---for reliable estimation of rare-event probabilities in heavy-tailed scenarios, significantly larger sample sizes are required than those needed for light-tailed distributions. Moreover, assessing this uncertainty can be equally difficult.
    
    \item Subexponential distributions, even in the absence of a power-law tail, can be prone to under-estimation of both the point estimate and uncertainty. This is evident from the similar sample size requirements for reliable estimation of rare-event probabilities in subexponential distributions and heavy-tailed distribution problems.
    
    \item  Standard bootstrap methods do not effectively provide valid uncertainty quantification for heavy-tailed problems when the sample size is only moderate.
    
    \item While incorporating generalized Pareto tail extrapolation into the bootstrap improves the coverage of bootstrap CIs, the reliability of this coverage is affected by model misspecification.
    
    \item Estimators for the extreme index, especially the moment estimator, appear effective in detecting the risk of under-estimation of the point estimate and uncertainty by classifying whether a distribution is heavy-tailed.
    
    \item Our suggestion of a dependable approach to ensure reliable estimation is a two-step procedure: First detect whether the data is prone to the risk of under-estimations. If so, employ a sample size that is confidently larger than $n/p$ or, in cases where this is not possible, then any decision-making should account for the substantial under-estimation risk.
\end{itemize}

\begin{APPENDICES}

\section{Results for $P(S_{n}\geq\gamma)$.}\label{sec:another target}
We define the target probability as
\begin{equation}
\tilde{p}(G)=P_{G}(S_{n}\geq\gamma)\label{p_tilde}
\end{equation}
where $X_{i}$'s are governed by an arbitrary distribution $G$. Using $p(G)$ or $\tilde{p}(G)$ to estimate the overshoot will not make a difference in the heavy-tailed case. In fact, we have the following theorems:
\begin{theorem}[Unreliable estimation of $\tilde{p}(F)$ for the heavy-tailed distribution]
Under the same assumptions in Theorem \ref{heavy}, the discrepancy between using a truncated distribution $\tilde F_u$ and the original distribution $F$ in evaluating the probability $\tilde{p}(F)=P(S_n\ge\gamma)$ as $n\to\infty$ is given by
$$ \tilde{p}(\tilde{F}_{u})-\tilde{p}(F)=-\tilde{p}(F)(1+o(1)).$$
\label{heavy_p_tilde}
\end{theorem}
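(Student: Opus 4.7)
The plan is to derive Theorem \ref{heavy_p_tilde} as an immediate corollary of Theorem \ref{heavy}, by exploiting the standing assumption that $X$ admits a density $f$ under $F$ together with the fact that the truncated distribution $\tilde{F}_u$ is again absolutely continuous. Under these assumptions, $P(S_n = \gamma) = 0$ under both measures, so the two target functionals coincide: $\tilde{p}(F) = p(F)$ and $\tilde{p}(\tilde{F}_u) = p(\tilde{F}_u)$. This reduces the $\geq$-version to the $>$-version already handled in the body of the paper.

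Concretely, I would first verify that $S_n$ has no atom at $\gamma$ under either measure. Under $F$, the density of $S_n$ is the $n$-fold convolution $f^{\ast n}$, so $P(S_n = \gamma) = 0$. Under $\tilde{F}_u$, one checks that the cdf $\tilde{F}_u(x) = F(x)/F(u)$ on $(-\infty, u]$ and $1$ on $(u, \infty)$ has no jump at $u$ (its left limit there is $F(u)/F(u) = 1$), so $\tilde{F}_u$ is absolutely continuous with density $\tilde{f}_u(x) = (f(x)/F(u))\,I(x \leq u)$; hence $S_n$ again has a density (the $n$-fold convolution $\tilde{f}_u^{\ast n}$), giving $P_{\tilde{F}_u}(S_n = \gamma) = 0$ as well. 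Both identities $\tilde{p}(F) = p(F)$ and $\tilde{p}(\tilde{F}_u) = p(\tilde{F}_u)$ then follow from decomposing $\{S_n \geq \gamma\} = \{S_n > \gamma\} \cup \{S_n = \gamma\}$ and discarding the null set.

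With these identities in hand, I would invoke Theorem \ref{heavy} directly: under the inherited assumptions (regular variation of $\bar F$ with index $\alpha > 2$, $E|X|^{2+\delta} < \infty$, $\gamma = n\mu + \omega(\sqrt{n\log n})$, and $u \leq \mu + O((\gamma - n\mu)/\sqrt{\log n})$), one has $p(\tilde{F}_u) - p(F) = -p(F)(1+o(1))$. Substituting the identities yields
\[
\tilde{p}(\tilde{F}_u) - \tilde{p}(F) = p(\tilde{F}_u) - p(F) = -p(F)(1+o(1)) = -\tilde{p}(F)(1+o(1)),
\]
which is the conclusion. There is no genuine analytical obstacle here: the only point to flag is that the continuity of the underlying distribution neutralizes the distinction between strict and non-strict inequality in the rare-event target, so no new large-deviations analysis is required beyond Theorem \ref{heavy}.
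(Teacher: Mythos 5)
Your proof is correct, but it takes a genuinely different and more elementary route than the paper's. The paper's proof of Theorem~\ref{heavy_p_tilde} simply re-runs the Nagaev-based asymptotic argument from the proof of Theorem~\ref{heavy}, observing that the key estimates \eqref{asymptotic_appro1_heavy} and \eqref{asymptotic_appro2_heavy} continue to hold with $\geq$ in place of $>$ (indeed Nagaev's bound (1.25b) is naturally stated for $P(S_n \geq \cdot)$, so this direction requires no squeezing). You instead exploit the paper's standing assumption in Section~\ref{sec:setting} that $X$ has a density $f$, from which $S_n$ is absolutely continuous under both $F$ and $\tilde F_u$ (since $\tilde f_u = (f/F(u))\,I(\cdot \le u)$ is a density and $\tilde F_u$ has no atom at $u$), so $P(S_n = \gamma) = 0$ under both laws, $\tilde p(\cdot) = p(\cdot)$, and the theorem falls out of Theorem~\ref{heavy} with no further analysis. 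Your argument is shorter and cleaner under the paper's stated hypotheses. What the paper's route buys is independence from the density assumption: the heavy-tail asymptotics used there only require regular variation of $\bar F$, and the authors handle $\geq$ versus $>$ carefully throughout Appendix~\ref{sec:another target} because the distinction genuinely matters in the lattice light-tailed case (Theorem~\ref{light_p_tilde}), where there is no density to appeal to; keeping a uniform argumentative style is presumably why they did not take your shortcut even where it is available.
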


\begin{theorem}[Reliable estimation of $\tilde{p}(F)$ for the heavy-tailed distribution]
Under the same assumptions in Theorem \ref{heavy_large_u}, the discrepancy between using a truncated distribution $\tilde F_u$ and the original distribution $F$ in evaluating the probability $\tilde{p}(F)=P(S_n\ge\gamma)$ as $n\to\infty$ is asymptotically negligible, i.e.,
$$ \tilde{p}(\tilde{F}_{u})-\tilde{p}(F)=o(\tilde{p}(F)).$$
\label{heavy_large_u_p_tilde}
\end{theorem}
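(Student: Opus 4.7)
The plan is to reduce Theorem \ref{heavy_large_u_p_tilde} to Theorem \ref{heavy_large_u} by exploiting the standing assumption from Section \ref{sec:setting} that $X$ admits a density $f$. Under this assumption the events $\{S_n>\gamma\}$ and $\{S_n\geq\gamma\}$ differ by a set of probability zero, so the two target quantities coincide under both $F$ and $\tilde F_u$.

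First I would verify this identification. Since the $X_i$'s are i.i.d. with a density, the convolution $S_n$ has a density for every $n\geq1$, hence $P_F(S_n=\gamma)=0$ and $\tilde p(F)=p(F)$. The truncated distribution $\tilde F_u$ also admits a density $\tilde f_u(x)=(f(x)/F(u))I(x\leq u)$ (with $F(u)>0$ automatic for $u$ large enough under the hypothesis $u=\omega((\gamma-n\mu)^\beta)$), so $S_n$ under $\tilde F_u$ is again continuous and $\tilde p(\tilde F_u)=p(\tilde F_u)$.

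With these two identifications in hand, the conclusion is immediate: Theorem \ref{heavy_large_u} gives $p(\tilde F_u)-p(F)=o(p(F))$, and substituting yields
\[
\tilde p(\tilde F_u)-\tilde p(F)=p(\tilde F_u)-p(F)=o(p(F))=o(\tilde p(F)),
\]
which is the claim.

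There is no real obstacle here; the only point worth flagging is that replacing the strict inequality by the weak inequality is justified purely by continuity of the relevant convolutions, independently of any tail behavior, so it holds uniformly in $n$ and $u$. Consequently the theorem is a direct corollary of its strict-inequality counterpart and no new tail analysis is needed.
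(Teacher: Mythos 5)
Your proof is correct, and it takes a genuinely different and simpler route than the paper's. You invoke the standing assumption from Section \ref{sec:setting} that $X$ admits a density $f$ (and hence $\tilde F_u$ admits $\tilde f_u$), so $S_n$ is absolutely continuous under both laws, $P(S_n=\gamma)=0$, and $\tilde p(\cdot)=p(\cdot)$ identically for each fixed $n$; the theorem then becomes a verbatim restatement of Theorem \ref{heavy_large_u}. The paper instead re-runs the tail estimates from the proof of Theorem \ref{heavy} with $\geq$ in place of $>$, showing that the key asymptotics \eqref{asymptotic_appro1_heavy} and \eqref{asymptotic_appro2_heavy} carry over unchanged for $P(S_n\geq\gamma,\cdot)$ and $P(S_n\geq\gamma)$. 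The paper's route buys slight generality: it only uses the regular-variation-driven fact that $P(S_n>\gamma)$ and $P(S_n\geq\gamma)$ are asymptotically equivalent in ratio (already visible in display \eqref{squeeze_inequality}), and so would survive dropping the density assumption for a heavy-tailed $F$ with atoms. Your argument buys brevity and transparency, at the cost of leaning on a hypothesis that the theorem statement itself does not repeat; given that the density assumption is explicitly declared as standing in Section \ref{sec:setting}, that reliance is legitimate, though it is worth flagging (as you did) so a reader does not mistakenly think the reduction is purely measure-theoretic with no assumption on $F$.
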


In the light-tailed case, the asymptotics of the probabilities $\tilde{p}(F)$ and $\tilde{p}(\tilde{F}_{u})$ will change in the lattice case. But since the changes of the asymptotics are the same for $\tilde{p}(F)$ and $\tilde{p}(\tilde{F}_{u})$, this will not affect the reliability result in Theorem \ref{light}. We summarize that as the following theorem:
\begin{theorem}[Reliable estimation of $\tilde{p}(F)$ for the light-tailed distribution]
Under the same assumptions in Theorem \ref{light}, we have%
\begin{equation}
\lim_{n\rightarrow\infty}\theta^*\sqrt{\psi^{\prime\prime}(\theta^*)2\pi n}e^{\theta^*nb-n\psi(\theta^*)}\tilde{p}(F)
=\lim_{n\rightarrow\infty}\theta^*\sqrt{\psi^{\prime\prime}(\theta^*)2\pi n}e^{\theta^*nb-n\psi(\theta^*)}\tilde{p}(\tilde{F}_{u})
=1
\end{equation}
in the non-lattice case, and
\begin{equation}
\lim_{n\rightarrow\infty}\theta^*\sqrt{\psi^{\prime\prime}(\theta^*)2\pi n}e^{\theta^*nb-n\psi(\theta^*)}\tilde{p}(F)
=\lim_{n\rightarrow\infty}\theta^*\sqrt{\psi^{\prime\prime}(\theta^*)2\pi n}e^{\theta^*nb-n\psi(\theta^*)}\tilde{p}(\tilde{F}_{u})
=\frac{\theta^*h}{1-e^{-\theta^*h}}
\end{equation}
in the lattice case. Moreover, in both cases, the discrepancy between using a truncated distribution $\tilde{F}_{u}$ and the original distribution $F$ in evaluating the probability $\tilde{p}(F)=P(S_{n}\ge\gamma)$ is asymptotically negligible, i.e.,
\begin{equation}
\tilde{p}(\tilde{F}_{u})-\tilde{p}(F)=o(\tilde{p}(F)).
\end{equation}
\label{light_p_tilde}
\end{theorem}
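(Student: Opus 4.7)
The plan is to reduce Theorem \ref{light_p_tilde} to Theorem \ref{light} and Lemma \ref{asymptotic_truncated} via the decomposition $\tilde{p}(G) = p(G) + P_G(S_n = \gamma)$ for $G \in \{F, \tilde{F}_u\}$. In the non-lattice case, since $F$ is assumed to have a density, so does $\tilde{F}_u$, hence $P_G(S_n = \gamma) = 0$ and $\tilde{p}(G) = p(G)$; the two non-lattice limits in the theorem then follow directly from \eqref{asymptotic_original_nonlattice} (for $F$) and Lemma \ref{asymptotic_truncated}(1) (for $\tilde{F}_u$), and the discrepancy $\tilde{p}(\tilde{F}_u) - \tilde{p}(F) = o(\tilde{p}(F))$ is exactly the statement of Theorem \ref{light}.

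In the lattice case, everything reduces to showing
$$\theta^* \sqrt{\psi''(\theta^*)\, 2\pi n}\; e^{n \theta^* b - n\psi(\theta^*)} P_G(S_n = nb) \longrightarrow \theta^* h \quad \text{for } G \in \{F, \tilde{F}_u\}.$$
Combined with Theorem \ref{light} and Lemma \ref{asymptotic_truncated}(2), and the identity
$$\frac{\theta^* h e^{-\theta^* h}}{1 - e^{-\theta^* h}} + \theta^* h = \frac{\theta^* h}{1 - e^{-\theta^* h}},$$
this gives the stated limits for $\tilde{p}(F)$ and $\tilde{p}(\tilde{F}_u)$, and the discrepancy statement follows since both share the same leading order $e^{-nI}/\sqrt{n}$. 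The local large deviations asymptotic for $G = F$ is classical: tilting by $\theta^*$ gives a law with mean $b$ and variance $\psi''(\theta^*)$, and Gnedenko's local limit theorem for lattice sums yields $P_{F^{\theta^*}}(S_n = nb) \sim h / \sqrt{2\pi n \psi''(\theta^*)}$, which untilts to $P_F(S_n = nb) \sim h\, e^{-nI}/\sqrt{2\pi n \psi''(\theta^*)}$, as required.

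The main obstacle is the corresponding asymptotic for $G = \tilde{F}_u$, where after tilting by $\theta^*_u$ (solving $\psi_u'(\theta^*_u) = b$) the summands form a triangular array depending on $n$ through $u = u(n)$. A triangular-array local limit theorem is needed, and it would be derived by paralleling the characteristic-function estimates already developed for Lemma \ref{asymptotic_truncated}: the local expansion near zero uses $\theta^*_u \to \theta^*$ and $\psi_u''(\theta^*_u) \to \psi''(\theta^*)$ under condition \eqref{restrictions_on_un}, while the uniform bound away from zero on the period $[-\pi/h,\pi/h]\setminus(-\delta,\delta)$ follows because the tilted truncated characteristic function differs from the tilted $F$ characteristic function by a term of order $e^{-\theta' u}$, which is $o(1/n)$ by \eqref{restrictions_on_un}. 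Once this local limit is established, the remainder of the proof is a routine bookkeeping of $o(1)$ terms, combining it additively with the integral asymptotic of Lemma \ref{asymptotic_truncated}(2).
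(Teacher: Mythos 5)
Your proposal is mathematically sound but takes a genuinely different route from the paper. You decompose $\tilde{p}(G) = p(G) + P_G(S_n = \gamma)$ and reduce the lattice case to a local large deviations asymptotic for the point mass $P_G(S_n = nb)$, recombining with Lemma~\ref{asymptotic_truncated}(2) via the algebraic identity $\frac{\theta^* h e^{-\theta^* h}}{1-e^{-\theta^*h}} + \theta^* h = \frac{\theta^* h}{1-e^{-\theta^*h}}$ (which is correct). The paper instead works directly with the representation $\tilde{p}(\tilde{F}_u) = $ an integral against $H_n(x/\lambda_n) - H_n(a_n-)$ (closed left endpoint yielding the left limit at $a_n$), then exploits that the $\sup_x$-uniform Berry--Ess\'{e}en expansion of $H_n(\cdot)$ established in Lemma~\ref{edgeworth_expansion_tri_array} automatically transfers to $H_n(\cdot -)$ by taking left limits in the expansion, with the only change being $S(\cdot) \mapsto S(\cdot -)$. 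This is equivalent in spirit --- the jump of $H_n$ at $a_n$ is exactly the tilted local probability at $nb$ --- but the paper's route is more economical: it extracts the lattice correction for free from work already done, whereas your approach requires you to separately establish a local limit theorem for the triangular array of truncated tilted summands. You correctly identify that obstacle and sketch how it would go (near-zero expansion via $\psi_u''(\theta_u^*)\to\psi''(\theta^*)$, away-from-zero bound via the $o(1/n)$ proximity of tilted truncated and untruncated characteristic functions), but you would also need to check that the exponential prefactor $e^{n\psi_u(\theta_u^*) - n\theta_u^* b}$ agrees with $e^{-nI}$ up to $1+o(1)$, which requires showing $n[(\psi_u(\theta_u^*) - \theta_u^* b) - (\psi(\theta^*) - \theta^* b)] \to 0$; this follows from Lemma~\ref{implication_of_un} plus an implicit-function argument for $\theta_u^* - \theta^* = o(1/n)$, but since you tilt by $\theta_u^*$ rather than by the fixed $\theta^*$ (the paper always tilts by $\theta^*$ and absorbs the mean mismatch into $\mu_n$), it is an extra step you would need to spell out. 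Your non-lattice reduction (density $\Rightarrow P_G(S_n = \gamma) = 0$, so $\tilde{p}=p$) is immediate under the paper's standing density assumption and is simpler than the paper's left-limit argument there.
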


\section{Proofs.}
\label{sec:appendix}

\begin{proof}
{Proof of Theorem \ref{heavy}:}Recall that
\[
p(\tilde{F}_{u})-p(F)=\frac{P(S_{n}>\gamma,X_{i}\leq u\ \forall i=1,\ldots,n)}{F(u)^{n}}-P(S_{n}>\gamma).
\]
To prove the theorem, it suffices to show $P(S_{n}>\gamma,X_{i}\leq u\ \forall i=1,\ldots,n)/F(u)^{n}=o(P(S_{n}>\gamma)).$ Since $u\leq \mu+O((\gamma-n\mu)/\sqrt{\log n})$, there exists $\overline{M}\in (0,\infty)$ s.t. $u\leq \mu+\overline{M}(\gamma-n\mu)/\sqrt{\log n}$ for large $n$, We first let $u=\mu+\overline{M}(\gamma-n\mu)/\sqrt{\log n}$ and define $\sigma^{2}$ as the variance of $X$. We will show $P(S_{n}>\gamma,X_{i}\leq u\ \forall i=1,\ldots,n)=o(P(S_{n}>\gamma))$ and $F(u)^{n}\rightarrow1$. By normalizing $X_{i}$'s, we have%
\begin{align}
& P(S_{n}>\gamma,X_{i}\leq u\ \forall i=1,\ldots,n)\nonumber\\
=& P\left(  \frac{S_{n}-n\mu }{\sigma}>\frac{\gamma-n\mu}{\sigma},\frac{X_{i}-\mu}{\sigma}\leq\frac{\overline{M}(\gamma-n\mu)}{\sigma\sqrt{\log n}}\ \forall i=1,\ldots,n\right) \nonumber\\
\le &  P\left(  \frac{S_{n}-n\mu }{\sigma}\ge \frac{\gamma-n\mu}{\sigma},\frac{X_{i}-\mu}{\sigma}\leq\frac{\overline{M}(\gamma-n\mu)}{\sigma\sqrt{\log n}}\ \forall i=1,\ldots,n\right).\label{normalization_heavy}
\end{align}
Note that the tail distribution function of the normalized random variable $Y=(X-\mu)/\sigma$ is
\[
\bar{F}_{Y}(x)=\bar{F}(\mu+\sigma x)=L(\mu+\sigma x)\frac{(\mu+\sigma x)^{-\alpha}}{x^{-\alpha}}x^{-\alpha}.
\]
By Karamata representation theorem (\cite{bingham1989regular} Theorem 1.3.1), we can see $L(\mu+\sigma x)$ is slowly varying and by Proposition 1.3.6 (iii) in \cite{bingham1989regular}, we know that $L(\mu+\sigma x)(\mu+\sigma x)^{-\alpha}/x^{-\alpha}$ is a slowly varying function. By applying equation (1.45) in \cite{nagaev1979large} (this equation also holds when $y_n=\overline{M}x_n/\alpha_n$ where $x_n,y_n,\alpha_n$ are defined below equation (1.26) in \cite{nagaev1979large}) to (\ref{normalization_heavy}), we have%
\begin{equation}
P(S_{n}>\gamma,X_{i}\leq u\ \forall i=1,\ldots,n)=o\left(  nP\left(\frac{X-\mu}{\sigma}>\frac{\gamma-n\mu}{\sigma}\right)  \right)  =o(n\bar{F}(\gamma-(n-1)\mu)).\label{asymptotic_appro1_heavy}%
\end{equation}
Similarly, by applying equation (1.25b) in \cite{nagaev1979large} to $(X_{i}-\mu)/\sigma$, we have%
\begin{equation}
n\bar{F}(\gamma+1-(n-1)\mu)(1+o(1))=P(S_{n}\geq\gamma+1)\leq P(S_{n}>\gamma)\leq P(S_{n}\geq\gamma)=n\bar{F}(\gamma-(n-1)\mu)(1+o(1)).\label{squeeze_inequality}%
\end{equation}
By Karamata representation theorem (\cite{bingham1989regular} Theorem 1.3.1), we have $L(\gamma+1-(n-1)\mu)/L(\gamma-(n-1)\mu)\rightarrow1$ and thus $\bar{F}(\gamma+1-(n-1)\mu)/\bar{F}(\gamma-(n-1)\mu)\rightarrow1$. Therefore, it follows from (\ref{squeeze_inequality}) that
\begin{equation}
P(S_{n}>\gamma)=n\bar{F}(\gamma-(n-1)\mu)(1+o(1)).\label{asymptotic_appro2_heavy}%
\end{equation}
Comparing (\ref{asymptotic_appro1_heavy}) and (\ref{asymptotic_appro2_heavy}),
we have%
\begin{equation}
P(S_{n}>\gamma,X_{i}\leq u\ \forall i=1,\ldots,n)=o(P(S_{n}>\gamma))\label{asymptotic_appro3_heavy}%
\end{equation}
To show $F(u)^{n}\rightarrow1$, recall that $(\gamma-n\mu)/\sqrt{\log n}=\omega(\sqrt{n})$ and $x^{-\lambda}L(x)\rightarrow0$ as $x\rightarrow \infty$ (\cite{bingham1989regular} Proposition 1.3.6 (v)). We have%
\begin{align*}
n\bar{F}(u)  & =nL\left(  \mu+\frac{\overline{M}(\gamma-n\mu)}{\sqrt{\log n}}\right)\left(  \mu+\frac{\overline{M}(\gamma-n\mu)}{\sqrt{\log n}}\right)  ^{-\alpha}\\
& =\left[  n\left(  \mu+\frac{\overline{M}(\gamma-n\mu)}{\sqrt{\log n}}\right)^{-\frac{\alpha+2}{2}}\right]  \left[  L\left(  \mu+\frac{\overline{M}(\gamma-n\mu)}{\sqrt{\log n}}\right)  \left(  \mu+\frac{\overline{M}(\gamma-n\mu)}{\sqrt{\log n}}\right)^{-\frac{\alpha-2}{2}}\right]  \rightarrow0,
\end{align*}
which implies $F(u)^{n}=(1-\bar{F}(u))^{n}\rightarrow1$. Combining it with (\ref{asymptotic_appro3_heavy}), we get
\begin{equation}
\frac{P(S_{n}>\gamma,X_{i}\leq u\ \forall i=1,\ldots,n)}{F(u)^{n}}=o(P(S_{n}>\gamma))\label{asymptotic_appro4_heavy}%
\end{equation}
when $u=\mu+\overline{M}(\gamma-n\mu)/\sqrt{\log n}$.

For $u\le \mu+\overline{M}(\gamma-n\mu)/\sqrt{\log n}$, since the truncated distribution $\tilde{F}_u(\cdot)$ stochastically dominates $\tilde{F}_{u^{\prime}}(\cdot)$, i.e., $\bar{\tilde{F}}_u(\cdot) \ge \bar{\tilde{F}}_{u^{\prime}}(\cdot)$ for any $u>u^{\prime}$, we must have, for given $\gamma$, $P(S_{n}>\gamma,X_{i}\leq u\ \forall i=1,\ldots,n)/F(u)^{n}$ is non-decreasing in $u$ by the property of usual stochastic order (\cite{shaked2007stochastic} Theorem 1.A.3.(b)). Thus (\ref{asymptotic_appro4_heavy}) holds for any $u\leq \mu+\overline{M}(\gamma-n\mu)/\sqrt{\log n}$. This concludes our proof. $\square$
\end{proof}

\begin{proof}{Proof of Theorem \ref{heavy_large_u}:}
By the same argument in the proof of Theorem \ref{heavy}, we can show $F(u)^{n}\rightarrow1$. By the formula of the approximation error in (\ref{interim}), it suffices to show
\begin{equation}
P(S_{n}>\gamma,\text{ at least one }X_{i}>u)=o(P(S_{n}>\gamma)).\label{asymptotic_appro5_heavy}%
\end{equation}
For $P(S_{n}>\gamma,$ at least one $X_{i}>u)$, we have the inequality%
\[
P(S_{n}>\gamma,\text{ at least one }X_{i}>u)\leq P(\text{at least one }X_{i}>u)\leq n\bar{F}(u).
\]
By the asymptotic property of $P(S_{n}>\gamma)$ in (\ref{asymptotic_appro2_heavy}), to prove (\ref{asymptotic_appro5_heavy}), it suffices to show%
\[
\frac{n\bar{F}(u)}{n\bar{F}(\gamma-(n-1)\mu)}=\frac{\bar{F}(u)}{\bar{F}(\gamma-(n-1)\mu)}\rightarrow0.
\]
Recall that $\bar{F}(\cdot)$ has the form (\ref{rv}). By the choice of $u=\omega((\gamma-n\mu)^{\beta}),\beta>1$ and the property $x^{\lambda}L(x)\rightarrow\infty,x^{-\lambda}L(x)\rightarrow0$ as $x\rightarrow\infty$ for any $\lambda>0$ (\cite{bingham1989regular} Proposition 1.3.6 (v)), we have%
\[
\frac{\bar{F}(u)}{\bar{F}(\gamma-(n-1)\mu)}=\left(  \frac{\gamma-(n-1)\mu}{u}\right)  ^{\alpha}\frac{L(u)}{L(\gamma-(n-1)\mu)}\rightarrow0,
\]
which proves (\ref{asymptotic_appro5_heavy}).$\square$
\end{proof}

\begin{proof}{Proofs of Theorem \ref{heavy_p_tilde} and Theorem \ref{heavy_large_u_p_tilde}:}
By the same arguments in the proof of Theorem \ref{heavy}, we can see (\ref{asymptotic_appro1_heavy}) and (\ref{asymptotic_appro2_heavy}) also hold when the probabilities are replaced by $P(S_{n}\ge\gamma,X_{i}\leq u\ \forall i=1,\ldots,n)$ and $P(S_{n}\ge\gamma)$ respectively. Therefore, the following analog of (\ref{asymptotic_appro3_heavy})
\begin{equation*}
P(S_{n}\ge\gamma,X_{i}\leq u\ \forall i=1,\ldots,n)=o(P(S_{n}\ge\gamma))
\end{equation*}
holds. All the remaining arguments in the proof of Theorem \ref{heavy} can be applied here, which proves Theorem \ref{heavy_p_tilde}. Theorem \ref{heavy_large_u_p_tilde} can be proved similarly. $\square$
\end{proof}

Now let us turn to the proofs of the light-tailed case. As we explained below Lemma \ref{asymptotic_truncated}, the proof is quite lengthy so we would like to provide a roadmap before we go into the details. Recall that the choice of $u$ depends on $n$. Therefore the truncated random variables $X_1,\ldots,X_n\overset{i.i.d}{\sim} \tilde{F}_u$ are actually a triangular array when $n$ varies. Lemma \ref{asymptotic_truncated} essentially establishes the exact asymptotics for the triangular array (in the special case of truncated distribution with varying truncation levels). Theorem 3.7.4 in \cite{dembozeitouni1998} gives the exact asymptotics in the case of i.i.d. random variables. We will use the same techniques there to prove Lemma \ref{asymptotic_truncated}. A key technique in their proof is the Berry-Ess\'{e}en expansion up to order $\sqrt{n}$ which is well-known in the i.i.d. case and we need to make much more efforts to establish a similar expansion for the truncated distributions with varying truncation levels (i.e., Lemma \ref{edgeworth_expansion_tri_array}). Since the proofs of the light-tailed case require more notations, we use $u_{n}$ instead of $u$ to make our proofs more understandable. For convenience, we abbreviate the log moment generating function $\psi_{u_{n}}$ of $\tilde{F}_{u_{n}}$ to $\psi_{n}$.

Before proving Lemma \ref{asymptotic_truncated}, we need an ancillary lemma about the convergence rates of $\psi_{n}^{\prime}$ and $\psi_{n}$ to $\psi^{\prime}$ and $\psi$ at the point $\theta^*$.

\begin{lemma}[Convergence rates of $\psi_{n}^{\prime}$ and $\psi_{n}$]
\label{implication_of_un} If the truncation level $u_{n}$ satisfies condition (\ref{restrictions_on_un}), then we have
\[
\lim_{n\rightarrow\infty}n(\psi_{n}^{\prime}(\theta^*)-\psi^{\prime}(\theta^*))=0,~\lim_{n\rightarrow\infty}n(\psi_{n}(\theta^*)-\psi(\theta^*))=0.
\]
\end{lemma}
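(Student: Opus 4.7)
The plan is to reduce both limits to Chernoff-type tail estimates of the form $E[e^{\theta^* X}\mathbf{1}\{X>u_n\}]$ and $E[Xe^{\theta^* X}\mathbf{1}\{X>u_n\}]$, using the integrability afforded by $\theta^*+\theta'\in\mathcal{D}^\circ$. First I would write $M(\theta)=e^{\psi(\theta)}$ and $M_n(\theta)=E[e^{\theta X}\mathbf{1}\{X\le u_n\}]$, so that $\psi_n(\theta)=\log M_n(\theta)-\log F(u_n)$, which yields
\begin{equation*}
\psi_n(\theta^*)-\psi(\theta^*)=\log\!\Bigl(\tfrac{M_n(\theta^*)}{M(\theta^*)}\Bigr)-\log F(u_n),\quad
\psi_n'(\theta^*)-\psi'(\theta^*)=\tfrac{M_n'(\theta^*)M(\theta^*)-M'(\theta^*)M_n(\theta^*)}{M_n(\theta^*)M(\theta^*)}.
\end{equation*}
Because condition (\ref{restrictions_on_un}) forces $u_n\to\infty$ (as $\theta'>0$), we have $F(u_n)\to1$, $M_n(\theta^*)\to M(\theta^*)>0$, so both differences are governed by the three residual quantities $\bar F(u_n)$, $M(\theta^*)-M_n(\theta^*)$, and $M'(\theta^*)-M_n'(\theta^*)$.

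The bulk of the work will be to derive tail bounds of order $e^{-\theta' u_n}$ (up to polynomial factors) on these three residuals. Markov's inequality at exponent $\theta^*+\theta'$ gives the two straightforward ones,
\begin{equation*}
\bar F(u_n)\le e^{-(\theta^*+\theta')u_n}M(\theta^*+\theta'),\qquad
M(\theta^*)-M_n(\theta^*)\le e^{-\theta' u_n}M(\theta^*+\theta'),
\end{equation*}
both $O(e^{-\theta' u_n})$, where the second follows by inserting $e^{-\theta' X}e^{(\theta^*+\theta')X}$ inside the expectation and using $X>u_n$. The hard part will be the derivative gap $M'(\theta^*)-M_n'(\theta^*)=E[Xe^{\theta^* X}\mathbf{1}\{X>u_n\}]$, since the linear factor $X$ is not dominated by $e^{\theta' X}$ alone. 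To absorb it, I plan to use openness of $\mathcal{D}^\circ$ to pick $\eta>0$ with $\theta^*+\theta'+\eta\in\mathcal{D}^\circ$, and write
\begin{equation*}
Xe^{\theta^* X}\mathbf{1}\{X>u_n\}=\bigl(Xe^{-(\theta'+\eta)X}\bigr)e^{(\theta^*+\theta'+\eta)X}\mathbf{1}\{X>u_n\}.
\end{equation*}
Since $x\mapsto xe^{-(\theta'+\eta)x}$ is decreasing on $[u_n,\infty)$ for large $u_n$, the first factor is bounded by $u_ne^{-(\theta'+\eta)u_n}$, yielding $M'(\theta^*)-M_n'(\theta^*)=O(u_ne^{-(\theta'+\eta)u_n})$.

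Finally, I would multiply each of the three estimates by $n$ and invoke (\ref{restrictions_on_un}). The first two give $n\bar F(u_n)\to 0$ and $n(M(\theta^*)-M_n(\theta^*))\to 0$ immediately. For the derivative estimate, the slack $\eta>0$ is essential, as
\begin{equation*}
n\cdot u_ne^{-(\theta'+\eta)u_n}=\tfrac{n}{e^{\theta' u_n}}\cdot u_n e^{-\eta u_n}\longrightarrow 0,
\end{equation*}
using $u_n\to\infty$. Plugging the three $o(1/n)$ estimates into the two displayed decompositions and applying $\log(1+x)=x+O(x^2)$ to the logarithmic terms yields $n(\psi_n(\theta^*)-\psi(\theta^*))\to 0$ and $n(\psi_n'(\theta^*)-\psi'(\theta^*))\to 0$, as desired.
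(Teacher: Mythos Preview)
Your proposal is correct and follows essentially the same Chernoff-bound approach as the paper. The only difference is in bounding $E[Xe^{\theta^*X}\mathbf{1}\{X>u_n\}]$: the paper simply writes this as at most $e^{-\theta'u_n}E[Xe^{(\theta^*+\theta')X}\mathbf{1}\{X>0\}]$ and observes that the latter expectation is finite because $\theta^*+\theta'\in\mathcal{D}^\circ$ (so the moment generating function is differentiable there), which avoids your detour through the extra exponent~$\eta$.
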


\begin{proof}{Proof of Lemma \ref{implication_of_un}:}
First, let us prove
\[
\lim_{n\rightarrow\infty}n(\psi_{n}^{\prime}(\theta^*)-\psi^{\prime}(\theta^*))=0.
\]
By direct calculation, we have%
\begin{align*}
n(\psi_{n}^{\prime}(\theta^*)-\psi^{\prime}(\theta^*)) &=n\left(  \frac{E[Xe^{\theta^*X}|X\leq u_{n}]}{E[e^{\theta^*X}|X\leq u_{n}]}-\frac{E[Xe^{\theta^*X}]}{E[e^{\theta^*X}]}\right)  \\
&  =n\left(  \frac{E[Xe^{\theta^*X}I(X\leq u_{n})]}{E[e^{\theta^*X}I(X\leq u_{n})]}-\frac{E[Xe^{\theta^*X}]}{E[e^{\theta^*X}]}\right)  \\
&  =n\frac{E[Xe^{\theta^*X}]E[e^{\theta^*X}I(X>u_{n})]-E[e^{\theta^*X}]E[Xe^{\theta^*X}I(X>u_{n})]}{E[e^{\theta^*X}I(X\leq u_{n})]E[e^{\theta^*X}]}.
\end{align*}
Note that%
\[
0\leq nE[e^{\theta^*X}I(X>u_{n})]\leq\frac{n}{e^{\theta^{\prime}u_{n}}}E[e^{(\theta^*+\theta^{\prime})X}I(X>u_{n})]\leq\frac{n}{e^{\theta^{\prime}u_{n}}}
E[e^{(\theta^*+\theta^{\prime})X}],
\]
and%
\[
0\leq nE[Xe^{\theta^*X}I(X>u_{n})]\leq\frac{n}{e^{\theta^{\prime}u_{n}}}E[Xe^{(\theta^*+\theta^{\prime})X}I(X>u_{n})]\leq\frac{n}{e^{\theta^{\prime}u_{n}}}E[Xe^{(\theta^*+\theta^{\prime})X}I(X>0)],
\]
when $n$ is large enough such that $u_{n}>0$. Since $\theta^*+\theta^{\prime}\in\mathcal{D}^{\circ}$, we have
\[
E[e^{(\theta^*+\theta^{\prime})X}]<\infty,\text{ }E[Xe^{(\theta^*+\theta^{\prime})X}I(X>0)]<\infty.
\]
Thus condition (\ref{restrictions_on_un}) implies
\begin{equation}
\lim_{n\rightarrow\infty}nE[e^{\theta^*X}I(X>u_{n})]=\lim_{n\rightarrow\infty}nE[Xe^{\theta^*X}I(X>u_{n})]=0,\label{limit_of_tail_expectation_of_MGF}%
\end{equation}
which further implies
\[
\lim_{n\rightarrow\infty}n(\psi_{n}^{\prime}(\theta^*)-\psi^{\prime}(\theta^*))=0.
\]

Then let us prove
\[
\lim_{n\rightarrow\infty}n(\psi_{n}(\theta^*)-\psi(\theta^*))=0.
\]
By direct calculation, we have%
\begin{align*}
n(\psi_{n}(\theta^*)-\psi(\theta^*)) &  =n(\log E[e^{\theta^*X}|X\leq u_{n}]-\log E[e^{\theta^*X}])\\
&  =n(\log E[e^{\theta^*X}I(X\leq u_{n})]-\log P(X\leq u_{n})-\log E[e^{\theta^*X}])\\
&  =n\log\left(  \frac{E[e^{\theta^*X}I(X\leq u_{n})]}{E[e^{\theta^*X}]}\right)  -n\log P(X\leq u_{n})\\
&  =n\log\left(  1-\frac{E[e^{\theta^*X}I(X>u_{n})]}{E[e^{\theta^*}X]}\right)  -n\log(1-P(X>u_{n})).
\end{align*}
Notice that $\theta^*+\theta^{\prime}\in\mathcal{D}^{\circ}$ and $\theta^*>0$ implies $\theta^{\prime}\in\mathcal{D}^{\circ}$, i.e., $Ee^{\theta^{\prime}X}<\infty$. Then by the similar argument for (\ref{limit_of_tail_expectation_of_MGF}), we have
\begin{equation}
\lim_{n\rightarrow\infty}nP(X>u_{n})=0.\label{limit_of_tail_probability}%
\end{equation}
Thus (\ref{limit_of_tail_expectation_of_MGF}) and (\ref{limit_of_tail_probability}) implies%
\begin{align*}
\lim_{n\rightarrow\infty}n(\psi_{n}(\theta^*)-\psi(\theta^*)) & =\lim_{n\rightarrow\infty}n\log\left(  1-\frac{E[e^{\theta^*X}I(X>u_{n})]}{E[e^{\theta^*X}]}\right)  -\lim_{n\rightarrow\infty}n\log(1-P(X>u_{n}))\\
&  =-\lim_{n\rightarrow\infty}n\frac{E[e^{\theta^*X}I(X>u_{n})]}{E[e^{\theta^*X}]}+\lim_{n\rightarrow\infty}nP(X>u_{n})\\
&  =0.
\end{align*}
$\square$
\end{proof}

\begin{proof}{Proof of Lemma \ref{asymptotic_truncated}:}
By a change of measure, we define a new distribution $F_{n}^{(\theta^*)}$ by%
\begin{equation}
dF_{n}^{(\theta^*)}(x)=e^{\theta^*x-\psi_{n}(\theta^*)}d\tilde{F}_{u_{n}}(x). \label{change_of_measure}%
\end{equation}
Let $\{X_{n,m}^{(\theta^*)},n\geq1,1\leq m\leq n\}$ be a triangular sequence of independent random variables where $X_{n,m}^{(\theta^*)}$ has the distribution $F_{n}^{(\theta^*)}$and then we define a centered random variable $Y_{n,m}=(X_{n,m}^{(\theta^*)}-b-\mu_{n})/\sqrt{\psi^{\prime\prime}(\theta^*)}$ where%
\begin{equation}
\mu_{n}=E[X_{n,1}^{(\theta^*)}-b]=\psi_{n}^{\prime}(\theta^*)-b=\psi_{n}^{\prime}(\theta^*)-\psi^{\prime}(\theta^*).\label{mu_n}
\end{equation}
Let $H_{n}$ denote the distribution function of $W_{n}=\sum_{m=1}^{n}Y_{n,m}/\sqrt{n}$. Then it follows that%
\begin{align*}
p(\tilde{F}_{u_{n}})  &  =P(S_{n}>bn|X_{i}\leq u_{n},i=1,\ldots,n)\\
&  =\int_{\mathbb{R}^{n}}I\left(  \sum_{i=1}^{n}x_{i}>bn\right)  d\tilde{F}_{u_{n}}(x_{1})d\tilde{F}_{u_{n}}(x_{2})\cdots d\tilde{F}_{u_{n}}(x_{n})\\
&  =\int_{\mathbb{R}^{n}}I\left(  \sum_{i=1}^{n}x_{i}>bn\right)e^{n\psi_{n}(\theta^*)-\theta^*\sum_{i=1}^{n}x_{i}}dF_{n}^{(\theta^*)}(x_{1})dF_{n}^{(\theta^*)}(x_{2})\cdots dF_{n}^{(\theta^*)}(x_{n})\\
&  =E\left[  e^{n\psi_{n}(\theta^*)-\theta^*\sum_{i=1}^{n}X_{n,i}^{(\theta^*)}}I\left(  \sum_{i=1}^{n}X_{n,i}^{(\theta^*)}>bn\right)  \right] \\
&  =E[e^{n\psi_{n}(\theta^*)-\theta^*(\sqrt{\psi^{\prime\prime}(\theta^*)n}W_{n}+n(b+\mu_{n}))}I(\sqrt{\psi^{\prime\prime}(\theta^*)n}W_{n}+n(b+\mu_{n})>bn)]\\
&  =e^{n\psi_{n}(\theta^*)-\theta^*n(b+\mu_{n})}\int_{(-\sqrt{n}\mu_{n}/\sqrt{\psi^{\prime\prime}(\theta^*)},\infty)}e^{-\theta^*\sqrt{\psi^{\prime\prime}(\theta^*)n}x}dH_{n}(x).
\end{align*}
For simplicity, we define $\lambda_{n}=\theta^*\sqrt{\psi^{\prime\prime}(\theta^*)n}$ and $a_{n}=-\sqrt{n}\mu_{n}/\sqrt{\psi^{\prime\prime}(\theta^*)}$. By the integration by parts, we obtain
\begin{align}
\theta^*\sqrt{\psi^{\prime\prime}(\theta^*)2\pi n}e^{\theta^*n(b+\mu_{n})-n\psi_{n}(\theta^*)}p(\tilde{F}_{u_{n}})  &=\sqrt{2\pi}\lambda_{n}\int_{(a_{n},\infty)}e^{-\lambda_{n}x}dH_{n}(x) \nonumber\\
&  =\sqrt{2\pi}\lambda_{n}^{2}\int_{(a_{n},\infty)}(H_{n}(x)-H_{n}(a_{n}))e^{-\lambda_{n}x}dx \nonumber\\
&  =\sqrt{2\pi}\lambda_{n}\int_{(-\theta^*n\mu_{n},\infty)}\left(H_{n}\left(  \frac{x}{\lambda_{n}}\right)  -H_{n}(a_{n})\right)  e^{-x}dx.\label{integration_by_parts}
\end{align}

(1): When $X$ is non-lattice, we can get the following Berry-Ess\'{e}en expansion of $H_{n}(x)$
\begin{equation}
\lim_{n\rightarrow\infty}\left(  \sqrt{n}\sup_{x}\left\vert H_{n}(x)-\Phi\left(  \frac{x}{\sigma_{n}}\right)  -\frac{m_{3,n}}{6\sigma_{n}^{3}\sqrt{n}}\left(  1-\left(  \frac{x}{\sigma_{n}}\right)  ^{2}\right)\phi\left(  \frac{x}{\sigma_{n}}\right)  \right\vert \right)  =0,
\label{equ_edgeworth_expansion_tri_array}%
\end{equation}
where
\[
\sigma_{n}^{2}=EY_{n,1}^{2}=\frac{\psi_{n}^{\prime\prime}(\theta^*)}{\psi^{\prime\prime}(\theta^*)},
\]%
\[
m_{3,n}=EY_{n,1}^{3}=\frac{\psi_{n}^{\left(  3\right)  }(\theta^*)}{\sqrt{\psi^{\prime\prime}(\theta^*)}^{3}},
\]
and $\phi\left(  x\right)  ,\Phi\left(  x\right)  $ are the density and distribution function of the standard normal distribution respectively. The proof of (\ref{equ_edgeworth_expansion_tri_array}) is deferred to Lemma \ref{edgeworth_expansion_tri_array}.

Define%
\begin{align*}
&  c_{n}=\sqrt{2\pi}\lambda_{n}\int_{(-\theta^*n\mu_{n},\infty)}\left[\Phi\left(  \frac{x}{\sigma_{n}\lambda_{n}}\right)  +\frac{m_{3,n}}{6\sigma_{n}^{3}\sqrt{n}}\left(  1-\left(  \frac{x}{\sigma_{n}\lambda_{n}}\right)  ^{2}\right)  \phi\left(  \frac{x}{\sigma_{n}\lambda_{n}}\right)\right. \\
&  \left.  -\Phi\left(  \frac{a_{n}}{\sigma_{n}}\right)  -\frac{m_{3,n}}{6\sigma_{n}^{3}\sqrt{n}}\left(  1-\left(  \frac{a_{n}}{\sigma_{n}}\right)^{2}\right)  \phi\left(  \frac{a_{n}}{\sigma_{n}}\right)  \right]  e^{-x}dx.
\end{align*}
By Lemma \ref{implication_of_un} and formula of $\mu_n$ in (\ref{mu_n}), we have 
\begin{equation}
\lim_{n\rightarrow\infty}n\mu_{n}=\lim_{n\rightarrow\infty}n(\psi_{n}^{\prime}(\theta^*)-\psi^{\prime}(\theta^*))=0. \label{limit_of_mu_n}%
\end{equation}
By (\ref{limit_of_mu_n}) and (\ref{equ_edgeworth_expansion_tri_array}), we can see%
\begin{equation}
\lim_{n\rightarrow\infty}\left\vert \theta^*\sqrt{\psi^{\prime\prime}(\theta^*)2\pi n}e^{\theta^*n(b+\mu_{n})-n\psi_{n}(\theta^*)}p(\tilde{F}_{u_{n}})-c_{n}\right\vert =0.
\label{limit_relation_between_truncated_probability_and_c_n}%
\end{equation}
Therefore the asymptotic property of $p(\tilde{F}_{u_{n}})$ can be obtained via the limit of $c_{n}$.

To get the limit of $c_{n}$, we divide it into two parts:%
\begin{align*}
&  c_{n}=\sqrt{2\pi}\lambda_{n}\int_{(-\theta^*n\mu_{n},\infty)}\frac{m_{3,n}}{6\sigma_{n}^{3}\sqrt{n}}\left[\left(  1-\left(  \frac{x}{\sigma_{n}\lambda_{n}}\right)  ^{2}\right)  \phi\left(  \frac{x}{\sigma_{n}\lambda_{n}}\right)  -\left(  1-\left(\frac{a_{n}}{\sigma_{n}}\right)  ^{2}\right)  \phi\left(  \frac{a_{n}}{\sigma_{n}}\right)  \right]  e^{-x}dx\\
&  +\sqrt{2\pi}\lambda_{n}\int_{(-\theta^*n\mu_{n},\infty)}\left[\Phi\left(  \frac{x}{\sigma_{n}\lambda_{n}}\right)  -\Phi\left(  \frac{a_{n}}{\sigma_{n}}\right)  \right]  e^{-x}dx.
\end{align*}
For the first term, we know that $m_{3,n}\rightarrow\psi^{\left(  3\right)}(\theta^*)/\sqrt{\psi^{\prime\prime}(\theta^*)}^{3}$, $\sigma_{n}^{2}\rightarrow1$, $n\mu_{n}\rightarrow0$ and $\sup_{x}|  \left(1-x^{2}\right)  \phi\left(  x\right)|  <\infty$. Besides, recall that $\lambda_{n}=\theta^*\sqrt{\psi^{\prime\prime}(\theta^*)n}\rightarrow\infty$ and $a_{n}=-\sqrt{n}\mu_{n}/\sqrt{\psi^{\prime\prime}(\theta^*)}\rightarrow0$. So by dominated convergence theorem, we have%
\begin{align*}
&  \sqrt{2\pi}\lambda_{n}\int_{(-\theta^*n\mu_{n},\infty)}\frac{m_{3,n}}{6\sigma_{n}^{3}\sqrt{n}}\left[\left(  1-\left(  \frac{x}{\sigma_{n}\lambda_{n}}\right)  ^{2}\right)  \phi\left(  \frac{x}{\sigma_{n}\lambda_{n}}\right)  -\left(  1-\left(\frac{a_{n}}{\sigma_{n}}\right)  ^{2}\right)  \phi\left(  \frac{a_{n}}{\sigma_{n}}\right)  \right]  e^{-x}dx\\
&  \rightarrow\sqrt{2\pi}\theta^*\int_{(0,\infty)}\left[  \frac{\psi^{\left(  3\right)  }(\theta^*)}{6\psi^{\prime\prime}(\theta^*)}\phi(0)-\frac{\psi^{\left(  3\right)  }(\theta^*)}{6\psi^{\prime\prime}(\theta^*)}\phi(0)\right]  e^{-x}dx=0
\end{align*}
as $n\rightarrow\infty$. For the second term, since $\sigma_{n}^{2}\rightarrow1$, $n\mu_{n}\rightarrow0$, $\lambda_{n}=\theta^*\sqrt{\psi^{\prime\prime}(\theta^*)n}\rightarrow\infty$, $\sqrt{n}a_{n}=-n\mu_{n}/\sqrt{\psi^{\prime\prime}(\theta^*)}\rightarrow0$ and $\sup_{x}|  \phi^{\prime}\left(  x\right) |  <\infty$, it follows by the second-order Taylor expansion of $\Phi(\cdot)$ and the dominated convergence theorem that
\begin{align*}
&  \lim_{n\rightarrow\infty}\left\{  \sqrt{2\pi}\lambda_{n}\int_{(-\theta^*n\mu_{n},\infty)}\left[  \Phi\left(  \frac{x}{\sigma_{n}\lambda_{n}}\right)  -\Phi\left(  \frac{a_{n}}{\sigma_{n}}\right)  \right]e^{-x}dx\right\} \\
&  =\lim_{n\rightarrow\infty}\left\{  \sqrt{2\pi}\lambda_{n}\int_{(-\theta^*n\mu_{n},\infty)}\left[  \phi\left(  \frac{a_{n}}{\sigma_{n}}\right)  \left(  \frac{x}{\sigma_{n}\lambda_{n}}-\frac{a_{n}}{\sigma_{n}}\right)  +\frac{1}{2}\phi^{\prime}\left(  \eta\right)  \left(  \frac{x}{\sigma_{n}\lambda_{n}}-\frac{a_{n}}{\sigma_{n}}\right)  ^{2}\right]e^{-x}dx\right\} \\
&  =\sqrt{2\pi}\phi\left(  0\right)  \int_{0}^{\infty}xe^{-x}dx=1,
\end{align*}
where $\eta$ lies between $x/\sigma_{n}\lambda_{n}$ and $a_{n}/\sigma_{n}$. The limits of the two terms of $c_{n}$ imply that $\lim_{n\rightarrow\infty}c_{n}=1$. Then by (\ref{limit_relation_between_truncated_probability_and_c_n}), we obtain%
\[
\lim_{n\rightarrow\infty}\theta^*\sqrt{\psi^{\prime\prime}(\theta^*)2\pi n}e^{\theta^*n(b+\mu_{n})-n\psi_{n}(\theta^*)}p(\tilde{F}_{u_{n}})=1.
\]
Finally by $n\mu_{n}\rightarrow0$ and $n(\psi_{n}(\theta^*)-\psi(\theta^*))\rightarrow0$ in Lemma \ref{implication_of_un}, we obtain%
\[
\lim_{n\rightarrow\infty}\theta^*\sqrt{\psi^{\prime\prime}(\theta^*)2\pi n}e^{\theta^*nb-n\psi(\theta^*)}p(\tilde{F}_{u_{n}})=1.
\]

(2): When $X$ has a lattice distribution, we can get the following analog of Berry-Ess\'{e}en expansion of $H_{n}(x)$%
\begin{align}
&  \lim_{n\rightarrow\infty}\left(  \sqrt{n}\sup_{x}\left\vert H_{n}(x)-\Phi\left(  \frac{x}{\sigma_{n}}\right)  -\frac{m_{3,n}}{6\sigma_{n}^{3}\sqrt{n}}\left(  1-\left(  \frac{x}{\sigma_{n}}\right)  ^{2}\right)\phi\left(  \frac{x}{\sigma_{n}}\right)  \right.  \right. \nonumber\\
&  \left.  \left.  -\frac{h}{\sqrt{\psi^{\prime\prime}(\theta^*)}\sqrt{n}\sigma_{n}}\phi\left(  \frac{x}{\sigma_{n}}\right)  S\left(\frac{(x\sqrt{\psi^{\prime\prime}(\theta^*)}+\mu_{n}\sqrt{n})}{h}\sqrt{n}\right)  \right\vert \right)=0,\label{equ_edgeworth_expansion_tri_array_lattice}%
\end{align}
where $S(x)$ is a right-continuous function with period $1$ defined by%
\[
S(x)=-x+\frac{1}{2},\quad0\leq x<1,
\]%
\[
\sigma_{n}^{2}=EY_{n,1}^{2}=\frac{\psi_{n}^{\prime\prime}(\theta^*)}{\psi^{\prime\prime}(\theta^*)},
\]%
\[
m_{3,n}=EY_{n,1}^{3}=\frac{\psi_{n}^{\left(  3\right)  }(\theta^*)}{\sqrt{\psi^{\prime\prime}(\theta^*)}^{3}},
\]
and $\phi\left(  x\right)  ,\Phi\left(  x\right)  $ are the density and distribution function of the standard normal distribution respectively. The proof of (\ref{equ_edgeworth_expansion_tri_array_lattice}) is deferred to Lemma \ref{edgeworth_expansion_tri_array}.

Thus, by the preceding argument for the non-lattice case, we have%
\begin{align}
&  \lim_{n\rightarrow\infty}\theta^*\sqrt{\psi^{\prime\prime}(\theta^*)2\pi n}e^{\theta^*nb-n\psi(\theta^*)}p(\tilde{F}_{u_{n}})\nonumber\\
&  =1+\lim_{n\rightarrow\infty}\sqrt{2\pi}\lambda_{n}\int_{(-\theta^*n\mu_{n},\infty)}\frac{h}{\sqrt{\psi^{\prime\prime}(\theta^*)}\sqrt{n}\sigma_{n}}\left[  \phi\left(  \frac{x}{\lambda_{n}\sigma_{n}}\right)S\left(  \frac{(x\sqrt{\psi^{\prime\prime}(\theta^*)}/\lambda_{n}+\mu_{n}\sqrt{n})}{h}\sqrt{n}\right)  \right. \nonumber\\
&  \left.  -\phi\left(  \frac{a_{n}}{\sigma_{n}}\right)  S\left(  \frac{(a_{n}\sqrt{\psi^{\prime\prime}(\theta^*)}+\mu_{n}\sqrt{n})}{h}\sqrt{n}\right)  \right]  e^{-x}dx,
\label{limit_relation_between_truncated_probability_and_c_n_lattice}%
\end{align}
where the additional term in (\ref{limit_relation_between_truncated_probability_and_c_n_lattice}) is due to the additional term in the Berry-Ess\'{e}en expansion (\ref{equ_edgeworth_expansion_tri_array_lattice}). Recall that $\lambda_{n}=\theta^*\sqrt{\psi^{\prime\prime}(\theta^*)n}$ and $a_{n}=-\sqrt{n}\mu_{n}/\sqrt{\psi^{\prime\prime}(\theta^*)}$. It follows that%
\begin{align*}
&  \lim_{n\rightarrow\infty}\sqrt{2\pi}\lambda_{n}\int_{(-\theta^*n\mu_{n},\infty)}\frac{h}{\sqrt{\psi^{\prime\prime}(\theta^*)}\sqrt{n}\sigma_{n}}\left[  \phi\left(  \frac{x}{\lambda_{n}\sigma_{n}}\right)S\left(  \frac{(x\sqrt{\psi^{\prime\prime}(\theta^*)}/\lambda_{n}+\mu_{n}\sqrt{n})}{h}\sqrt{n}\right)  \right. \\
&  \left.  -\phi\left(  \frac{a_{n}}{\sigma_{n}}\right)  S\left(  \frac{(a_{n}\sqrt{\psi^{\prime\prime}(\theta^*)}+\mu_{n}\sqrt{n})}{h}\sqrt{n}\right)  \right]  e^{-x}dx\\
&  =\lim_{n\rightarrow\infty}\sqrt{2\pi}\theta^*\int_{(-\theta^*n\mu_{n},\infty)}\frac{h}{\sigma_{n}}\left[  \phi\left(  \frac{x}{\lambda_{n}\sigma_{n}}\right)  S\left(  \frac{(x\sqrt{\psi^{\prime\prime}(\theta^*)}/\lambda_{n}+\mu_{n}\sqrt{n})}{h}\sqrt{n}\right)-\phi\left(  \frac{a_{n}}{\sigma_{n}}\right)  S\left(0\right)  \right]  e^{-x}dx.
\end{align*}
By dominated convergence theorem, $n\mu_{n}\rightarrow0$, $\sigma_{n}\rightarrow1$, $\lambda_{n}\rightarrow\infty$, and%
\[
S\left(  \frac{(x\sqrt{\psi^{\prime\prime}(\theta^*)}/\lambda_{n}+\mu_{n}\sqrt{n})}{h}\sqrt{n}\right)=S\left(  \frac{x}{\theta^*h}+\frac{n\mu_n}{h}\right)  \rightarrow S\left(  \frac{x}{\theta^*h}\right)  \text{, a.e.,}%
\]
we can get%
\begin{align*}
&  \lim_{n\rightarrow\infty}\sqrt{2\pi}\theta^*\int_{(-\theta^*n\mu_{n},\infty)}\frac{h}{\sigma_{n}}\left[  \phi\left(  \frac{x}{\lambda_{n}\sigma_{n}}\right)  S\left(  \frac{(x\sqrt{\psi^{\prime\prime}(\theta^*)}/\lambda_{n}+\mu_{n}\sqrt{n})}{h}\sqrt{n}\right)-\phi\left(  \frac{a_{n}}{\sigma_{n}}\right)  S\left(0\right)  \right]  e^{-x}dx\\
&  =\sqrt{2\pi}h\theta^*\int_{0}^{\infty}\left[  \phi\left(  0\right)S\left(  \frac{x}{\theta^*h}\right)  -\phi\left(  0\right)  S\left(0\right)  \right]  e^{-x}dx\\
&  =h\theta^*\sum_{j=0}^{\infty}\int_{j\theta^*h}^{(j+1)\theta^*h}\left[  S\left(  \frac{x}{\theta^*h}\right)  -S\left(0\right)  \right]  e^{-x}dx\\
&  =-\sum_{j=0}^{\infty}e^{-j\theta^*h}\int_{0}^{\theta^*h}e^{-x}xdx=\frac{\theta^*he^{-\theta^*h}}{1-e^{-\theta^*h}}-1.
\end{align*}
Plugging it into (\ref{limit_relation_between_truncated_probability_and_c_n_lattice}), we can get%
\[
\lim_{n\rightarrow\infty}\theta^*\sqrt{\psi^{\prime\prime}(\theta^*)2\pi n}e^{\theta^*nb-n\psi(\theta^*)}p(\tilde{F}_{u_{n}})=\frac{\theta^*he^{-\theta^*h}}{1-e^{-\theta^*h}}.
\]
$\square$
\end{proof}

\begin{lemma}[Berry-Ess\'{e}en expansion for the truncated distribution]
\label{edgeworth_expansion_tri_array} Following the notations in the proof of Lemma \ref{asymptotic_truncated}, the following hold.

\begin{description}
\item[(1)] For the non-lattice case, we have%
\[
\lim_{n\rightarrow\infty}\left(  \sqrt{n}\sup_{x}\left\vert H_{n}(x)-\Phi\left(  \frac{x}{\sigma_{n}}\right)  -\frac{m_{3,n}}{6\sigma_{n}^{3}\sqrt{n}}\left(  1-\left(  \frac{x}{\sigma_{n}}\right)  ^{2}\right)\phi\left(  \frac{x}{\sigma_{n}}\right)  \right\vert \right)  =0.
\]

\item[(2)] For the lattice case, we have%
\begin{align*}
&  \lim_{n\rightarrow\infty}\left(  \sqrt{n}\sup_{x}\left\vert H_{n}(x)-\Phi\left(  \frac{x}{\sigma_{n}}\right)  -\frac{m_{3,n}}{6\sigma_{n}^{3}\sqrt{n}}\left(  1-\left(  \frac{x}{\sigma_{n}}\right)  ^{2}\right)\phi\left(  \frac{x}{\sigma_{n}}\right)  \right.  \right. \\
&  \left.  \left.  -\frac{h}{\sqrt{\psi^{\prime\prime}(\theta^*)}\sqrt{n}\sigma_{n}}\phi\left(  \frac{x}{\sigma_{n}}\right)  S\left(\frac{(x\sqrt{\psi^{\prime\prime}(\theta^*)}+\mu_{n}\sqrt{n})}{h}\sqrt{n}\right)  \right\vert \right)  =0.
\end{align*}

\end{description}
\end{lemma}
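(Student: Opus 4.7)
The plan is to follow the classical characteristic-function route to Berry-Ess\'een-type expansions, with the central difficulty being uniformity in $n$ to accommodate the triangular-array structure of $\{Y_{n,m}\}$. Write $\varphi_n(t) = E[e^{itY_{n,1}}]$, so that the characteristic function of $W_n$ is $\varphi_n(t/\sqrt n)^n$. Let $G_n$ denote the proposed Edgeworth approximation on the right-hand side of (1) or (2), and let $\gamma_n$ be its Fourier–Stieltjes transform (an explicit polynomial-in-$t$ perturbation of $e^{-\sigma_n^2 t^2/2}$, augmented by a trigonometric factor in the lattice case). Esseen's smoothing inequality bounds $\sup_x |H_n(x) - G_n(x)|$ by a constant multiple of $\int_{-T}^{T} |\varphi_n(t/\sqrt n)^n - \gamma_n(t)|/|t|\, dt + C/T$. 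Taking $T = c\sqrt n$, the lemma reduces to showing that this integral is $o(1/\sqrt n)$, and I would split the range into a small-$|t|$ region $\{|t|\leq\delta\sqrt n\}$ and a large-$|t|$ region.

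On the small-$|t|$ region I would Taylor-expand $\log\varphi_n(t/\sqrt n)$ about $0$ up to order $4$. Since $Y_{n,1}$ is centered, the expansion reads $\log\varphi_n(t/\sqrt n) = -\sigma_n^2 t^2/(2n) - i m_{3,n} t^3/(6 n^{3/2}) + O(t^4/n^2)$, with $\sigma_n^2\to 1$, $m_{3,n}$ convergent, and the remainder uniform in $n$. The uniform control follows from Lemma \ref{implication_of_un} together with the hypothesis $\theta^{*}+\theta^\prime \in \mathcal{D}^\circ$ in (\ref{restrictions_on_un}): $E[|X|^k e^{\theta^{*} X} I(X \leq u_n)]$ is bounded for $k\leq 4$, whence $\sigma_n^2$, $m_{3,n}$ and the fourth cumulant are uniformly bounded. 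Exponentiating and raising to the $n$-th power gives $\varphi_n(t/\sqrt n)^n = e^{-\sigma_n^2 t^2/2}\bigl(1 - i m_{3,n} t^3/(6\sqrt n) + O(t^4/n)\bigr)$ on this region, and its Fourier inversion matches $G_n$ up to an error that contributes $o(1/\sqrt n)$ to the smoothing integral.

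The main obstacle is the large-$|t|$ estimate: showing $\sup_{|t|\geq\delta}|\varphi_n(t)| \leq \rho(\delta) < 1$ uniformly in $n$, and that this propagates to $\int_{\delta\leq|t|\leq c\sqrt n} |\varphi_n(t/\sqrt n)|^n/|t|\, dt = o(1/\sqrt n)$. In the non-lattice case, the tilted truncated density $f_n^{(\theta^{*})}(x)\propto e^{\theta^{*} x} f(x) I(x \leq u_n)$ converges in $L^1$ to the tilted untruncated density $f^{(\theta^{*})}$, with error controlled by $E[e^{\theta^{*} X}I(X>u_n)]/E[e^{\theta^{*} X}]$, which by the argument underlying Lemma \ref{implication_of_un} is $o(1/n)$. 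Thus $\varphi_n\to\varphi$ uniformly on compact sets, where $\varphi$ is the characteristic function of the centered, normalized $f^{(\theta^{*})}$; since $|\varphi(t)|<1$ for $t\neq 0$ by non-latticity and $|\varphi(t)|\to 0$ as $|t|\to\infty$ by Riemann–Lebesgue (as $f$ is a density), standard compactness on each annulus combined with a uniform tail bound (inherited from the density estimate on $f_n^{(\theta^{*})}$) yields the required uniform bound on $\varphi_n$. In the lattice case, $|\varphi_n|$ is nearly periodic with period driven by the span $h$, so $|\varphi_n(t)|$ is uniformly bounded away from $1$ on any closed subinterval of one period omitting a neighborhood of $0$; summing the contributions of all frequency-aliased copies in Esseen's integral, as in Gnedenko–Kolmogorov, produces the additional periodic residual $\frac{h}{\sqrt{\psi''(\theta^{*})}\sqrt n\,\sigma_n}\phi(x/\sigma_n)\, S\bigl((x\sqrt{\psi''(\theta^{*})}+\mu_n\sqrt n)\sqrt n/h\bigr)$ in the expansion, with the drift $\mu_n\sqrt n = o(1)$ tracked via Lemma \ref{implication_of_un}, but retained inside $S$ because its period is of order one. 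Combining the small- and large-$|t|$ contributions through Esseen's inequality then delivers the two claimed $o(1/\sqrt n)$ uniform bounds.
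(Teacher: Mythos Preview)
Your proposal is correct and follows the paper's route: Esseen smoothing with $T\asymp\sqrt{n}$, Taylor expansion of $\log\varphi_n$ on the inner region via uniform-in-$n$ fourth-moment bounds, and uniform convergence $\varphi_n\to\varphi_\infty$ on the compact outer annulus $[\delta,c]$ (so your Riemann--Lebesgue step is unnecessary, since after rescaling the outer region is already compact). Two lattice-case subtleties the paper handles explicitly and your sketch elides: one must first verify that truncation preserves the span $h$ for all large $n$, and since $G_n$ inherits jumps from $S$ one needs the discrete smoothing lemma (Gnedenko--Kolmogorov, \S 39, Theorem 2) rather than the standard Esseen inequality for absolutely continuous $G$.
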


\begin{proof}{Proof of Lemma \ref{edgeworth_expansion_tri_array}:} (1): This proof is a modified version of the proof of Theorem 1 in Section XVI.4 of \cite{feller2008introduction} which proves the Berry-Ess\'{e}en expansion for the i.i.d. non-lattice distribution. We will prove it for triangular arrays (in the special case of truncated distributions with varying truncation levels).

Recall that $H_{n}$ is the distribution function of $W_{n}=\sum_{m=1}^{n}Y_{n,m}/\sqrt{n}$ and the first three moments of $Y_{n,m}$ are $EY_{n,1}=0$, $\sigma_{n}^{2}=EY_{n,1}^{2}=\psi_{n}^{\prime\prime}(\theta^*)/\psi^{\prime\prime}(\theta^*)$ and $m_{3,n}=EY_{n,1}^{3}=\psi_{n}^{\left(  3\right)  }(\theta^*)/\sqrt{\psi^{\prime\prime}(\theta^*)}^{3}$. The existence of the limits $\lim_{n\rightarrow\infty}\sigma_{n}^{2}=1$ and $\lim_{n\rightarrow\infty}m_{3,n}=\psi^{\left(  3\right)  }(\theta^*)/\sqrt{\psi^{\prime\prime}(\theta^*)}^{3}$ implies that when $n$ is large enough, we will have
\begin{equation}
\sigma_{n}^{2}\geq\frac{3}{4}, \label{bound_of_sigma_n}%
\end{equation}
and
\begin{equation}
\left\vert m_{3,n}\right\vert \leq\left\vert \frac{\psi^{\left(  3\right)}(\theta^*)}{\sqrt{\psi^{\prime\prime}(\theta^*)}^{3}}\right\vert+1. \label{bound_of_m_3n}%
\end{equation}
Without loss of generality, we assume that (\ref{bound_of_sigma_n}) and (\ref{bound_of_m_3n}) hold for all $n\in\mathbb{N}$.

Put
\[
G_{n}(x)=\Phi\left(  \frac{x}{\sigma_{n}}\right)  +\frac{m_{3,n}}{6\sigma_{n}^{3}\sqrt{n}}\left(  1-\left(  \frac{x}{\sigma_{n}}\right)  ^{2}\right)\phi\left(  \frac{x}{\sigma_{n}}\right)  .
\]
Then $G_{n}\left(  x\right)  $ satisfies the conditions of Lemma 2 in Section XVI.3 of \cite{feller2008introduction} with%
\[
G_{n}^{\prime}(x)=\frac{1}{\sigma_{n}}\phi\left(  \frac{x}{\sigma_{n}}\right)+\frac{m_{3,n}}{6\sigma_{n}^{3}\sqrt{n}}\left[  \phi\left(  \frac{x}{\sigma_{n}}\right)  \left(  \frac{1}{\sigma_{n}}\left(  \frac{x}{\sigma_{n}}\right)  ^{3}-\frac{3}{\sigma_{n}}\frac{x}{\sigma_{n}}\right)  \right]  ,
\]
and%
\begin{equation}
\gamma_{n}(\xi)=e^{-\frac{1}{2}\sigma_{n}^{2}\xi^{2}}\left(  1+\frac{m_{3,n}}{6\sqrt{n}}(i\xi)^{3}\right)  . \label{gamma_n}%
\end{equation}
(\ref{bound_of_sigma_n}) and (\ref{bound_of_m_3n}) imply that%
\[
\left\vert G_{n}^{\prime}(x)\right\vert \leq\frac{2}{\sqrt{6\pi}}+\frac{4}{9\sqrt{3}}\left(  \left\vert \frac{\psi^{\left(  3\right)  }(\theta^*)}{\sqrt{\psi^{\prime\prime}(\theta^*)}^{3}}\right\vert+1\right)  \left[  \frac{2}{\sqrt{3}}\sup_{x}(|x|^{3}\phi(x))+2\sqrt{3}\sup_{x}(|x|\phi(x))\right]  <\infty.
\]

For any $\epsilon>0$, we use the inequality (3.13) in Section XVI.3 of \cite{feller2008introduction} with $T=a\sqrt{n}$ where the constant $a$ is chosen so large that
\[
\frac{2}{\sqrt{6\pi}}+\frac{4}{9\sqrt{3}}\left(  \left\vert \frac{\psi^{\left(  3\right)  }(\theta^*)}{\sqrt{\psi^{\prime\prime}(\theta^*)}^{3}}\right\vert +1\right)  \left[  \frac{2}{\sqrt{3}}\sup_{x}(|x|^{3}\phi(x))+2\sqrt{3}\sup_{x}(|x|\phi(x))\right]  <\frac{\pi}{24}\epsilon a.
\]
Then we will get%
\begin{equation}
|H_{n}(x)-G_{n}(x)|\leq\frac{1}{\pi}\int_{-a\sqrt{n}}^{a\sqrt{n}}\frac{1}{|\xi|}\left\vert \varphi_{n}^{n}\left(  \frac{\xi}{\sqrt{n}}\right)-\gamma_{n}(\xi)\right\vert d\xi+\frac{\epsilon}{\sqrt{n}},
\label{difference_of_Hn_and_Gn}%
\end{equation}
where $\varphi_{n}(\xi)=Ee^{i\xi Y_{n,1}}$ is the characteristic function of $Y_{n,1}$. Next, we will show
\begin{equation}
\frac{1}{\pi}\int_{-a\sqrt{n}}^{a\sqrt{n}}\frac{1}{|\xi|}\left\vert\varphi_{n}^{n}\left(  \frac{\xi}{\sqrt{n}}\right)  -\gamma_{n}(\xi)\right\vert d\xi\leq\frac{100\epsilon}{\sqrt{n}} \label{estimation_integral}%
\end{equation}
for $n$ large enough. To achieve this, we need some precise estimations for $\varphi_{n}^{n}(\xi/\sqrt{n})$, which will be divided into two parts.

Part I: In this part, we will show for any $\delta>0$, $\exists$ $N(\delta)\in\mathbb{N}$ s.t.
\begin{equation}
q:=\sup_{n\geq N(\delta),\delta\leq|\xi|\leq a}|\varphi_{n}(\xi)|<1.\label{bound_away_from_1}%
\end{equation}

On the one hand, recall that $Y_{n,1}=(X_{n,1}^{(\theta^*)}-b-\mu_{n})/\sqrt{\psi^{\prime\prime}(\theta^*)}$ and $X_{n,1}^{(\theta^*)}$ has the distribution $F_{n}^{(\theta^*)}$ defined in (\ref{change_of_measure}). Thus%
\begin{align}
\varphi_{n}(\xi)  &  =Ee^{i\xi\frac{X_{n,1}^{(\theta^*)}-b-\mu_{n}}{\sqrt{\psi^{\prime\prime}(\theta^*)}}}=e^{-i\xi\frac{b+\mu_{n}}{\sqrt{\psi^{\prime\prime}(\theta^*)}}}\int e^{i\xi\frac{x}{\sqrt{\psi^{\prime\prime}(\theta^*)}}}dF_{n}^{(\theta^*)}(x)=e^{-i\xi\frac{b+\mu_{n}}{\sqrt{\psi^{\prime\prime}(\theta^*)}}}\int e^{i\xi\frac{x}{\sqrt{\psi^{\prime\prime}(\theta^*)}}}e^{\theta^*x-\psi_{n}(\theta^*)}d\tilde{F}_{u_{n}}\left(  x\right) \nonumber\\
&  =e^{-i\xi\frac{b+\mu_{n}}{\sqrt{\psi^{\prime\prime}(\theta^*)}}}\frac{E\left[  \left.  \exp\left(  \left(  \theta^*+\frac{i\xi}{\sqrt{\psi^{\prime\prime}(\theta^*)}}\right)  X\right)  \right\vert X\leq u_{n}\right]  }{E[e^{\theta^*X}|X\leq u_{n}]}\nonumber\\
&  =e^{-i\xi\frac{b+\mu_{n}}{\sqrt{\psi^{\prime\prime}(\theta^*)}}}\frac{E\left[  \exp\left(  \left(  \theta^*+\frac{i\xi}{\sqrt{\psi^{\prime\prime}(\theta^*)}}\right)  X\right)  I(X\leq u_{n})\right]  }{E[e^{\theta^*X}I(X\leq u_{n})]} \label{varphi_n}%
\end{align}
On the other hand, recall that the distribution $F$ of $X$ is non-lattice. We introduce a new distribution%
\[
dF^{(\theta^*)}(x)=\frac{e^{\theta^*x}}{E[e^{\theta^*X}]}dF(x)
\]
and we can see $F^{(\theta^*)}$ and $F$ have the same support, which means $F^{(\theta^*)}$ is also a non-lattice distribution. Let $X_{\infty}^{(\theta^*)}$ be a random variable with distribution $F^{(\theta^*)}$. The characteristic function of $X_{\infty}^{(\theta^*)}/\sqrt{\psi^{\prime\prime}\left(  \theta^*\right)  }$ is given by%
\begin{equation}
\varphi_{\infty}(\xi)=Ee^{i\xi\frac{X_{\infty}^{(\theta^*)}}{\sqrt{\psi^{\prime\prime}(\theta^*)}}}=\int e^{i\xi\frac{x}{\sqrt{\psi^{\prime\prime}(\theta^*)}}}\frac{e^{\theta^*x}}{E[e^{\theta^*X}]}dF(x)=\frac{E\left[  \exp\left(  \left(  \theta^*+\frac{i\xi}{\sqrt{\psi^{\prime\prime}(\theta^*)}}\right)  X\right)  \right]}{E[e^{\theta^*X}]}. \label{varphi_infinity}%
\end{equation}
Since $F^{(  \theta^*)  }$ is non-lattice, we have $\sup_{\delta\leq|\xi|\leq a}|\varphi_{\infty}(\xi)|<1$ for any $\delta>0$. Comparing (\ref{varphi_n}) and (\ref{varphi_infinity}), we have%
\begin{align}
&  \left\vert \varphi_{n}(\xi)-e^{-i\xi\frac{b+\mu_{n}}{\sqrt{\psi^{\prime\prime}(\theta^*)}}}\varphi_{\infty}(\xi)\right\vert \nonumber\\
&  =\left\vert \frac{E\left[  \exp\left(  \left(  \theta^*+\frac{i\xi}{\sqrt{\psi^{\prime\prime}(\theta^*)}}\right)  X\right)  \right]E[e^{\theta^*X}I(X>u_{n})]-E[e^{\theta^*X}]E\left[  \exp\left(\left(  \theta^*+\frac{i\xi}{\sqrt{\psi^{\prime\prime}(\theta^*)}}\right)  X\right)  I(X>u_{n})\right]  }{E[e^{\theta^*X}I(X\leq u_{n})]E[e^{\theta^*X}]}\right\vert \nonumber\\
&  \leq\frac{2E[e^{\theta^*X}]E[e^{\theta^*X}I(X>u_{n})]}{E[e^{\theta^*X}I(X\leq u_{n})]E[e^{\theta^*X}]}=\frac{2E[e^{\theta^*X}I(X>u_{n})]}{E[e^{\theta^*X}I(X\leq u_{n})]}\rightarrow0 \label{uniform_convergence}%
\end{align}
uniformly as $n\rightarrow\infty$. Therefore, $\exists$ $N(\delta)\in\mathbb{N}$, s.t.
\[
\left\vert \varphi_{n}(\xi)-e^{-i\xi\frac{b+\mu_{n}}{\sqrt{\psi^{\prime\prime}(\theta^*)}}}\varphi_{\infty}(\xi)\right\vert <\frac{1-\sup_{\delta\leq|\xi|\leq a}|\varphi_{\infty}(\xi)|}{2}%
\]
for any $n\geq N\left(  \delta\right)  $ and $\xi\in\mathbb{R}$, which implies%
\[
\sup_{n\geq N(\delta),\delta\leq|\xi|\leq a}|\varphi_{n}(\xi)|\leq\sup_{\delta\leq|\xi|\leq a}|\varphi_{\infty}(\xi)|+\frac{1-\sup_{\delta\leq|\xi|\leq a}|\varphi_{\infty}(\xi)|}{2}=\frac{1+\sup_{\delta\leq|\xi|\leq a}|\varphi_{\infty}(\xi)|}{2}<1.
\]

Part II: In this part, we will show there exists $\delta>0$ and $N\in\mathbb{N}$ s.t.%
\begin{equation}
\frac{1}{|\xi|}\left\vert \varphi_{n}^{n}\left(  \frac{\xi}{\sqrt{n}}\right)-\gamma_{n}(\xi)\right\vert \leq e^{-\frac{1}{8}\xi^{2}}\left(  \frac{\epsilon}{\sqrt{n}}|\xi|^{2}+\frac{1}{72n}\left(  \left\vert \frac{\psi^{\left(  3\right)  }(\theta^*)}{\sqrt{\psi^{\prime\prime}(\theta^*)}^{3}}\right\vert +1\right)  ^{2}|\xi|^{5}\right),n\geq N,|\xi|\leq\delta\sqrt{n}.
\label{estimation_of_integrand}%
\end{equation}
Define the function\footnote{All logarithms of complex numbers used in the sequel are defined by the Taylor series $\log(1+z)=\sum_{n=1}^{\infty}\left(  -1\right)  ^{n+1}z^{n}/n$ which is valid for $\left\vert z\right\vert <1$.}%
\[
h_{n}(\xi)=\log\varphi_{n}(\xi)+\frac{1}{2}\sigma_{n}^{2}\xi^{2}.
\]
To ensure $h_{n}(\xi)$ is well defined, we note that
\begin{align}
|\varphi_{n}(\xi)-1|  & \leq\left\vert \varphi_{n}(\xi)-e^{-i\xi\frac{b+\mu_{n}}{\sqrt{\psi^{\prime\prime}(\theta^{\ast})}}}\varphi_{\infty}(\xi)\right\vert +\left\vert e^{-i\xi\frac{b+\mu_{n}}{\sqrt{\psi^{\prime\prime}(\theta^{\ast})}}}\varphi_{\infty}(\xi)-e^{-i\xi\frac{b}{\sqrt{\psi^{\prime\prime}(\theta^{\ast})}}}\varphi_{\infty}(\xi)\right\vert \nonumber\\
& +\left\vert e^{-i\xi\frac{b}{\sqrt{\psi^{\prime\prime}(\theta^{\ast})}}}\varphi_{\infty}(\xi)-1\right\vert \nonumber\\
& \leq\left\vert \varphi_{n}(\xi)-e^{-i\xi\frac{b+\mu_{n}}{\sqrt{\psi^{\prime\prime}(\theta^{\ast})}}}\varphi_{\infty}(\xi)\right\vert +\left\vert e^{-i\xi\frac{\mu_{n}}{\sqrt{\psi^{\prime\prime}(\theta^{\ast})}}}-1\right\vert +\left\vert e^{-i\xi\frac{b}{\sqrt{\psi^{\prime\prime}(\theta^{\ast})}}}\varphi_{\infty}(\xi)-1\right\vert \label{bound_phi_n_around_0}
\end{align}
The uniform convergence (\ref{uniform_convergence}) ensures the first term of (\ref{bound_phi_n_around_0}) converges to $0$ uniformly for $\xi\in\mathbb{R}$. By the convergence of $\mu_n$ to $0$ stated in (\ref{limit_of_mu_n}), the second term of (\ref{bound_phi_n_around_0}) converges to $0$ uniformly for bounded $\xi$. The last term of (\ref{bound_phi_n_around_0}) is continuous at $0$. Therefore, there exists $\delta>0$ and $N\in\mathbb{N}$ s.t. $|\varphi_{n}(\xi)-1|\leq1/2$ for $|\xi|\leq\delta$ and $n\geq N$. Hence $h_{n}(\xi)$ is well defined for $|\xi|\leq\delta$ and $n\geq N$. By means of $h_{n}(\xi)$, we can see%
\[
\frac{1}{|\xi|}\left\vert \varphi_{n}^{n}\left(  \frac{\xi}{\sqrt{n}}\right)-\gamma_{n}(\xi)\right\vert =e^{-\frac{1}{2}\sigma_{n}^{2}\xi^{2}}\frac{1}{|\xi|}\left\vert \exp\left(  nh_{n}\left(  \frac{\xi}{\sqrt{n}}\right)\right)  -1-\frac{m_{3,n}}{6\sqrt{n}}(i\xi)^{3}\right\vert .
\]
By direct calculation, we have $h_{n}(0)=h_{n}^{\prime}(0)=h_{n}^{\prime\prime}(0)=0$, $h_{n}^{\left(  3\right)  }(0)=i^{3}m_{3,n}$ and%
\begin{align*}
h_{n}^{\left(  4\right)  }(\xi)  &  =\frac{\varphi_{n}^{\left(  4\right)}(\xi)}{\varphi_{n}(\xi)}-\frac{\varphi_{n}^{\left(  3\right)  }(\xi)\varphi_{n}^{\prime}(\xi)}{(\varphi_{n}(\xi))^{2}}-3\frac{\varphi_{n}^{\left(  3\right)  }(\xi)\varphi_{n}^{\prime}(\xi)+(\varphi_{n}^{\prime\prime}(\xi))^{2}}{(\varphi_{n}(\xi))^{2}}\\
&  +6\frac{\varphi_{n}^{\prime\prime}(\xi)(\varphi_{n}^{\prime}(\xi))^{2}}{(\varphi_{n}(\xi))^{3}}+6\left(  \frac{\varphi_{n}^{\prime}(\xi)}{\varphi_{n}(\xi)}\right)  ^{2}\frac{\varphi_{n}^{\prime\prime}(\xi)\varphi_{n}(\xi)-(\varphi_{n}^{\prime}(\xi))^{2}}{(\varphi_{n}(\xi))^{2}}.
\end{align*}
Let us show the boundness of $h_{n}^{\left(  4\right)  }(\xi)$. For the denominators, we already know that $\left\vert \varphi_{n}\left(  \xi\right)\right\vert \geq1/2$ for $\left\vert \xi\right\vert \leq\delta$ and $n\geq N$. For the numerators, notice that for $k\in\mathbb{N}$,
\begin{align}
\left\vert \varphi_{n}^{(k)}(\xi)\right\vert  &  =|E[(iY_{n,1})^{k}e^{i\xi Y_{n,1}}]|\leq E[|Y_{n,1}|^{k}]=\int\left\vert \frac{x-b-\mu_{n}}{\sqrt{\psi^{\prime\prime}(\theta^*)}}\right\vert ^{k}dF_{n}^{(\theta^*)}(x)\nonumber\\
&  =\int\left\vert \frac{x-b-\mu_{n}}{\sqrt{\psi^{\prime\prime}(\theta^*)}}\right\vert ^{k}e^{\theta^*x-\psi_{n}(\theta^*)}d\tilde{F}_{u_{n}}\left(  x\right)  =\frac{E[\left.  |X-b-\mu_{n}|^{k}e^{\theta^*X}\right\vert X\leq u_{n}]}{\sqrt{\psi^{\prime\prime}(\theta^*)}^{k}E[\left.  e^{\theta^*X}\right\vert X\leq u_{n}]}.
\label{bound_of_varphi^(k)_n}%
\end{align}
Dominated convergence theorem shows that
\[
\lim_{n\rightarrow\infty}\frac{E[\left.  |X-b-\mu_{n}|^{k}e^{\theta^*X}\right\vert X\leq u_{n}]}{\sqrt{\psi^{\prime\prime}(\theta^*)}^{k}E[\left.  e^{\theta^*X}\right\vert X\leq u_{n}]}=\frac{E[|X-b|^{k}e^{\theta^*X}]}{\sqrt{\psi^{\prime\prime}(\theta^*)}^{k}E[e^{\theta^*X}]},
\]
so the sequence%
\[
\left\{  \frac{E[\left.  |X-b-\mu_{n}|^{k}e^{\theta^*X}\right\vert X\leq u_{n}]}{\sqrt{\psi^{\prime\prime}(\theta^*)}^{k}E[\left.  e^{\theta^*X}\right\vert X\leq u_{n}]},n\geq1\right\}
\]
is bounded. Therefore, for each $k$, (\ref{bound_of_varphi^(k)_n}) implies that $\varphi_{n}^{(k)}(\xi)$ is bounded for all $n$ and all $\xi\in\mathbb{R}$. Then the estimations for both denominators and numerators imply that $h_{n}^{\left(  4\right)  }(\xi)$ is bounded when $|\xi|\leq\delta$ and $n\geq N$. By the fourth-order Taylor expansion with the Lagrange remainder, we conclude that $\exists$ $\delta>0$ and $N\in\mathbb{N}$ s.t.
\begin{equation}
\left\vert h_{n}\left(  \xi\right)  -\frac{1}{6}i^{3}m_{3,n}\xi^{3}\right\vert=\left\vert \frac{1}{24}h_{n}^{\left(  4\right)  }\left(  \hat{\xi}\right)\xi^{4}\right\vert \leq\epsilon\left\vert \xi\right\vert ^{3}
\label{Taylor_approximation_of_h_n}%
\end{equation}
for $n\geq N$ and $|\xi|\leq\delta$. Besides, by (\ref{Taylor_approximation_of_h_n}) and the bound (\ref{bound_of_m_3n}) of $m_{3,n}$, we can also see when $\delta$ is chosen small enough (without loss of generality, assume our choice of $\delta$ is already small enough), the following estimations hold%
\[
|h_{n}(\xi)|\leq\frac{1}{4}\xi^{2},\text{ }\left\vert \frac{1}{6}i^{3}m_{3,n}\xi^{3}\right\vert \leq\frac{1}{4}\xi^{2}%
\]
for $n\geq N$ and $|\xi|\leq\delta$.

Applying these estimations, we can see when $n\geq N$ and $|\xi|\leq\delta\sqrt{n}$, the following hold
\[
\left\vert nh_{n}\left(  \frac{\xi}{\sqrt{n}}\right)  -\frac{m_{3,n}}{6\sqrt{n}}(i\xi)^{3}\right\vert =n\left\vert h_{n}\left(  \frac{\xi}{\sqrt{n}}\right)  -\frac{m_{3,n}}{6}\left(  i\frac{\xi}{\sqrt{n}}\right)^{3}\right\vert \leq n\epsilon\left\vert \frac{\xi}{\sqrt{n}}\right\vert^{3}=\frac{\epsilon}{\sqrt{n}}|\xi|^{3},
\]%
\[
\left\vert nh_{n}\left(  \frac{\xi}{\sqrt{n}}\right)  \right\vert \leq n\frac{1}{4}\left(  \frac{\xi}{\sqrt{n}}\right)  ^{2}=\frac{\xi^{2}}{4},
\]
and%
\[
\left\vert \frac{m_{3,n}}{6\sqrt{n}}(i\xi)^{3}\right\vert =n\left\vert\frac{m_{3,n}}{6}\left(  i\frac{\xi}{\sqrt{n}}\right)  ^{3}\right\vert \leq n\frac{1}{4}\left(  \frac{\xi}{\sqrt{n}}\right)  ^{2}=\frac{\xi^{2}}{4}.
\]
Then by (2.8) in Section XVI.2 of \cite{feller2008introduction}, the bound (\ref{bound_of_sigma_n}) of $\sigma_{n}^{2}$ and the bound (\ref{bound_of_m_3n}) of $m_{3,n}$, when $n\geq N$ and $\left\vert\xi\right\vert \leq\delta\sqrt{n}$, we have%
\begin{align*}
\frac{1}{|\xi|}\left\vert \varphi_{n}^{n}\left(  \frac{\xi}{\sqrt{n}}\right)-\gamma_{n}(\xi)\right\vert  &  =e^{-\frac{1}{2}\sigma_{n}^{2}\xi^{2}}\frac{1}{|\xi|}\left\vert \exp\left(  nh_{n}\left(  \frac{\xi}{\sqrt{n}}\right)\right)  -1-\frac{m_{3,n}}{6\sqrt{n}}(i\xi)^{3}\right\vert \\
&  \leq e^{-\frac{1}{2}\frac{3}{4}\xi^{2}}\frac{1}{|\xi|}\left(\frac{\epsilon}{\sqrt{n}}|\xi|^{3}+\frac{1}{2}\left\vert \frac{m_{3,n}}{6\sqrt{n}}(i\xi)^{3}\right\vert ^{2}\right)  e^{\frac{\xi^{2}}{4}}\\
&  =e^{-\frac{1}{8}\xi^{2}}\left(  \frac{\epsilon}{\sqrt{n}}|\xi|^{2}+\frac{1}{72n}\left\vert m_{3,n}\right\vert ^{2}|\xi|^{5}\right) \\
&  =e^{-\frac{1}{8}\xi^{2}}\left(  \frac{\epsilon}{\sqrt{n}}|\xi|^{2}+\frac{1}{72n}\left(  \left\vert \frac{\psi^{\left(  3\right)  }(\theta^*)}{\sqrt{\psi^{\prime\prime}(\theta^*)}^{3}}\right\vert +1\right)^{2}|\xi|^{5}\right) ,
\end{align*}
which proves (\ref{estimation_of_integrand}).

By now all the preparations have been finished. Let us prove (\ref{estimation_integral}), i.e.%
\[
\frac{1}{\pi}\int_{-a\sqrt{n}}^{a\sqrt{n}}\frac{1}{|\xi|}\left\vert\varphi_{n}^{n}\left(  \frac{\xi}{\sqrt{n}}\right)  -\gamma_{n}(\xi)\right\vert d\xi\leq\frac{100\epsilon}{\sqrt{n}}.
\]
for $n$ large enough. By Part I and II, we can choose $\delta>0$ and $N=N(\delta)\in\mathbb{N}$ s.t. (\ref{bound_away_from_1}) and (\ref{estimation_of_integrand}) hold. Without loss of generality, we can also assume that $N$ is chosen so large that when $n\geq N$%
\begin{equation}
\frac{1}{72\sqrt{n}}\left(  \left\vert \frac{\psi^{\left(  3\right)  }\left(\theta^*\right)  }{\left(  \sqrt{\psi^{\prime\prime}\left(\theta^*\right)  }\right)  ^{3}}\right\vert +1\right)  ^{2}\int_{\mathbb{R}}e^{-\frac{1}{8}\xi^{2}}\left\vert \xi\right\vert ^{5}d\xi<25\epsilon.
\label{estimation_of_O(1/sqrt(n))_term}%
\end{equation}
Then we divide the left hand side of (\ref{estimation_integral}) into two terms:%
\begin{align*}
&  \frac{1}{\pi}\int_{-a\sqrt{n}}^{a\sqrt{n}}\frac{1}{|\xi|}\left\vert\varphi_{n}^{n}\left(  \frac{\xi}{\sqrt{n}}\right)  -\gamma_{n}(\xi)\right\vert d\xi\\
&  =\frac{1}{\pi}\int_{-\delta\sqrt{n}}^{\delta\sqrt{n}}\frac{1}{|\xi|}\left\vert \varphi_{n}^{n}\left(  \frac{\xi}{\sqrt{n}}\right)  -\gamma_{n}(\xi)\right\vert d\xi+\frac{1}{\pi}\int_{\delta\sqrt{n}\leq|\xi|\leq a\sqrt{n}}\frac{1}{|\xi|}\left\vert \varphi_{n}^{n}\left(  \frac{\xi}{\sqrt{n}}\right)  -\gamma_{n}(\xi)\right\vert d\xi.
\end{align*}
For the first term, it follows by (\ref{estimation_of_integrand}) and (\ref{estimation_of_O(1/sqrt(n))_term}) that
\begin{align}
\frac{1}{\pi}\int_{-\delta\sqrt{n}}^{\delta\sqrt{n}}\frac{1}{|\xi|}\left\vert\varphi_{n}^{n}\left(  \frac{\xi}{\sqrt{n}}\right)  -\gamma_{n}(\xi)\right\vert d\xi &  \leq\frac{1}{\pi}\int_{-\delta\sqrt{n}}^{\delta\sqrt{n}}e^{-\frac{1}{8}\xi^{2}}\left(  \frac{\epsilon}{\sqrt{n}}|\xi|^{2}+\frac{1}{72n}\left(  \left\vert \frac{\psi^{\left(  3\right)  }(\theta^*)}{\sqrt{\psi^{\prime\prime}(\theta^*)}^{3}}\right\vert +1\right)^{2}|\xi|^{5}\right)  d\xi\nonumber\\
&  \leq\frac{1}{\pi}\int_{\mathbb{R}}e^{-\frac{1}{8}\xi^{2}}\left(  \frac{\epsilon}{\sqrt{n}}|\xi|^{2}+\frac{1}{72n}\left(  \left\vert \frac{\psi^{\left(  3\right)  }(\theta^*)}{\sqrt{\psi^{\prime\prime}(\theta^*)}^{3}}\right\vert +1\right)^{2}|\xi|^{5}\right)  d\xi\nonumber\\
&  \leq\frac{\epsilon}{\sqrt{n}}\frac{4}{\pi\sqrt{8\pi}}+\frac{25\epsilon}{\pi\sqrt{n}}<\frac{50\epsilon}{\sqrt{n}}. \label{estimation_part1}%
\end{align}
For the second term, it follows by (\ref{gamma_n}), (\ref{bound_of_sigma_n}), (\ref{bound_of_m_3n}) and (\ref{bound_away_from_1}) that%
\begin{align}
&  \frac{1}{\pi}\int_{\delta\sqrt{n}\leq|\xi|\leq a\sqrt{n}}\frac{1}{|\xi|}\left\vert \varphi_{n}^{n}\left(  \frac{\xi}{\sqrt{n}}\right)  -\gamma_{n}(\xi)\right\vert d\xi\nonumber\\
&  \leq\frac{1}{\pi}\int_{\delta\sqrt{n}\leq|\xi|\leq a\sqrt{n}}\frac{1}{\delta\sqrt{n}}\left(  q^{n}+e^{-\frac{3}{8}\xi^{2}}\left(  1+\frac{|\xi|^{3}}{6\sqrt{n}}\left(  \left\vert \frac{\psi^{\left(  3\right)}(\theta^*)}{\sqrt{\psi^{\prime\prime}(\theta^*)}^{3}}\right\vert+1\right)  \right)  \right)  d\xi\nonumber\\
&  \leq\frac{2}{\pi\delta\sqrt{n}}\left(  q^{n}+e^{-\frac{3}{8}\delta^{2}n}\left(  1+\frac{(a\sqrt{n})^{3}}{6\sqrt{n}}\left(  \left\vert \frac{\psi^{\left(  3\right)  }(\theta^*)}{\sqrt{\psi^{\prime\prime}(\theta^*)}^{3}}\right\vert +1\right)  \right)  \right)  (a-\delta)\sqrt{n}\nonumber\\
&  =\frac{2\left(  a-\delta\right)  }{\pi\delta}\left(  q^{n}+e^{-\frac{3}{8}\delta^{2}n}\left(  1+\frac{a^{3}n}{6}\left(  \left\vert \frac{\psi^{\left(  3\right)  }(\theta^*)}{\sqrt{\psi^{\prime\prime}(\theta^*)}^{3}}\right\vert +1\right)  \right)  \right)  <\frac{50\epsilon}{\sqrt{n}} \label{estimation_part2}%
\end{align}
when $n$ is large enough. Combining the estimations (\ref{estimation_part1}) and (\ref{estimation_part2}), we have%
\[
\frac{1}{\pi}\int_{-a\sqrt{n}}^{a\sqrt{n}}\frac{1}{|\xi|}\left\vert\varphi_{n}^{n}\left(  \frac{\xi}{\sqrt{n}}\right)  -\gamma_{n}(\xi)\right\vert d\xi\leq\frac{100\epsilon}{\sqrt{n}}.
\]
So from (\ref{difference_of_Hn_and_Gn}), we can see%
\[
\left\vert H_{n}\left(  x\right)  -G_{n}\left(  x\right)  \right\vert\leq\frac{101\epsilon}{\sqrt{n}}.
\]
Since $\epsilon$ is chosen arbitrarily, we finally get%
\begin{align*}
&  \lim_{n\rightarrow\infty}\left(  \sqrt{n}\sup_{x}\left\vert H_{n}\left(x\right)  -\Phi\left(  \frac{x}{\sigma_{n}}\right)  -\frac{m_{3,n}}{6\sigma_{n}^{3}\sqrt{n}}\left(  1-\left(  \frac{x}{\sigma_{n}}\right)^{2}\right)  \phi\left(  \frac{x}{\sigma_{n}}\right)  \right\vert \right) \\
&  =\lim_{n\rightarrow\infty}\sqrt{n}\left\vert H_{n}\left(  x\right)-G_{n}\left(  x\right)  \right\vert =0.
\end{align*}

(2): This proof is a modified version of the proof of Theorem 1 in Section 43 of \cite{gnedenko1968limit} which proves the Berry-Ess\'{e}en expansion for the i.i.d. lattice distribution. We will prove it for triangular arrays (in the special case of truncated distributions with varying truncation levels).

Recall that $H_{n}$ is the distribution function of $W_{n}=\sum_{m=1}^{n}Y_{n,m}/\sqrt{n}$, $Y_{n,m}=(X_{n,m}^{(\theta^*)}-b-\mu_{n})/\sqrt{\psi^{\prime\prime}(\theta^*)}$ and $X_{n,m}^{(\theta^*)}$ has the distribution $F_{n}^{(\theta^*)}(x)$ defined by $dF_{n}^{(\theta^*)}(x)=e^{\theta^*x-\psi_{n}(\theta^*)}d\tilde{F}_{u_{n}}(x)$. Therefore, $X_{n,m}^{(\theta^*)}$ has the same support as $X|X\leq u_{n}$ (i.e., the distribution of $X$ conditional on $X\leq u_{n}$) which also has a lattice distribution. First, we need to show that $h$ is also the span of $X|X\leq u_{n}$ when $n$ is large enough. Suppose the support of $X$ is $\{c_{m},m\in\mathcal{M}\}$ with $c_m<c_{m+1}$ where $\mathcal{M}$ is a consecutive subset of $\mathbb{Z}$. Consider the distances between two adjacent points, i.e. $y_{m}=c_{m}-c_{m-1}$. By the definition of the span, $h$ is the largest number such that $y_{m}/h$ are integer for all $m$. Now suppose that $c_{m_{n}}$ is the largest number satisfying $c_{m_{n}}\leq u_{n}$. Then the support of $X|X\leq u_{n}$ is $\{c_{m},m\leq m_{n},m\in\mathcal{M}\}$. So the span of $X|X\leq u_{n}$ is the largest number $h_{n}$ such that $y_{m}/h_{n}$ are integer for all $m\leq m_{n}$. Clearly $h_{n}\geq h$ so we can write $h_{n}$ as $h_{n}=r_{n}h$ for some $r_{n}\geq1$. Note that $r_{n}$ must be rational otherwise $y_{m}/h_{n}=(y_{m}/h)/r_{n}$ will be irrational, which contradicts that $y_{m}/h_{n}$ are integer for all $m\leq m_{n}$. Further, we claim that $r_{n}$ must be integer. Otherwise, we can assume that $r_{n}=q_{1,n}/q_{2,n}$ for some $q_{1,n}\in\mathbb{N}$, $q_{2,n}\in\mathbb{N}$ with $(q_{1,n},q_{2,n})=1$ and $q_{2,n}>1$. Thus $y_{m}/h_{n}=q_{2,n}(y_{m}/h)/q_{1,n}$ are all integer for all $m\leq m_{n}$. By $(q_{1,n},q_{2,n})=1$, we know that $q_{1,n}|(y_{m}/h)$, i.e. $y_{m}/(q_{1,n}h)$ are all integer for all $m\leq m_{n}$. But $q_{1,n}h>h_{n}=p_{n}h$, which contradicts the definition of the span $h_{n}$. So $r_{n}$ must be integer. Besides, since $u_{n}\uparrow\infty$ and $h$ is the largest number such that $y_{m}/h$ are integer for all $m$, we must have $r_{1}\geq r_{2}\geq r_{3}\geq\cdots$ and $\lim_{n\rightarrow\infty}r_{n}=1$. Thus, when $n$ is large enough, $r_{n}\equiv1$, which means $h$ is the span of $X|X\leq u_{n}$ for $n$ large enough.

Without loss of generality, we assume that $h$ is the span of $X|X\leq u_{n}$ for all $n$, and thus the span of $X_{n,m}^{(\theta^*)}$. Since $b$ is in the lattice of $X$ because of $0<P(X=b)<1$, we can see $Y_{n,m}$ has the lattice distribution taking values in $\{-\mu_{n}/\sqrt{\psi^{\prime\prime}(\theta^*)}+mh/\sqrt{\psi^{\prime\prime}(\theta^*)},m\in\mathbb{Z}\}$ with the span $h/\sqrt{\psi^{\prime\prime}(\theta^*)}$. Besides, just as the non-lattice case, we assume that (\ref{bound_of_sigma_n}) and (\ref{bound_of_m_3n}) hold for all $n\in\mathbb{N}$.

Recall that $S(x)=-x+1/2$ is a function with period $1$. By Fourier expansion, it can be write as $S(x)=\sum_{j=1}^{\infty}\sin(2\pi jx)/j\pi$ for non-integer values. Then, the characteristic function (Fourier transform) of
\begin{equation}
D_{n}(x):=\frac{h}{\sqrt{\psi^{\prime\prime}(\theta^*)}\sqrt{n}\sigma_{n}}\phi\left(  \frac{x}{\sigma_{n}}\right)  S\left(  \frac{(x\sqrt{\psi^{\prime\prime}(\theta^*)}+\mu_{n}\sqrt{n})}{h}\sqrt{n}\right)  \label{D_n}%
\end{equation}
is given by%
\begin{align}
d_{n}(\xi)  &  =\int e^{i\xi x}dD_{n}(x)=-i\xi\int e^{i\xi x}D_{n}(x)dx\nonumber\\
&  =-i\xi\int\left[  e^{i\xi x}\frac{h}{\sqrt{\psi^{\prime\prime}(\theta^*)}\sqrt{n}\sigma_{n}}\phi\left(  \frac{x}{\sigma_{n}}\right)\sum_{j=1}^{\infty}\frac{\sin\left(  \tau j\sqrt{n}\left(  x+\frac{\mu_{n}\sqrt{n}}{\sqrt{\psi^{\prime\prime}(\theta^*)}}\right)  \right)}{\pi j}\right]  dx\nonumber\\
&  =\frac{-2i\xi}{\tau\sqrt{n}\sigma_{n}}\sum_{j=1}^{\infty}\frac{1}{j}\int\left[  e^{i\xi x}\phi\left(  \frac{x}{\sigma_{n}}\right)  \sin\left(\tau j\sqrt{n}\left(  x+\frac{\mu_{n}\sqrt{n}}{\sqrt{\psi^{\prime\prime}(\theta^*)}}\right)  \right)  \right]  dx\nonumber\\
&  =\frac{-\xi}{\tau\sqrt{n}\sigma_{n}}\sum_{j\in\mathbb{Z},j\neq0}\frac{1}{j}\int\left[  \exp\left(  i\left(  \xi x+\tau j\sqrt{n}\left(  x+\frac{\mu_{n}\sqrt{n}}{\sqrt{\psi^{\prime\prime}(\theta^*)}}\right)  \right)  \right)  \phi\left(  \frac{x}{\sigma_{n}}\right)\right]  dx\nonumber\\
&  =\frac{-\xi}{\tau\sqrt{n}}\sum_{j\in\mathbb{Z},j\neq0}\frac{1}{j}\exp\left(  \frac{i\tau jn\mu_{n}}{\sqrt{\psi^{\prime\prime}(\theta^*)}}\right)  \exp\left(  -\frac{1}{2}\sigma_{n}^{2}(\xi+\tau j\sqrt{n})^{2}\right)  , \label{d_n}%
\end{align}
where $\tau=2\pi\sqrt{\psi^{\prime\prime}(\theta^*)}/h$ and the third equality can be justified by the $L^p$ convergence of Fourier series in any period and the fast convergence rate of $\phi(\cdot)$:
\[
\sum_{j=-\infty}^{\infty}\phi(ja_{1}+a_{2})<\infty,\forall a_{1}\in\mathbb{R},a_{2}\in\mathbb{R},a_{1}\neq0.
\]

Define%
\[
G_{n}(x):=\Phi\left(  \frac{x}{\sigma_{n}}\right)  +\frac{m_{3,n}}{6\sigma_{n}^{3}\sqrt{n}}\left(  1-\left(  \frac{x}{\sigma_{n}}\right)  ^{2}\right)\phi\left(  \frac{x}{\sigma_{n}}\right)  +\frac{h}{\sqrt{\psi^{\prime\prime}(\theta^*)}\sqrt{n}\sigma_{n}}\phi\left(  \frac{x}{\sigma_{n}}\right)S\left(  \frac{(x\sqrt{\psi^{\prime\prime}(\theta^*)}+\mu_{n}\sqrt{n})}{h}\sqrt{n}\right)  .
\]
Recall that $H_{n}$ is the distribution function of $W_{n}=\sum_{m=1}^{n}Y_{n,m}/\sqrt{n}$, where $Y_{n,m}$ has the lattice distribution taking values in $\{-\mu_{n}/\sqrt{\psi^{\prime\prime}(\theta^*)}+mh/\sqrt{\psi^{\prime\prime}(\theta^*)},m\in\mathbb{Z}\}$. We can see both $H_{n}(x)$ and $G_{n}(x)$ have discontinuities only at $\{-\mu_{n}\sqrt{n}/\sqrt{\psi^{\prime\prime}(\theta^*)}+mh/\sqrt {n\psi^{\prime\prime}(\theta^*)},m\in\mathbb{Z}\}$. At the continuous points, $G_{n}(x)$ is also differentiable. By (\ref{bound_of_sigma_n}) and (\ref{bound_of_m_3n}), the derivative can be bounded by
\begin{align*}
|G_{n}^{\prime}(x)|  &  =\left\vert \frac{1}{\sigma_{n}}\phi\left(  \frac{x}{\sigma_{n}}\right)  +\frac{m_{3,n}}{6\sigma_{n}^{3}\sqrt{n}}\left[\phi\left(  \frac{x}{\sigma_{n}}\right)  \left(  \frac{1}{\sigma_{n}}\left(\frac{x}{\sigma_{n}}\right)  ^{3}-\frac{3}{\sigma_{n}}\frac{x}{\sigma_{n}}\right)  \right]  \right. \\
&  -\frac{h}{\sqrt{\psi^{\prime\prime}(\theta^*)}\sqrt{n}\sigma_{n}^{2}}\frac{x}{\sigma_{n}}\phi\left(  \frac{x}{\sigma_{n}}\right)  S\left(\frac{(x\sqrt{\psi^{\prime\prime}(\theta^*)}+\mu_{n}\sqrt{n})}{h}\sqrt{n}\right) \\
&  \left.  +\frac{1}{\sigma_{n}}\phi\left(  \frac{x}{\sigma_{n}}\right)S^{\prime}\left(  \frac{(x\sqrt{\psi^{\prime\prime}(\theta^*)}+\mu_{n}\sqrt{n})}{h}\sqrt{n}\right)  \right\vert \\
&  \leq A,
\end{align*}
for any $n\in\mathbb{N}$ and all continuity points of $G_{n}(x)$, where%
\begin{align*}
A  &  =\frac{2}{\sqrt{6\pi}}+\frac{4}{9\sqrt{3}}\left(  \left\vert \frac{\psi^{\left(  3\right)  }(\theta^*)}{\sqrt{\psi^{\prime\prime}(\theta^*)}^{3}}\right\vert +1\right)  \left[  \frac{2}{\sqrt{3}}\sup_{x}(|x|^{3}\phi(x))+2\sqrt{3}\sup_{x}(|x|\phi(x))\right] \\
&  +\frac{2h}{3\sqrt{\psi^{\prime\prime}(\theta^*)}}\sup_{x}(|x|\phi(x))+\frac{2}{\sqrt{6\pi}}.
\end{align*}
Fix $\epsilon>0$. We now apply Theorem 2 in Section 39 of \cite{gnedenko1968limit}, putting there $F(x)=H_{n}(x)$, $G(x)=G_{n}(x)$, $l=h/\sqrt{n\psi^{\prime\prime}(\theta^*)}$ and $T=(2K+1)\tau\sqrt{n}/2$ for $K\in\mathbb{N}$ large enough s.t. $Tl=(2K+1)\tau h/(2\sqrt{\psi^{\prime\prime}(\theta^*)})\geq c_{2}(2)$ and $2c_{1}(2)A/((2K+1)\tau)<\epsilon$. Condition 2 in this theorem can be verified by Theorem 6 in Section 3, chapter VI of \cite{petrov2012sums}, which holds for any fixed $n$. Then we can get%
\begin{equation}
|H_{n}(x)-G_{n}(x)|\leq\frac{1}{\pi}\int_{-\frac{(2K+1)\tau}{2}\sqrt{n}}^{\frac{(2K+1)\tau}{2}\sqrt{n}}\frac{1}{|\xi|}\left\vert \varphi_{n}^{n}\left(  \frac{\xi}{\sqrt{n}}\right)  -g_{n}(\xi)\right\vert d\xi+\frac{\epsilon}{\sqrt{n}}, \label{difference_of_Hn_and_Gn_lattice}%
\end{equation}
where $\varphi_{n}(\xi)=Ee^{i\xi Y_{n,1}}$ is the characteristic function of $Y_{n,1}$ and $g_{n}(\xi)=\gamma_{n}(\xi)+d_{n}(\xi)$ is the characteristic function of $G_{n}(x)$ with $\gamma_{n}(\xi)$ and $d_{n}(\xi)$ defined in (\ref{gamma_n}) and (\ref{d_n}), respectively. Next, we will show
\begin{equation}
\frac{1}{\pi}\int_{-\frac{(2K+1)\tau}{2}\sqrt{n}}^{\frac{(2K+1)\tau}{2}\sqrt{n}}\frac{1}{|\xi|}\left\vert \varphi_{n}^{n}\left(  \frac{\xi}{\sqrt{n}}\right)  -g_{n}(\xi)\right\vert d\xi\leq\frac{100\epsilon}{\sqrt{n}}
\label{estimation_integral_lattice}%
\end{equation}
for $n$ large enough. Then, since $\epsilon$ is arbitrary, we will get the desired conclusion, i.e.,
\[
\sup_{x\in\mathbb{R}}\sqrt{n}|H_{n}(x)-G_{n}(x)|\rightarrow0.
\]
The proof of (\ref{estimation_integral_lattice}) will be divided into three parts based on the division of the integral interval.

Part I: We notice that the estimation in Part II of the non-lattice case also holds in the lattice case. So $\exists$ $\delta>0$ and $N\in\mathbb{N}$ s.t. the (\ref{estimation_part1}) holds for $n>N$. Without loss of generality, we can also assume $\delta\leq\tau/2$. Thus, we have $|\xi+\tau j\sqrt{n}|\geq|\tau j\sqrt{n}|-|\xi|\geq(|j|-1/2)\tau\sqrt{n}$ for $|\xi|\leq\delta\sqrt{n}$. Combining the bound (\ref{bound_of_sigma_n}) for $\sigma_{n}^{2}$, we have the following estimation for $|d_{n}(\xi)/\xi|$ when $|\xi|\leq\delta\sqrt{n}$
\begin{align}
\left\vert \frac{d_{n}(\xi)}{\xi}\right\vert  &  =\left\vert \frac{-1}{\tau\sqrt{n}}\sum_{j\in\mathbb{Z},j\neq0}\frac{1}{j}\exp\left(  \frac{i\tau jn\mu_{n}}{\sqrt{\psi^{\prime\prime}(\theta^*)}}\right)  \exp\left(  -\frac{1}{2}\sigma_{n}^{2}(\xi+\tau j\sqrt{n})^{2}\right)  \right\vert \leq\frac{1}{\tau\sqrt{n}}\sum_{j\in\mathbb{Z},j\neq0}\exp\left(  -\frac{3}{8}\frac{(2|j|-1)^{2}}{4}\tau^{2}n\right)\nonumber\\
&  \leq\frac{2}{\tau\sqrt{n}}\sum_{j=1}^{\infty}\exp\left(  -\frac{3}{8}\frac{j}{4}\tau^{2}n\right)  =\frac{2\exp\left(  -\frac{3}{32}\tau^{2}n\right)  }{\tau\sqrt{n}\left(  1-\exp\left(  -\frac{3}{32}\tau^{2}n\right)  \right)  }. \label{estimation_of_d_n/xi}%
\end{align}
Thus, together with (\ref{estimation_part1}), we have%
\begin{align}
\frac{1}{\pi}\int_{-\delta\sqrt{n}}^{\delta\sqrt{n}}\frac{1}{|\xi|}\left\vert\varphi_{n}^{n}\left(  \frac{\xi}{\sqrt{n}}\right)  -g_{n}(\xi)\right\vert d\xi &  \leq\frac{1}{\pi}\int_{-\delta\sqrt{n}}^{\delta\sqrt{n}}\frac{1}{|\xi|}\left\vert \varphi_{n}^{n}\left(  \frac{\xi}{\sqrt{n}}\right)-\gamma_{n}(\xi)\right\vert d\xi+\frac{1}{\pi}\int_{-\delta\sqrt{n}}^{\delta\sqrt{n}}\frac{\left\vert d_{n}(\xi)\right\vert }{|\xi|}d\xi\nonumber\\
&  \leq\frac{50\epsilon}{\sqrt{n}}+\frac{1}{\pi}\frac{2\exp\left(  -\frac{3}{32}\tau^{2}n\right)  }{\tau\sqrt{n}\left(  1-\exp\left(  -\frac{3}{32}\tau^{2}n\right)  \right)  }2\delta\sqrt{n}<\frac{60\epsilon}{\sqrt{n}},
\label{lattice_result_part1}%
\end{align}
when $n$ is large enough.

Part II: We notice that the uniform convergence (\ref{uniform_convergence}) also holds in the lattice case, where $\varphi_{\infty}(\xi)$ is the characteristic function of a lattice random variable with the span $h/\sqrt{\psi^{\prime\prime}(\theta^*)}=2\pi/\tau$. By Lemma 4 in Section XV.1 of \cite{feller2008introduction}, we can get $\sup_{\delta\leq|\xi|\leq\tau/2}|\varphi_{\infty}(\xi)|<1$, where $\delta$ is chosen in Part I. Then by the uniform convergence (\ref{uniform_convergence}), $\exists$ $N\in\mathbb{N}$, s.t.
\[
\left\vert \varphi_{n}(\xi)-e^{-i\xi\frac{b+\mu_{n}}{\sqrt{\psi^{\prime\prime}(\theta^*)}}}\varphi_{\infty}(\xi)\right\vert <\frac{1-\sup_{\delta\leq|\xi|\leq\tau/2}|\varphi_{\infty}(\xi)|}{2}%
\]
for any $n\geq N$ and $\xi\in\mathbb{R}$, which implies%
\[
\sup_{n\geq N,\delta\leq|\xi|\leq\tau/2}|\varphi_{n}(\xi)|\leq q:=\sup_{\delta\leq|\xi|\leq\tau/2}|\varphi_{\infty}(\xi)|+\frac{1-\sup_{\delta\leq|\xi|\leq\tau/2}|\varphi_{\infty}(\xi)|}{2}=\frac{1+\sup_{\delta\leq|\xi|\leq\tau/2}|\varphi_{\infty}(\xi)|}{2}<1.
\]
Note that when $\delta\sqrt{n}\leq|\xi|\leq\tau\sqrt{n}/2$, the estimation (\ref{estimation_of_d_n/xi}) still holds. Finally, recall that the definition of $\gamma_{n}$ in (\ref{gamma_n}), the bound (\ref{bound_of_sigma_n}) of $\sigma_{n}^{2}$, and the bound (\ref{bound_of_m_3n}) of $m_{3,n}$. It follows that%
\begin{align}
&  \frac{1}{\pi}\int_{\delta\sqrt{n}\leq|\xi|\leq\tau\sqrt{n}/2}\frac{1}{|\xi|}\left\vert \varphi_{n}^{n}\left(  \frac{\xi}{\sqrt{n}}\right)-g_{n}(\xi)\right\vert d\xi\nonumber\\
&  \frac{1}{\pi}\int_{\delta\sqrt{n}\leq|\xi|\leq\tau\sqrt{n}/2}\left[  \frac{1}{|\xi|}\left\vert \varphi_{n}^{n}\left(  \frac{\xi}{\sqrt{n}}\right)\right\vert +\frac{1}{|\xi|}|\gamma_{n}(\xi)|+\frac{1}{|\xi|}|d_{n}(\xi)|\right]  d\xi \nonumber\\
&  \leq\frac{1}{\pi}\int_{\delta\sqrt{n}\leq|\xi|\leq\tau\sqrt{n}/2}\left[\frac{1}{\delta\sqrt{n}}\left(  q^{n}+e^{-\frac{3}{8}\xi^{2}}\left(1+\frac{|\xi|^{3}}{6\sqrt{n}}\left(  \left\vert \frac{\psi^{\left(  3\right)}(\theta^*)}{\sqrt{\psi^{\prime\prime}(\theta^*)}^{3}}\right\vert+1\right)  \right)  \right)  +\frac{2\exp\left(  -\frac{3}{32}\tau^{2}n\right)  }{\tau\sqrt{n}\left(  1-\exp\left(  -\frac{3}{32}\tau^{2}n\right)  \right)  }\right]  d\xi\nonumber\\
&  \leq\frac{1}{\pi}\int_{\delta\sqrt{n}\leq|\xi|\leq\tau\sqrt{n}/2}\left[\frac{1}{\delta\sqrt{n}}\left(  q^{n}+e^{-\frac{3}{8}\delta^{2}n}\left(1+\frac{\tau^{3}n}{48}\left(  \left\vert \frac{\psi^{\left(  3\right)}(\theta^*)}{\sqrt{\psi^{\prime\prime}(\theta^*)}^{3}}\right\vert+1\right)  \right)  \right)  +\frac{2\exp\left(  -\frac{3}{32}\tau^{2}n\right)  }{\tau\sqrt{n}\left(  1-\exp\left(  -\frac{3}{32}\tau^{2}n\right)  \right)  }\right]  d\xi\nonumber\\
&  \leq\frac{2\sqrt{n}}{\pi}\left(  \frac{\tau}{2}-\delta\right)  \left[\frac{1}{\delta\sqrt{n}}\left(  q^{n}+e^{-\frac{3}{8}\delta^{2}n}\left(1+\frac{\tau^{3}n}{48}\left(  \left\vert \frac{\psi^{\left(  3\right)}(\theta^*)}{\sqrt{\psi^{\prime\prime}(\theta^*)}^{3}}\right\vert+1\right)  \right)  \right)  +\frac{2\exp\left(  -\frac{3}{32}\tau^{2}n\right)  }{\tau\sqrt{n}\left(  1-\exp\left(  -\frac{3}{32}\tau^{2}n\right)  \right)  }\right] \nonumber\\
&  <\frac{20\epsilon}{\sqrt{n}} \label{lattice_result_part2}%
\end{align}
when $n$ is large enough.

Part III: The last part we need to estimate is
\begin{align*}
&  \frac{1}{\pi}\int_{\frac{\tau}{2}\sqrt{n}\leq|\xi|\leq\frac{(2K+1)\tau}{2}\sqrt{n}}\frac{1}{|\xi|}\left\vert \varphi_{n}^{n}\left(  \frac{\xi}{\sqrt{n}}\right)  -g_{n}(\xi)\right\vert d\xi\\
&  \leq\frac{1}{\pi}\int_{\frac{\tau}{2}\sqrt{n}\leq|\xi|\leq\frac{(2K+1)\tau}{2}\sqrt{n}}\frac{1}{|\xi|}\left\vert \varphi_{n}^{n}\left(  \frac{\xi}{\sqrt{n}}\right)  -d_{n}(\xi)\right\vert d\xi+\frac{1}{\pi}\int_{\frac{\tau}{2}\sqrt{n}\leq|\xi|\leq\frac{(2K+1)\tau}{2}\sqrt{n}}\frac{1}{|\xi|}\left\vert \gamma_{n}(\xi)\right\vert d\xi.
\end{align*}
Just as what we did in Part II, we can also show
\[
\frac{1}{\pi}\int_{\frac{\tau}{2}\sqrt{n}\leq|\xi|\leq\frac{(2K+1)\tau}{2}\sqrt{n}}\frac{1}{|\xi|}\left\vert \gamma_{n}(\xi)\right\vert d\xi=o\left(\frac{1}{\sqrt{n}}\right)  .
\]
So in this part, we aim at estimating
\begin{align*}
\frac{1}{\pi}\int_{\frac{\tau}{2}\sqrt{n}\leq|\xi|\leq\frac{(2K+1)\tau}{2}\sqrt{n}}\frac{1}{|\xi|}\left\vert \varphi_{n}^{n}\left(  \frac{\xi}{\sqrt{n}}\right)  -d_{n}(\xi)\right\vert d\xi & =\frac{1}{\pi}\sum_{k=1}^{K}\int_{\frac{(2k-1)\tau}{2}\sqrt{n}\leq|\xi|\leq\frac{(2k+1)\tau}{2}\sqrt{n}}\frac{1}{|\xi|}\left\vert \varphi_{n}^{n}\left(  \frac{\xi}{\sqrt{n}}\right)  -d_{n}(\xi)\right\vert d\xi\\
&  =\frac{1}{\pi}\sum_{k=1}^{K}\int_{\frac{(2k-1)\tau}{2}\leq|\xi|\leq\frac{(2k+1)\tau}{2}}\frac{1}{|\xi|}\left\vert \varphi_{n}^{n}(\xi)-d_{n}(\sqrt{n}\xi)\right\vert d\xi.
\end{align*}
For simplicity, we only show how to estimate%
\[
\frac{1}{\pi}\sum_{k=1}^{K}\int_{\frac{(2k-1)\tau}{2}\leq\xi\leq\frac{(2k+1)\tau}{2}}\frac{1}{|\xi|}\left\vert \varphi_{n}^{n}(\xi)-d_{n}(\sqrt{n}\xi)\right\vert d\xi.
\]
The other side
\[
\frac{1}{\pi}\sum_{k=1}^{K}\int_{\frac{(2k-1)\tau}{2}\leq-\xi\leq\frac{(2k+1)\tau}{2}}\frac{1}{|\xi|}\left\vert \varphi_{n}^{n}(\xi)-d_{n}(\sqrt{n}\xi)\right\vert d\xi
\]
can be estimated by the same method.

Put%
\[
I_{k}=\frac{1}{\pi}\int_{\frac{(2k-1)\tau}{2}\leq\xi\leq\frac{(2k+1)\tau}{2}}\frac{1}{|\xi|}\left\vert \varphi_{n}^{n}(\xi)-d_{n}(\sqrt{n}\xi)\right\vert d\xi.
\]
If we can show%
\begin{equation}
I_{k}=o\left(  \frac{1}{\sqrt{n}}\right)  \label{I_k=o(1/sqrt(n))}%
\end{equation}
for each $k$, we will have%
\[
\frac{1}{\pi}\int_{\frac{\tau}{2}\sqrt{n}\leq|\xi|\leq\frac{(2K+1)\tau}{2}\sqrt{n}}\frac{1}{|\xi|}\left\vert \varphi_{n}^{n}\left(  \frac{\xi}{\sqrt{n}}\right)  -g_{n}(\xi)\right\vert d\xi=o\left(  \frac{1}{\sqrt{n}}\right)  .
\]
We make the change of variables $\xi=z+k\tau$ for $I_{k}$. Here we recall that $\tau=2\pi\sqrt{\psi^{\prime\prime}(\theta^*)}/h$ and%
\[
\varphi_{n}(\xi)=Ee^{i\xi Y_{n,1}}=\sum_{m=-\infty}^{\infty}\exp\left(i\xi\left(  -\frac{\mu_{n}}{\sqrt{\psi^{\prime\prime}(\theta^*)}}+\frac{mh}{\sqrt{\psi^{\prime\prime}(\theta^*)}}\right)  \right)P\left(  Y_{n,1}=-\frac{\mu_{n}}{\sqrt{\psi^{\prime\prime}(\theta^*)}}+\frac{mh}{\sqrt{\psi^{\prime\prime}(\theta^*)}}\right)  ,
\]
and consequently%
\[
\varphi_{n}^{n}(z+k\tau)=\exp\left(  -\frac{ik\tau n\mu_{n}}{\sqrt{\psi^{\prime\prime}(\theta^*)}}\right)  \varphi_{n}^{n}(z).
\]
Therefore,
\begin{align*}
 I_{k}& =\frac{1}{\pi}\int_{-\frac{\tau}{2}}^{\frac{\tau}{2}}\frac{1}{|z+k\tau|}\left\vert \exp\left(  -\frac{ik\tau n\mu_{n}}{\sqrt{\psi^{\prime\prime}(\theta^*)}}\right)  \varphi_{n}^{n}(z)-d_{n}(\sqrt{n}(z+k\tau))\right\vert dz\\
&  =\frac{1}{\pi}\int_{-\frac{\tau}{2}}^{\frac{\tau}{2}}\frac{1}{|z+k\tau|}\left\vert \exp\left(  -\frac{ik\tau n\mu_{n}}{\sqrt{\psi^{\prime\prime}(\theta^*)}}\right)  \varphi_{n}^{n}(z)\right.  \\
&  \left.  +\frac{z+k\tau}{\tau}\sum_{j\in\mathbb{Z},j\neq0}\frac{1}{j}\exp\left(  \frac{i\tau jn\mu_{n}}{\sqrt{\psi^{\prime\prime}(\theta^*)}}\right)  \exp\left(  -\frac{1}{2}\sigma_{n}^{2}n(z+k\tau+\tau j)^{2}\right)  \right\vert dz.
\end{align*}
For $j\neq-k$, by the bound (\ref{bound_of_sigma_n}) of $\sigma_{n}^{2}$, we have%
\[
\exp\left(  -\frac{1}{2}\sigma_{n}^{2}n(z+k\tau+\tau j)^{2}\right)  \leq\exp\left(  -\frac{1}{2}\frac{3}{4}n\tau^{2}\left(  |k+j|-\frac{1}{2}\right)^{2}\right)  \leq\exp\left(  -\frac{3\tau^{2}n}{32}(2|k+j|-1)\right)  ,
\]
which implies the following estimation:%
\begin{align*}
&  \left\vert \sum_{j\in\mathbb{Z},j\neq0,-k}\frac{1}{j}\exp\left(  \frac{i\tau jn\mu_{n}}{\sqrt{\psi^{\prime\prime}(\theta^*)}}\right)  \exp\left(  -\frac{1}{2}\sigma_{n}^{2}n(z+k\tau+\tau j)^{2}\right)  \right\vert \\
&  \leq\sum_{j\in\mathbb{Z},j\neq0,-k}\exp\left(  -\frac{3\tau^{2}n}{32}(2|k+j|-1)\right)  \leq2\sum_{j=1}^{\infty}\exp\left(  -\frac{3\tau^{2}n}{32}j\right)  =\frac{2\exp\left(  -\frac{3\tau^{2}n}{32}\right)  }{1-\exp\left(  -\frac{3\tau^{2}n}{32}\right)  }.
\end{align*}
Hence,
\begin{align}
I_{k} &  \leq\frac{1}{\pi}\int_{-\frac{\tau}{2}}^{\frac{\tau}{2}}\frac{1}{|z+k\tau|}\left\vert \exp\left(  -\frac{ik\tau n\mu_{n}}{\sqrt{\psi^{\prime\prime}(\theta^*)}}\right)  \varphi_{n}^{n}(z)-\frac{z+k\tau}{\tau k}\exp\left(  -\frac{ik\tau n\mu_{n}}{\sqrt{\psi^{\prime\prime}(\theta^*)}}\right)  \exp\left(  -\frac{1}{2}\sigma_{n}^{2}nz^{2}\right)  \right\vert dz\nonumber\\
&  +\frac{2\exp\left(  -\frac{3\tau^{2}n}{32}\right)  }{\pi\left(1-\exp\left(  -\frac{3\tau^{2}n}{32}\right)  \right)  }\nonumber\\
&  =\frac{1}{\pi}\int_{-\frac{\tau}{2}}^{\frac{\tau}{2}}\frac{1}{|z+k\tau|}\left\vert \varphi_{n}^{n}(z)-\exp\left(  -\frac{1}{2}\sigma_{n}^{2}nz^{2}\right)  -\frac{z}{\tau k}\exp\left(  -\frac{1}{2}\sigma_{n}^{2}nz^{2}\right)  \right\vert dz+\frac{2\exp\left(  -\frac{3\tau^{2}n}{32}\right)  }{\pi\left(  1-\exp\left(  -\frac{3\tau^{2}n}{32}\right)\right)  }\nonumber\\
&  \leq\frac{2}{k\tau\pi}\int_{-\frac{\tau}{2}}^{\frac{\tau}{2}}\left\vert\varphi_{n}^{n}(z)-\exp\left(  -\frac{1}{2}\sigma_{n}^{2}nz^{2}\right)\right\vert dz+\frac{2}{\pi\tau^{2}k^{2}}\int_{-\frac{\tau}{2}}^{\frac{\tau}{2}}|z|\exp\left(  -\frac{1}{2}\sigma_{n}^{2}nz^{2}\right)  dz+\frac{2\exp\left(  -\frac{3\tau^{2}n}{32}\right)  }{\pi\left(  1-\exp\left(-\frac{3\tau^{2}n}{32}\right)  \right)  }.\label{I_k}%
\end{align}
For the first term, by a change of variable $\xi=\sqrt{n}\sigma_{n}z$, we have%
\begin{equation}
\frac{2}{k\tau\pi}\int_{-\frac{\tau}{2}}^{\frac{\tau}{2}}\left\vert\varphi_{n}^{n}(z)-\exp\left(  -\frac{1}{2}\sigma_{n}^{2}nz^{2}\right)\right\vert dz=\frac{2}{k\tau\pi\sqrt{n}\sigma_{n}}\int_{-\frac{\sigma_{n}\tau}{2}\sqrt{n}}^{\frac{\sigma_{n}\tau}{2}\sqrt{n}}\left\vert \varphi_{n}^{n}\left(  \frac{\xi}{\sqrt{n}\sigma_{n}}\right)  -\exp\left(  -\frac{1}{2}\xi^{2}\right)  \right\vert d\xi.\label{target_lattice_part3}%
\end{equation}
By Theorem 2 in Section 40 of \cite{gnedenko1968limit}, we have that
\[
\left\vert \varphi_{n}^{n}\left(  \frac{\xi}{\sqrt{n}\sigma_{n}}\right)-\exp\left(  -\frac{1}{2}\xi^{2}\right)  \right\vert \leq\frac{7}{6}\frac{|\xi|^{3}E|Y_{n,1}|^{3}}{\sigma_{n}^{3}\sqrt{n}}\exp\left(  -\frac{\xi^{2}}{4}\right)
\]
for $|\xi|\leq\sigma_{n}^{3}\sqrt{n}/(5E|Y_{n,1}|^{3})$. Since by dominated convergence theorem, we have%
\[
E|Y_{n,1}|^{3}=E\left\vert \frac{X_{n,m}^{(\theta^*)}-b-\mu_{n}}{\sqrt{\psi^{\prime\prime}(\theta^*)}}\right\vert ^{3}\rightarrow\frac{E[|X-b|^{3}e^{\theta^*X}]}{\sqrt{\psi^{\prime\prime}(\theta^*)}^{3}E[e^{\theta^*X}]}.
\]
Thus, without generality, we can assume
\[
E|Y_{n,1}|^{3}\leq\frac{2E[|X-b|^{3}e^{\theta^*X}]}{\sqrt{\psi^{\prime\prime}(\theta^*)}^{3}E[e^{\theta^*X}]}%
\]
for all $n\in\mathbb{N}$. Besides, recall the bound (\ref{bound_of_sigma_n}) of $\sigma_{n}^{2}$. So we can see
\[
\frac{\sigma_{n}^{3}\sqrt{n}}{5E|Y_{n,1}|^{3}}\geq\frac{3}{40}\frac{\sigma_{n}\sqrt{n}\sqrt{\psi^{\prime\prime}(\theta^*)}^{3}E[e^{\theta^*X}]}{E[|X-b|^{3}e^{\theta^*X}]}.
\]
Thus, when
\[
|\xi|\leq\frac{3}{40}\frac{\sigma_{n}\sqrt{n}\sqrt{\psi^{\prime\prime}(\theta^*)}^{3}E[e^{\theta^*X}]}{E[|X-b|^{3}e^{\theta^*X}]},
\]
we have%
\begin{equation}
\left\vert \varphi_{n}^{n}\left(  \frac{\xi}{\sqrt{n}\sigma_{n}}\right)-\exp\left(  -\frac{1}{2}\xi^{2}\right)  \right\vert \leq\frac{7}{6}\frac{|\xi|^{3}E|Y_{n,1}|^{3}}{\sigma_{n}^{3}\sqrt{n}}\exp\left(  -\frac{\xi^{2}}{4}\right)  \leq\frac{56|\xi|^{3}}{9\sqrt{3}\sqrt{n}}\frac{E[|X-b|^{3}e^{\theta^*X}]}{\sqrt{\psi^{\prime\prime}(\theta^*)}^{3}E[e^{\theta^*X}]}\exp\left(  -\frac{\xi^{2}}{4}\right)
.\label{estimation_integrand_lattice_part3}%
\end{equation}
If
\[
\frac{3}{40}\frac{\sigma_{n}\sqrt{n}\sqrt{\psi^{\prime\prime}(\theta^*)}^{3}E[e^{\theta^*X}]}{E[|X-b|^{3}e^{\theta^*X}]}>\frac{\sigma_{n}\tau}{2}\sqrt{n},
\]
we can directly estimate (\ref{target_lattice_part3}) by (\ref{estimation_integrand_lattice_part3}). Otherwise, we should divide (\ref{target_lattice_part3}) into%
\begin{align*}
&  \frac{2}{k\tau\pi\sqrt{n}\sigma_{n}}\int_{-\frac{\sigma_{n}\tau}{2}\sqrt{n}}^{\frac{\sigma_{n}\tau}{2}\sqrt{n}}\left\vert \varphi_{n}^{n}\left(\frac{\xi}{\sqrt{n}\sigma_{n}}\right)  -\exp\left(  -\frac{1}{2}\xi^{2}\right)  \right\vert d\xi\\
&  =\frac{2}{k\tau\pi\sqrt{n}\sigma_{n}}\int_{|\xi|\leq\frac{3}{40}\frac{\sigma_{n}\sqrt{n}\sqrt{\psi^{\prime\prime}(\theta^*)}^{3}E[e^{\theta^*X}]}{E[|X-b|^{3}e^{\theta^*X}]}}\left\vert\varphi_{n}^{n}\left(  \frac{\xi}{\sqrt{n}\sigma_{n}}\right)  -\exp\left(-\frac{1}{2}\xi^{2}\right)  \right\vert d\xi\\
&  +\frac{2}{k\tau\pi\sqrt{n}\sigma_{n}}\int_{\frac{3}{40}\frac{\sigma_{n}\sqrt{n}\sqrt{\psi^{\prime\prime}(\theta^*)}^{3}E[e^{\theta^*X}]}{E[|X-b|^{3}e^{\theta^*X}]}\leq|\xi|\leq\frac{\sigma_{n}\tau}{2}\sqrt{n}}\left\vert \varphi_{n}^{n}\left(  \frac{\xi}{\sqrt{n}\sigma_{n}}\right)  -\exp\left(  -\frac{1}{2}\xi^{2}\right)  \right\vert d\xi\\
&  =\frac{2}{k\tau\pi\sqrt{n}\sigma_{n}}\int_{|\xi|\leq\frac{3}{40}\frac{\sigma_{n}\sqrt{n}\sqrt{\psi^{\prime\prime}(\theta^*)}^{3}E[e^{\theta^*X}]}{E[|X-b|^{3}e^{\theta^*X}]}}\left\vert\varphi_{n}^{n}\left(  \frac{\xi}{\sqrt{n}\sigma_{n}}\right)  -\exp\left(-\frac{1}{2}\xi^{2}\right)  \right\vert d\xi\\
&  +\frac{2}{k\tau\pi}\int_{\frac{3}{40}\frac{\sqrt{\psi^{\prime\prime}(\theta^*)}^{3}E[e^{\theta^*X}]}{E[|X-b|^{3}e^{\theta^*X}]}\leq|\xi|\leq\frac{\tau}{2}}\left\vert \varphi_{n}^{n}\left(  \xi\right)-\exp\left(  -\frac{1}{2}\sigma_{n}^{2}n\xi^{2}\right)  \right\vert d\xi,
\end{align*}
and then estimate the first term by (\ref{estimation_integrand_lattice_part3}) and estimate the second term by the technique in Part II. Anyway, in both cases, we will always get the following estimation for (\ref{target_lattice_part3}):
\[
\frac{2}{k\tau\pi}\int_{-\frac{\tau}{2}}^{\frac{\tau}{2}}\left\vert\varphi_{n}^{n}(z)-\exp\left(  -\frac{1}{2}\sigma_{n}^{2}nz^{2}\right)\right\vert dz=O\left(  \frac{1}{n}\right)  .
\]
For the second term of (\ref{I_k}), we have%
\[
\frac{2}{\pi\tau^{2}k^{2}}\int_{-\frac{\tau}{2}}^{\frac{\tau}{2}}|z|\exp\left(  -\frac{1}{2}\sigma_{n}^{2}nz^{2}\right)  dz=\frac{4}{\pi\tau^{2}k^{2}}\int_{0}^{\frac{\tau}{2}}z\exp\left(  -\frac{1}{2}\sigma_{n}^{2}nz^{2}\right)  dz\leq\frac{8}{\pi\tau^{2}k^{2}\sigma_{n}^{2}n}=O\left(\frac{1}{n}\right)  .
\]
Overall, for (\ref{I_k}), we have the estimation%
\[
I_{k}=O\left(  \frac{1}{n}\right)  +O\left(  \frac{1}{n}\right)  +\frac{2\exp\left(  -\frac{3\tau^{2}n}{32}\right)  }{\pi\left(  1-\exp\left(-\frac{3\tau^{2}n}{32}\right)  \right)  }=o\left(  \frac{1}{\sqrt{n}}\right),
\]
i.e., (\ref{I_k=o(1/sqrt(n))}) holds, and consequently,
\begin{equation}
\frac{1}{\pi}\int_{\frac{\tau}{2}\sqrt{n}\leq|\xi|\leq\frac{(2K+1)\tau}{2}\sqrt{n}}\frac{1}{|\xi|}\left\vert \varphi_{n}^{n}\left(  \frac{\xi}{\sqrt{n}}\right)  -g_{n}(\xi)\right\vert d\xi=o\left(  \frac{1}{\sqrt{n}}\right)  <\frac{20\epsilon}{\sqrt{n}}\label{lattice_result_part3}%
\end{equation}
for $n$ large enough.

Plugging (\ref{lattice_result_part1}), (\ref{lattice_result_part2}) and (\ref{lattice_result_part3}) into (\ref{difference_of_Hn_and_Gn_lattice}), we get%
\[
|H_{n}(x)-G_{n}(x)|\leq\frac{1}{\pi}\int_{-\frac{(2K+1)\tau}{2}\sqrt{n}}^{\frac{(2K+1)\tau}{2}\sqrt{n}}\frac{1}{|\xi|}\left\vert \varphi_{n}^{n}\left(  \frac{\xi}{\sqrt{n}}\right)  -g_{n}(\xi)\right\vert d\xi+\frac{\epsilon}{\sqrt{n}}\leq\frac{101\epsilon}{\sqrt{n}},
\]
which means%
\[
\lim_{n\rightarrow\infty}\sqrt{n}\sup_{x\in\mathbb{R}}|H_{n}(x)-G_{n}(x)|=0
\]
by the arbitrary choice of $\epsilon$. $\square$
\end{proof}

\begin{proof}{Proof of Theorem \ref{light}:}
Here we only consider the non-lattice case. The lattice case can be proved by the same argument. By Lemma \ref{asymptotic_truncated}, we have%
\[
\lim_{n\rightarrow\infty}\theta^*\sqrt{\psi^{\prime\prime}(\theta^*)2\pi n}e^{\theta^*nb-n\psi(\theta^*)}p(\tilde{F}_{u_{n}})=1,
\]
i.e.,
\begin{equation}
p(\tilde{F}_{u_{n}})=\frac{e^{n\psi(\theta^*)-\theta^*nb}}{\theta^*\sqrt{\psi^{\prime\prime}(\theta^*)2\pi n}}(1+o(1)).
\label{equ_asymptotic_truncated}%
\end{equation}
Comparing the asymptotic properties (\ref{asymptotic_original_nonlattice}) for the original distribution $F$ and (\ref{equ_asymptotic_truncated}) for the truncated distributions $\tilde{F}_{u_{n}}$, we have%
\[
\frac{p(\tilde{F}_{u_{n}})-p(F)}{p(F)}=\frac{\frac{e^{n\psi(\theta^*)-\theta^*nb}}{\theta^*\sqrt{\psi^{\prime\prime}(\theta^*)2\pi n}}(1+o(1))-\frac{e^{n\psi(\theta^*)-\theta^*nb}}{\theta^*\sqrt{\psi^{\prime\prime}(\theta^*)2\pi n}}(1+o(1))}{\frac{e^{n\psi(\theta^*)-\theta^*nb}}{\theta^*\sqrt{\psi^{\prime\prime}(\theta^*)2\pi n}}(1+o(1))}=\frac{o\left(1\right)  }{1+o\left(  1\right)  }\rightarrow0,
\]
which means%
\[
p(\tilde{F}_{u_{n}})-p(F)=o(p(F)).
\]
$\square$
\end{proof}

\begin{proof}{Proof of Theorem \ref{light_p_tilde}:} We will use the same notations in the proof of Lemma \ref{asymptotic_truncated}. For the original distribution, we can get the following asymptotics by Theorem 3.7.4 in \cite{dembozeitouni1998}:%
\[
\lim_{n\rightarrow\infty}\theta^*\sqrt{\psi^{\prime\prime}(\theta^*)2\pi n}e^{\theta^*nb-n\psi(\theta^*)}\tilde{p}(F)=1
\]
for the non-lattice case, and%
\[
\lim_{n\rightarrow\infty}\theta^*\sqrt{\psi^{\prime\prime}(\theta^*)2\pi n}e^{\theta^*nb-n\psi(\theta^*)}\tilde{p}(F)=\frac{\theta^*h}{1-e^{-\theta^*h}}%
\]
for the lattice case.

For the truncated distribution, by the same argument in the proof of Lemma \ref{asymptotic_truncated}, we have%
\[
\theta^*\sqrt{\psi^{\prime\prime}(\theta^*)2\pi n}e^{\theta^*n(b+\mu_{n})-n\psi_{n}(\theta^*)}\tilde{p}(\tilde{F}_{u_{n}})=\sqrt{2\pi}\lambda_{n}\int_{[-\theta^*n\mu_{n},\infty)}\left(H_{n}\left(  \frac{x}{\lambda_{n}}\right)  -H_{n}(a_{n}-)\right)  e^{-x}dx,
\]
where $H_{n}(x-)=\lim_{y\uparrow x}H_{n}(y)$.\footnote{In (3.7.7) of \cite{dembozeitouni1998}, $F_n(0)$ should be $F_n(0-)$.} Contrast to (\ref{integration_by_parts}), here we obtain $H_{n}(a_{n}-)$ instead of $H_{n}(a_{n})$. This is because the integral interval is closed in the left. So we will obtain $H_{n}(a_{n}-)$ using integration by parts.

For the non-lattice case, we have%
\[
\lim_{n\rightarrow\infty}\left(  \sqrt{n}\sup_{x}\left\vert H_{n}(x)-\Phi\left(  \frac{x}{\sigma_{n}}\right)  -\frac{m_{3,n}}{6\sigma_{n}^{3}\sqrt{n}}\left(  1-\left(  \frac{x}{\sigma_{n}}\right)  ^{2}\right)\phi\left(  \frac{x}{\sigma_{n}}\right)  \right\vert \right)  =0.
\]
By taking the left limit, we can get%
\[
\lim_{n\rightarrow\infty}\left(  \sqrt{n}\sup_{x}\left\vert H_{n}(x-)-\Phi\left(  \frac{x}{\sigma_{n}}\right)  -\frac{m_{3,n}}{6\sigma_{n}^{3}\sqrt{n}}\left(  1-\left(  \frac{x}{\sigma_{n}}\right)  ^{2}\right)\phi\left(  \frac{x}{\sigma_{n}}\right)  \right\vert \right)  =0,
\]
which means $H_{n}(x)$ and $H_{n}(x-)$ admit the same expansion. So the same argument in the proof of Lemma \ref{asymptotic_truncated} can be applied here. Finally we will get%
\[
\lim_{n\rightarrow\infty}\theta^*\sqrt{\psi^{\prime\prime}(\theta^*)2\pi n}e^{\theta^*nb-n\psi(\theta^*)}\tilde{p}(\tilde{F}_{u_{n}})=1,
\]
i.e. the same asymptotic of $p(\tilde{F}_{u_{n}})$.

For the lattice case, we have%
\begin{align*}
&  \lim_{n\rightarrow\infty}\left(  \sqrt{n}\sup_{x}\left\vert H_{n}(x)-\Phi\left(  \frac{x}{\sigma_{n}}\right)  -\frac{m_{3,n}}{6\sigma_{n}^{3}\sqrt{n}}\left(  1-\left(  \frac{x}{\sigma_{n}}\right)  ^{2}\right)\phi\left(  \frac{x}{\sigma_{n}}\right)  \right.  \right.  \\
&  \left.  \left.  -\frac{h}{\sqrt{\psi^{\prime\prime}(\theta^*)}\sqrt{n}\sigma_{n}}\phi\left(  \frac{x}{\sigma_{n}}\right)  S\left(\frac{(x\sqrt{\psi^{\prime\prime}(\theta^*)}+\mu_{n}\sqrt{n})}{h}\sqrt{n}\right)  \right\vert \right)  =0.
\end{align*}
By taking the left limit, we can get%
\begin{align*}
&  \lim_{n\rightarrow\infty}\left(  \sqrt{n}\sup_{x}\left\vert H_{n}(x-)-\Phi\left(  \frac{x}{\sigma_{n}}\right)  -\frac{m_{3,n}}{6\sigma_{n}^{3}\sqrt{n}}\left(  1-\left(  \frac{x}{\sigma_{n}}\right)  ^{2}\right)\phi\left(  \frac{x}{\sigma_{n}}\right)  \right.  \right.  \\
&  \left.  \left.  -\frac{h}{\sqrt{\psi^{\prime\prime}(\theta^*)}\sqrt{n}\sigma_{n}}\phi\left(  \frac{x}{\sigma_{n}}\right)  S\left(\frac{(x\sqrt{\psi^{\prime\prime}(\theta^*)}+\mu_{n}\sqrt{n})}{h}\sqrt{n}-\right)  \right\vert \right)  =0,
\end{align*}
which means $H_{n}(x)$ and $H_{n}(x-)$ admit different expansions because of the discontinuity of $S(x)$. Paraphrasing the argument in the proof of Lemma \ref{asymptotic_truncated}, we can obtain%
\begin{align*}
\lim_{n\rightarrow\infty}\theta^*\sqrt{\psi^{\prime\prime}(\theta^*)2\pi n}e^{\theta^*nb-n\psi(\theta^*)}\tilde{p}(\tilde{F}_{u_{n}})  & =1+\sqrt{2\pi}h\theta^*\int_{0}^{\infty}\left[\phi\left(  0\right)  S\left(  \frac{x}{\theta^*h}\right)  -\phi\left(0\right)  S\left(  0-\right)  \right]  e^{-x}dx\\
& =1+h\theta^*\int_{0}^{\infty}\left[  S\left(  \frac{x}{\theta^*h}\right)  +\frac{1}{2}\right]  e^{-x}dx\\
& =1+h\theta^*\sum_{j=0}^{\infty}\int_{j\theta^*h}^{(j+1)\theta^*h}\left[  S\left(  \frac{x}{\theta^*h}\right)  +\frac{1}{2}\right]  e^{-x}dx\\
& =1+\sum_{j=0}^{\infty}e^{-j\theta^*h}\int_{0}^{\theta^*h}e^{-x}(\theta^*h-x)dx=\frac{\theta^*h}{1-e^{-\theta^*h}}.
\end{align*}

Finally, by the same argument in Theorem \ref{light}, in both cases, we will get%
\[
\tilde{p}(\tilde{F}_{u_{n}})-\tilde{p}(F)=o(\tilde{p}(F)).
\]
$\square$
\end{proof}




\section{Bias of Conditional Monte Carlo for Discrete Distributions.}\label{sec:append_bias_cond_MC}
Suppose $X_{1},X_{2},\ldots,X_{n}$ are $n$ i.i.d. discrete random variables with equal mass on each point of $\{x_{1},x_{2},\ldots,x_{N}\}$ ($X_i$'s represent the samples generated from the empirical distribution in our experiments). We want to estimate the rare event probability $P(S_{n}>x)$ where $S_{n}=X_{1}+\cdots+X_{n}$. We use the estimator $Z$ proposed in (1.3) of \cite{asmussen2006improved} which is unbiased for estimating $nP(S_{n}>x,M_{n} \text{ is uniquely attained, } M_{n}=X_{n})$, where $M_{n}=\max_{1\leq i\leq n}X_{n}$. Note that if $X_i$'s were continuously distributed, we have
\begin{equation}
P(S_{n}>x)=nP(S_{n}>x,M_{n} \text{ is uniquely attained, } M_{n}=X_{n}) \label{validity_CMC},
\end{equation}
i.e., $Z$ is also unbiased for the target probability. However, when $X_i$'s follow a discrete distribution, (\ref{validity_CMC}) no longer holds and there will be a small bias if we still use such unbiased estimator of $nP(S_{n}>x,M_{n} \text{ is uniquely attained, } M_{n}=X_{n})$ to estimate $P(S_{n}>x)$. In the following, we analyze the magnitude of that bias and conclude that the bias is negligible compared to the magnitude of the target probability $P(S_{n}>x)$.

Note that
\begin{align*}
P(S_{n} >x) & =P(S_{n}>x,M_{n}\text{ is uniquely attained})+P(S_{n}>x,M_{n}\text{ is not uniquely attained})\\
& =\sum_{i=1}^{n}P(S_{n}>x,M_{n}\text{ is uniquely attained, }M_{n}=X_{i})+P(S_{n}>x,M_{n}\text{ is not uniquely attained})\\
& =nP(S_{n}>x,M_{n}\text{ is uniquely attained, }M_{n}=X_{n})+P(S_{n}>x,M_{n}\text{ is not uniquely attained}).
\end{align*}
So the bias is given by%
\[
P(S_{n}>x)-nP(S_{n}>x,M_{n}\text{ is uniquely attained, }M_{n}=X_{n})=P(S_{n}>x,M_{n}\text{ is not uniquely attained}).
\]
To analyze the bias, we first note that%
\[
\text{bias}=P(S_{n}>x|M_{n}\text{ is not uniquely attained})P(M_{n}\text{ is not uniquely attained}).
\]
By routine combinatorics and calculus, we can get%
\begin{align*}
P(M_{n}\text{ is not uniquely attained})  & =1-P(M_{n}\text{ is uniquely attained})=1-\sum_{k=2}^{N}\binom{n}{1}\frac{1}{N}\left[  \frac{(k-1)}{N}\right]  ^{n-1}\\
& =1-\frac{n}{N^{n}}\sum_{k=1}^{N-1}k^{n-1}\leq1-\frac{n}{N^{n}}\sum_{k=1}^{N-1}\int_{k-1}^{k}x^{n-1}dx\\
& =1-\frac{n}{N^{n}}\int_{0}^{N-1}x^{n-1}dx\\
& =1-\frac{(N-1)^{n}}{N^{n}}=1-\left(  1-\frac{1}{N}\right)  ^{n}.
\end{align*}
So the bias is bounded by%
\[
0\leq \text{bias}\leq P(S_{n}>x|M_{n}\text{ is not uniquely attained})\left[1-\left(  1-\frac{1}{N}\right)  ^{n}\right]  .
\]
When $n\le100$ and $N\geq10^{6}$, we can get%
\[
0\leq \text{bias}\leq10^{-4}P(S_{n}>x|M_{n}\text{ is not uniquely attained}).
\]
If we are fine with the approximation that $P(S_{n}>x|M_{n}$ is not uniquely attained$)\approx P(S_{n}>x)$ when $N\ge 10^6$ and relatively small $n\le100$, then we can see the bias is pretty small compared to the true probability $P(S_{n}>x)$.

\end{APPENDICES}

\section*{Acknowledgments.}
We gratefully acknowledge support from the InnoHK initiative, the Government of the HKSAR, and Laboratory for AI-Powered Financial Technologies, as well as the Columbia SEAS Innovation Hub grant. A preliminary conference version of this work has appeared in \cite{huang2019impacts}.


\bibliographystyle{informs2014} 
\bibliography{citation.bib} 


\end{document}